\newtheorem{theorem}{Theorem}[section]
\theoremstyle{plain}
\newtheorem{case}{Case}
\numberwithin{subcase}{case}
\newtheorem{claim}{Claim}
\newtheorem{corollary}[theorem]{Corollary}
\newtheorem{lemma}[theorem]{Lemma}
\newtheorem{proposition}[theorem]{Proposition}
\numberwithin{equation}{section}
\long\def\symbolfootnote[#1]#2{\begingroup%
\def\thefootnote{\fnsymbol{footnote}}\footnote[#1]{#2}\endgroup}
\begin{document}

\author{Karl Heuer}

\symbolfootnote[0]{\textcopyright 2019. This manuscript version is made available under the CC-BY-NC-ND 4.0 license \url{http://creativecommons.org/licenses/by-nc-nd/4.0/}}

\title[]{A sufficient condition for Hamiltonicity in locally finite graphs}

\begin{abstract}
Using topological circles in the Freudenthal compactification of a graph as infinite cycles, we extend to locally finite graphs a result of Oberly and Sumner on the Hamiltonicity of finite graphs. This answers a question of Stein, and gives a sufficient condition for Hamiltonicity in locally finite graphs.
\end{abstract}

\maketitle

\section{Introduction}
Determining whether a finite graph is Hamiltonian is an active field in graph theory. The problem to decide whether a finite graph is Hamiltonian is difficult. This indicates that it should not be easy to find a necessary and sufficient condition for a graph to be Hamiltonian which can easily be checked. On the other hand, there are a lot of conditions that are either necessary or sufficient for Hamiltonicity. In this paper we consider a sufficient condition in terms of forbidden subgraphs, due to Oberly and Sumner~\cite{ObSu}.

In order to state their theorem, we need two definitions. We call a graph \textit{locally connected} if the neighbourhood of each vertex induces a connected subgraph. A graph is called \textit{claw-free} if it does not contain the graph $K_{1, 3}$ as an induced subgraph. Now the theorem of Oberly and Sumner is as follows:

\begin{theorem}\label{Ob_Su}\cite[Thm.\ 1]{ObSu}
Every finite, connected, locally connected, claw-free graph on at least three vertices is Hamiltonian.
\end{theorem}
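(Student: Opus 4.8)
The plan is to derive the theorem from an \emph{extension lemma}: in a connected, locally connected, claw-free graph, every cycle $C$ with $V(C)\neq V(G)$ can be replaced by a cycle whose vertex set strictly contains $V(C)$. Granting this, I would take a longest cycle and obtain a Hamiltonian one. Two preliminaries make the setup work. First, $G$ is $2$-connected: a cut vertex $v$ would have neighbours in at least two components of $G-v$ with no edge between those neighbours, so $G[N(v)]$ would be disconnected, contradicting local connectedness; in particular $G$ has a cycle, so a longest cycle exists. Second, the \emph{insertion move}: if $w\notin V(C)$ is adjacent to two consecutive vertices $a,b$ of a cycle $C$, then replacing the edge $ab$ by the path $awb$ produces a cycle on $V(C)\cup\{w\}$ — so to extend $C$ it is enough to exhibit such a $w$.

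For the extension lemma, take a longest cycle $C=c_0c_1\cdots c_{\ell-1}c_0$, assume $V(C)\neq V(G)$, let $H$ be a component of $G-V(C)$, and pick an attachment $c_0$ of $H$ with a neighbour $x\in H$. Apply claw-freeness at $c_0$ to the candidate independent set $\{x,c_1,c_{\ell-1}\}$: if $x\sim c_1$ or $x\sim c_{\ell-1}$ the insertion move gives a cycle longer than $C$, a contradiction; otherwise $c_1c_{\ell-1}\in E(G)$, a ``chord'' across the attachment $c_0$. Thus every attachment of $H$ carries such a chord. Next exploit that $G[N(c_0)]$ is connected and meets both $V(C)$ and $V(G)\setminus V(C)$ (it contains $c_1$ and $x$), hence has an edge across that partition: we get $c_k\in V(C)$ and $y\notin V(C)$ with $c_0\sim y\sim c_k$ and $c_0\sim c_k$ (and $c_k\notin\{c_1,c_{\ell-1}\}$, else insertion at $y$ finishes), and the same dichotomy re-run at $c_k$ on $\{y,c_{k-1},c_{k+1}\}$ yields either an insertion or a second chord $c_{k-1}c_{k+1}$. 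The aim is then to use these chords to reroute: delete the analysed attachments from $C$, splice in their chords to restore a shorter cycle $C^{\ast}$, and reinsert those vertices together with a path through $G-V(C^{\ast})$ joining two attachments of $H$, obtaining a cycle longer than $C$ as soon as that path is long enough.

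The main obstacle is precisely this reroute, and I expect it to be the heart of the argument. The configuration one reads off directly (a short path $c_0yc_k$ whose endpoints are attached, far apart, to a shorter cycle) does not close up by itself, so the lemma must be set up with a careful extremal choice — for instance taking two attachments of $H$ at minimum cyclic distance along $C$, which the insertion move (and the ``detour through $H$'' version of it) shows is at least $2$ — so that the chords land in usable positions, and one must then guarantee that $H$ contains a path between two of its attachments long enough to make the rerouted cycle strictly longer than $C$. Controlling the internal structure of $H$ for this is where local connectedness has to be invoked a second time, now on the neighbourhoods of vertices of $H$, together with claw-freeness of $G[H]$; and if a single reroute is insufficient I would run the whole argument as a descent on $\ell$ or on $|V(G)\setminus V(C)|$. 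Making that descent provably terminate in a contradiction is the delicate point to pin down.
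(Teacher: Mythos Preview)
Your setup is sound and matches the paper: $2$-connectedness from local connectedness, then an extension lemma applied to a longest cycle. The gap you yourself flag in the reroute is genuine, however, and the fixes you sketch (extremal choice of attachments, descent, controlling the inside of $H$) are not the right direction. With only two chords $c_1c_{\ell-1}$ and $c_{k-1}c_{k+1}$ plus the path $c_0yc_k$ there is in general no Hamiltonian cycle on $V(C)\cup\{y\}$, and nothing in your argument forces enough extra adjacencies to close it up.

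The missing idea is to use local connectedness at $u=c_0$ more strongly than you do. You extract a single crossing edge of $G[N(u)]$ between $V(C)$ and its complement; the paper instead takes an entire $v$--$u^{+}$ path $P$ inside $N(u)$ (this exists because $G[N(u)]$ is connected and contains both $v$ and $u^{+}$), truncated at its first hit of $\{u^{+},u^{-}\}$. Now apply claw-freeness not at $u$ but at each interior vertex $z$ of $P$ that lies on $C$: since $z\sim u$ and $z\sim z^{+},z^{-}$, the set $\{u,z^{+},z^{-}\}$ must contain an edge, and if neither $z^{+}$ nor $z^{-}$ is adjacent to $u$ you get the chord $z^{+}z^{-}$. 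When all such $z$ are of this ``singular'' type, the extension is immediate and one-shot: for each such $z$ replace $z^{-}zz^{+}$ in $C$ by the chord $z^{-}z^{+}$, then replace the edge $uu^{+}$ (or $uu^{-}$) by the path $u\,v\,P\,u^{\pm}$. This yields a cycle on $V(C)\cup V(P)\supsetneq V(C)$ with no rerouting puzzle and no descent. The non-singular case (some $z$ on $P$ has $z^{+}$ or $z^{-}$ adjacent to $u$) is handled by a short variant of the same move, using the first such $z$ and the chord $u^{-}u^{+}$ that your own initial claw at $u$ provides. This is the content of the paper's Lemma~\ref{Ob_Su-enlarge}, from which Theorem~\ref{Ob_Su} follows at once.
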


Most Hamiltonicity results consider only finite graphs. The reason for this is that it is not obvious what a Hamilton cycle in an infinite graph should be. We follow the topological approach of \cite{diestel_buch, diestel_arx}, which is to take as the infinite cycles of a graph $G$ the circles in its Freudenthal compactification $|G|$. It is then natural to call $G$ \textit{Hamiltonian} if there is a circle in $|G|$ that contains all vertices of $G$.

Based on this notion of Hamiltonicity for locally finite connected graphs, some Hamiltonicity results for finite graphs have already been generalized to locally finite graphs, see \cite{brewster-funk, bruhn-HC, agelos-HC, lehner-HC}. The most natural candidates for Hamiltonicity theorems that might generalize to locally finite graphs are probably those based on a local condition, such as Theorem~\ref{Ob_Su}: such conditions will be well defined also in infinite graphs and tend to be susceptible to compactness arguments.

In \cite[Question 5.1.3]{stein} Stein asks whether Theorem~\ref{Ob_Su} can be generalized to locally finite graphs. We answer this question positively by the following theorem, which is the main result of this paper.

\begin{theorem}\label{Inf_Ob_Su}
Every locally finite, connected, locally connected, claw-free graph on at least three vertices is Hamiltonian.
\end{theorem}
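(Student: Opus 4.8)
The plan is to combine the finite theorem of Oberly and Sumner (Theorem~\ref{Ob_Su}) with a standard compactness argument for locally finite graphs. The general strategy for proving Hamiltonicity results in $|G|$ is to exhibit a sequence of finite cycles, one in each member of an exhausting family of finite minors or subgraphs of $G$, that are compatible enough to pass to a limit arc/circle. Concretely, I would fix a vertex $v_0$ and consider the sequence of balls $B_n$ (or, better, the contractions $G/\overline{B_n}$ where we contract each component of $G - B_n$ to a single vertex, to keep things finite while respecting the end structure). The goal is to produce in each finite graph $G_n$ a Hamilton cycle $C_n$, and then to use König's Infinity Lemma to select a nested subsequence whose cycles agree on larger and larger finite pieces; the pointwise limit of these finite cycles, together with the ends of $G$, should then be shown to be a circle in $|G|$ through every vertex.

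The key steps, in order, would be: (1) Set up the finite approximations $G_n$ correctly. The subtle point is that contracting the components of $G - B_n$ may destroy local connectedness or introduce claws at the new contraction vertices, so one likely needs a more careful construction — perhaps contracting each infinite component of $G - B_n$ only after first adding edges within its (finite) neighbourhood, or arguing that one can choose the balls so that the resulting finite graph still satisfies the Oberly--Sumner hypotheses, or at least satisfies them after a controlled local modification. (2) Apply Theorem~\ref{Ob_Su} to each $G_n$ to obtain a finite Hamilton cycle $C_n$. (3) For each fixed $m \le n$, the trace of $C_n$ on $B_m$ is a disjoint union of paths covering $V(B_m)$; there are only finitely many possible such ``linkage patterns'' on $B_m$, so by the Infinity Lemma there is a single pattern $P_m$ realised by infinitely many $C_n$, and the $P_m$ can be chosen consistent as $m$ grows. (4) Assemble the $P_m$ into a subgraph $H \subseteq G$ in which every vertex has degree $2$ and which meets every vertex; show that its closure $\overline{H}$ in $|G|$ is connected and contains no finite cycle other than as part of a circle, hence is a single circle — i.e., $H$ together with the ends of $G$ forms a Hamilton circle. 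Here one invokes the standard characterisation that a closed connected subset of $|G|$ in which every vertex has degree $2$ and which contains every vertex, and which is ``topologically spanning'', is a Hamilton circle, modulo ruling out the degenerate possibility of finite cycles plus disjoint arcs through ends.

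The main obstacle I expect is step (1): making the finite approximations inherit the three structural properties (connected, locally connected, claw-free) simultaneously. Local connectedness is fragile under vertex deletion and under contraction, and claw-freeness can be destroyed by contraction as well. One natural fix is to not delete or contract at all in a naive way, but instead to work with the graphs $G_n := G[B_n]$ themselves and prove directly that, for $n$ large, $G[B_n]$ is connected, locally connected at every vertex except possibly those on the ``boundary sphere'', and claw-free — then handle the boundary vertices by a separate local surgery (e.g.\ adding a few edges among boundary vertices that are forced to be present in $G$ anyway by local connectedness of $G$, using the fact that neighbourhoods in $G$ are connected and finite). A second, more delicate obstacle is step (4): one must rule out that the limit object is a disjoint union of a finite cycle and some arcs through ends, or two circles; this is the usual ``connectivity of the limit'' issue and is typically resolved by choosing the linkage patterns $P_m$ in step (3) not arbitrarily but so as to be ``as connected as possible'', or by using that $G$ is connected together with a counting argument on how the $C_n$ cross each finite cut. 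I would expect the bulk of the technical work, and the place where the claw-free and locally connected hypotheses really get used beyond merely invoking Theorem~\ref{Ob_Su}, to be in controlling these boundary effects in the finite approximations.
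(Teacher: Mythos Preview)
Your proposal has two genuine gaps, both of which you flag but neither of which you resolve; the paper takes a fundamentally different route precisely because these obstacles are real.

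\textbf{Step (1) does not go through.} There is no way to produce finite approximations $G_n$ that inherit claw-freeness and local connectedness simultaneously. Contracting the infinite components of $G - B_n$ creates claws at the contraction vertices whenever such a vertex has three pairwise non-adjacent neighbours in $B_n$, and there is no reason this should be avoidable. Your fallback --- working with $G[B_n]$ and repairing the boundary by ``local surgery'' --- is too vague: adding edges may destroy claw-freeness elsewhere, and even if you obtain a Hamilton cycle in the modified graph, its edges near the boundary need not correspond to edges of $G$. The paper does not attempt any such approximation; it works entirely inside $G$.

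\textbf{Step (4) is the deeper problem.} Even granting Hamilton cycles $C_n$ in finite pieces, a K\H{o}nig-type limit $H$ will have every vertex of degree $2$, but nothing bounds the end degrees. The closure $\overline{H}$ could be a disjoint union of several circles (double rays through single ends), or a single end could have degree $4$ or more. Your instinct to count how $C_n$ crosses finite cuts is correct, but a black-box Hamilton cycle in $G_n$ carries no such control: it may cross the cut $\delta(B_m)$ many times, and these crossings need not stabilise as $n \to \infty$.

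The paper's solution is to abandon the black-box use of Theorem~\ref{Ob_Su} and instead extract its \emph{proof mechanism} --- the path extensions of Lemma~\ref{Ob_Su-enlarge} --- to build the sequence $(C_i)$ constructively inside $G$ itself, with explicit control over cut-crossings at every stage. The key structural input is Lemma~\ref{struct_toll}: in a claw-free graph, a minimal finite separator $\mathscr{S}$ of a finite connected set from all ends decomposes into minimal separators $S_1,\ldots,S_k$, one per infinite component $K_j$, and by Lemma~\ref{complete} each $S_j$ sits in $G$ in a very rigid way (neighbourhoods into $K_j$ are complete). Lemma~\ref{Ob_Su-cut-1} exploits this to extend $C_m$ to $C_{m+1}$ by pushing through each $S_j$ with exactly one ``in'' and one ``out'' edge, simultaneously maintaining witness sets $M_j$ with $|E(C_{m+1}) \cap \delta(M_j)| = 2$. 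Lemma~\ref{HC-extract} then packages the exact conditions (i)--(v) under which such a sequence has a Hamilton circle as its limit; condition~(v) is precisely the cut-crossing control that your compactness approach cannot supply. So the hypotheses are used throughout the construction, not merely to invoke the finite theorem: claw-freeness gives the separator structure, and local connectedness supplies the extension paths.
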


The structure of this paper is as follows. In Section~2 we recall some basic definitions and fix some further notation we shall need in this paper. Section~3 contains some facts, lemmas and theorems which we shall need in the proof of the main result. The proof of Theorem~\ref{Inf_Ob_Su} together with some corollaries is the content of Section~4.

While I was writing this paper, I noticed that Hamann, Lehner and Pott \cite{Ha_Leh_Po} are investigating similar questions.

\section{Basic definitions and notation}

In this section, important definitions and notation are listed. Furthermore, some basic definitions are recalled in order to avoid confusion with regard to the notation. In general, we will follow the graph theoretical notation of \cite{diestel_buch} in this paper if nothing different is stated. For basic facts about graph theory, the reader is also referred to \cite{diestel_buch}. Besides finite graph theory, a topological approach to locally finite graphs is covered in \cite[Ch.\ 8.5]{diestel_buch}. For a wider survey in this field, see \cite{diestel_arx}.

All graphs which are considered in this paper are undirected and simple. In general, we do not assume a graph to be finite. For this section, we fix an arbitrary graph $G = (V, E)$.

The graph $G$ is called \textit{locally finite} if every vertex of $G$ has only finitely many neighbours.

For a vertex set $X$ of $G$, we denote by $G[X]$ the induced subgraph of $G$ with vertex set $X$. For vertex sets with up to four vertices, we omit the set brackets and write $G[v, w, x, y]$ instead of $G[\lbrace v, w, x, y \rbrace]$ where $\lbrace v, w, x, y \rbrace \subseteq V$. We write $G-X$ for the graph $G[V \setminus X]$ and for singleton sets, we omit the set brackets and write just $G-v$ instead of $G-\lbrace v \rbrace$ where $v \in V$. For the cut which consists of all edges of $G$ that have one endvertex in $X$ and the other endvertex in $V \setminus X$, we write $\delta(X)$.

Let $C$ be a cycle of $G$ and $u$ be a vertex of $C$. Then we write $u^+$ and $u^-$ for the neighbour of $u$ in $C$ in positive and negative, respectively, direction of $C$ using a fixed orientation of $C$. Later on we will not mention that we fix an orientation for the considered cycle using this notation. We implicitly fix an arbitrary orientation of the cycle.

Let $P$ be a path in $G$ and $T$ be a tree in $G$. We write $\mathring{P}$ for the subpath of $P$ which we obtain from $P$ by removing the endvertices of $P$. If $s$ and $t$ are vertices of $T$, we write $sTt$ for the unique path in $T$ with endvertices $s$ and $t$. Note that this covers also the case where $T$ is a path. If $P_v = v_0 \ldots v_n$ and $P_w = w_0 \ldots w_k$ are paths in $G$ with $n, k \in \mathbb{N}$ where $v_n$ and $w_0$ may be equal but apart from that these paths are disjoint and the vertices $v_n, w_0$ are the only vertices of $P_v$ and $P_w$ which lie in $T$, we write $v_0 \ldots v_nTw_0 \ldots w_k$ for the path with vertex set ${V(P_v) \cup V(v_nTw_0) \cup V(P_w)}$ and edge set ${E(P_v) \cup E(v_nTw_0) \cup E(P_w)}$.

For a vertex set $X \subseteq V$ and an integer $k \geq 1$, we denote with $N_k(X)$ the set of vertices in $G$ from which the distance is at least $1$ and at most $k$ to $X$ in $G$. We write $N(X)$ instead of $N_1(X)$, which denotes the usual neighbourhood of $X$ in $G$. For a singleton set $\lbrace v \rbrace \subseteq V$, we omit the set brackets and write just $N_k(v)$ and $N(v)$ instead of $N_k(\lbrace v \rbrace)$ and $N(\lbrace v \rbrace)$, respectively. If $H$ is a subgraph of $G$, we just write $N_k(H)$ and $N(H)$ instead of $N_k(V(H))$ and $N(V(H))$, respectively.

We call $G$ \textit{locally connected} if for every vertex $v \in V$ the subgraph $G[N(v)]$ is connected.

We refer to the graph $K_{1, 3}$ also as \textit{claw}. The graph $G$ is called \textit{claw-free} if it does not contain the claw as an induced subgraph.

A one-way infinite path in $G$ is called a \textit{ray} of $G$. Now an equivalence relation can be defined on the set of all rays of $G$ by saying that two rays in $G$ are equivalent if they cannot be separated by finitely many vertices. It is easy to check that this relation really defines an equivalence relation. The corresponding equivalence classes of rays under this relation are called the \textit{ends} of $G$.

For the rest of this section, we assume $G$ to be locally finite and connected. A topology can be defined on $G$ together with its ends to obtain a topological space which we call $|G|$. For a precise definition of $|G|$, see \cite[Ch.\ 8.5]{diestel_buch}. It should be pointed out that, inside $|G|$, every ray of $G$ converges to the end of $G$ it is contained in.

Apart from the definition of $|G|$ as in \cite[Ch.\ 8.5]{diestel_buch}, there is an equivalent way of defining the topological space $|G|$, namely, by endowing $G$ with the topology of a \linebreak $1$-complex (also called CW complex of dimension $1$) and considering the Freudenthal compactification of $G$. This connection was examined in \cite{freud-equi}. For the original paper of Freudenthal about the Freudenthal compactification, see \cite{freud}.

For a point set $X$ in $|G|$, we denote its closure in $|G|$ by $\overline{X}$.

A subspace $Z$ of $|G|$ is called \textit{standard subspace} of $|G|$ if $Z = \overline{H}$ where $H$ is a subgraph of $G$.

A \textit{circle} in $|G|$ is the image of a homeomorphism which maps from the unit circle $S^1$ in $\mathbb{R}^2$ to $|G|$. The graph $G$ is called \textit{Hamiltonian} if there exists a circle in $|G|$ which contains all vertices of $G$. We call such a circle a \textit{Hamilton circle} of $G$. For $G$ being finite, this coincides with the usual meaning, namely that there is a cycle in $G$ which contains all vertices of $G$. Such cycles are called \textit{Hamilton cycles} of $G$.

The image of a homeomorphism which maps from the closed real unit interval $[0, 1]$ to $|G|$ is called an \textit{arc} in $|G|$. For an arc $\alpha$ in $|G|$, we call a point $x$ of $|G|$ an \textit{endpoint} of $\alpha$ if $0$ or $1$ is mapped to $x$ by the homeomorphism which defines $\alpha$. Furthermore, we say that $\alpha$ \textit{ends} in a point $x$ of $|G|$ if $x$ is an endpoint of $\alpha$. A ray together with the end it converges to is a simple example of an arc. In general, the structure of arcs is more complicated. An arc may contain $2^{\aleph_0}$ many ends.

A subspace $Z$ of $|G|$ is called \textit{arc-connected} if for every two points of $Z$ there is an arc in $Z$ which has these two points as its endpoints.

Let $\omega$ be an end of $G$ and $Z$ be a standard subspace of $|G|$. Then we define the \textit{degree} of $\omega$ in $Z$ as a value in $\mathbb{N} \cup \lbrace \infty \rbrace$, namely the supremum of the number of edge-disjoint arcs in $Z$ that end in $\omega$. The next definition is due to Bruhn and Stein (see \cite{circle}) and allows us to distinguish the parity of degrees of ends also in the case where their degrees are infinite. We call the degree of $\omega$ \textit{even} in $Z$ if there exists a finite set $S \subseteq V$ such that for every finite set $S' \subseteq V$ with $S \subseteq S'$ the maximum number of edge-disjoint arcs in $Z$ whose endpoints are $\omega$ and some $s \in S'$ is even. Otherwise, we call the degree of $\omega$ \textit{odd} in $Z$.

\section{Toolkit}

This section covers some facts which we shall need later or for the proofs of the last two lemmas of this section, Lemma~\ref{struct_toll} and Lemma~\ref{HC-extract}. These two lemmas are very important tools for the proof of the main result. We begin with a basic proposition about infinite graphs.

\begin{proposition}\label{ray}\cite[Prop.\ 8.2.1]{diestel_buch}
Every infinite connected graph has a vertex of infinite degree or contains a ray.
\end{proposition}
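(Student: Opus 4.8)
The plan is to observe that it suffices to show that every infinite, connected, locally finite graph contains a ray: indeed, if $G$ has no vertex of infinite degree then $G$ is locally finite, so one of the two alternatives of the statement holds automatically. The idea for producing the ray is to build it greedily, one vertex at a time, always staying inside an infinite connected piece of the graph so that the construction never gets stuck and never revisits a vertex.

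First I would fix a vertex $v_0 \in V(G)$. Since $G$ is connected, every component of $G - v_0$ contains a neighbour of $v_0$; as $v_0$ has only finitely many neighbours, $G - v_0$ therefore has only finitely many components. Because $G$ is infinite, at least one of these components, call it $C_1$, is infinite, and we may pick a neighbour $v_1$ of $v_0$ with $v_1 \in C_1$.

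Now I would iterate this step, maintaining the invariant that after step $n$ we have vertices $v_0, v_1, \dots, v_n$ and infinite connected subgraphs $C_1 \supseteq C_2 \supseteq \dots \supseteq C_n$ with $v_i \in C_i$, with $v_{i-1} v_i \in E(G)$, and with $C_i$ a component of $C_{i-1} - v_{i-1}$ (setting $C_0 := G$). Applying the argument of the previous paragraph to the infinite, connected, locally finite graph $C_n$ and its vertex $v_n$: the graph $C_n - v_n$ has finitely many components, each meeting $N(v_n)$, and one of them, $C_{n+1}$, is infinite; pick $v_{n+1} \in N(v_n) \cap C_{n+1}$. This produces an infinite sequence $v_0, v_1, v_2, \dots$ in which consecutive vertices are adjacent. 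They are moreover pairwise distinct: by construction $C_{n+1} \subseteq C_n - v_n$, so $v_n \notin C_m$ whenever $m > n$, while $v_m \in C_m$; hence $v_n \ne v_m$. Therefore $v_0 v_1 v_2 \dots$ is a ray in $G$.

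The only point requiring care is the bookkeeping that keeps the construction alive and the chosen vertices distinct, namely maintaining the nested sequence of infinite components $C_1 \supseteq C_2 \supseteq \dots$ rather than merely picking neighbours arbitrarily; once this invariant is in place each step is immediate. (Alternatively, the statement can be derived from K\"onig's Infinity Lemma applied to the rooted tree of all finite paths of $G$ starting at $v_0$, but the direct argument above seems shortest.)
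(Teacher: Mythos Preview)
Your argument is correct. The paper itself does not give a proof of this proposition; it only remarks that the proof ``is based on a compactness argument and is not difficult'' and then omits it. Your greedy construction of a ray inside a nested sequence of infinite components is the standard direct proof and is essentially the unwinding of the K\"onig's Lemma argument the paper alludes to (and which you also mention as an alternative). There is nothing to correct.
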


Especially for infinite connected graphs which are locally finite, Proposition~\ref{ray} shows the existence of rays. The proof of this proposition is based on a compactness argument and is not difficult. Anyhow, we do not state a proof here.

We proceed with a couple of lemmas about connectedness in the topological space $|G|$ of a locally finite connected graph $G$.

\begin{lemma}\label{arc_conn}\cite[Thm.\ 2.6]{path-cyc-tree}
If $G$ is a locally finite connected graph, then every closed topologically connected subset of $|G|$ is arc-connected.
\end{lemma}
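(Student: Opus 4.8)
The statement to prove is that in a locally finite connected graph $G$, every closed topologically connected subset $X$ of $|G|$ is arc-connected. Since the claim is cited as \cite[Thm.\ 2.6]{path-cyc-tree}, I only sketch the natural route. The plan is to use the standard fact that $|G|$ is a compact metrizable space (the Freudenthal compactification of a locally finite connected graph), hence $X$, being closed in $|G|$, is itself compact metrizable and connected. The key reduction is therefore: a compact, connected, metrizable space which is \emph{locally connected} is arc-connected — this is a classical theorem of Hahn and Mazurkiewicz (more precisely, a Peano continuum is arcwise connected). So the real work is to establish that $X$ inherits enough local structure from $|G|$, or to argue directly within $|G|$ without invoking the full Hahn--Mazurkiewicz machinery.

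The more hands-on approach, which I would actually carry out, is to build the arc between two given points $x, y \in X$ by a compactness-and-exhaustion argument. First I would fix an exhaustion $V_1 \subseteq V_2 \subseteq \cdots$ of $V(G)$ by finite connected vertex sets with $\bigcup_n V_n = V$, and consider for each $n$ the finite contraction-type quotient of $|G|$ obtained by collapsing each component of $G - V_n$ (together with the ends it contains) to a single point; this quotient is a finite graph-like space $X_n$, the image of $X$ in it is closed and connected, hence contains a path $P_n$ from the image of $x$ to the image of $y$. The point is that each $P_n$ meets only finitely many of the "boundary" pieces, and by König's Lemma / compactness of $|G|$ one can choose the $P_n$ compatibly so that they converge to an arc. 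The second step is to turn the resulting nested sequence of approximations into an honest topological arc: take the inverse limit of the $P_n$, check that it is a continuum with exactly two non-cut points, and invoke the characterization of the arc as the unique such continuum. Throughout, local finiteness is what guarantees that each boundary cut $\delta(V_n)$ is finite, so the quotients $X_n$ are genuinely finite objects and the compactness arguments apply.

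The main obstacle is the passage to the limit: one must ensure that the approximating paths $P_n$ stabilize on every compact piece of $G$, i.e. that for each fixed $m$ the traces $P_n \cap \overline{G[V_m]}$ eventually agree, and that the limiting point set is not merely connected but actually homeomorphic to $[0,1]$ (ruling out, say, that it contains a "bad" subcontinuum with infinitely many branch points or fails to be arcwise connected at an end). Handling the ends carefully — verifying that the arc passes through each end at most once and that the topology at the ends is respected — is where locally finiteness and the structure of $|G|$ must be used most delicately. Since a complete and careful treatment appears in \cite{path-cyc-tree}, we do not reproduce it here and will use the statement as given.
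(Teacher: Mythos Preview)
The paper does not give its own proof of this lemma; it simply quotes the result from \cite[Thm.\ 2.6]{path-cyc-tree} and uses it as a black box. Your final disposition --- to invoke the citation and not reproduce the argument --- is therefore exactly what the paper does, and nothing more is required here.

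That said, a word of caution about the sketch you offered. Your first route via the Hahn--Mazurkiewicz theorem has a genuine gap: that theorem requires the continuum to be \emph{locally connected}, but the hypothesis on $X$ is only that it is closed and connected in $|G|$. Closed connected subsets of a Peano continuum need not themselves be locally connected (the closed topologist's sine curve inside the square is the standard counterexample), so one cannot simply inherit local connectedness from $|G|$; something specific to the graph-like structure of $|G|$ must be used. Your second, more combinatorial route --- approximating via finite minors obtained by contracting the components of $G - V_n$ and passing to a limit --- is much closer in spirit to how Diestel and K\"uhn actually argue, and it is precisely the local finiteness (finite cuts $\delta(V_n)$) that replaces the missing local-connectedness hypothesis. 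If you ever need to reconstruct the proof rather than cite it, pursue the second line and drop the first.
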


It follows from this lemma that being connected is equivalent to being arc-connected for closed topologically connected subsets of $|G|$. We shall use this fact for topologically connected standard subspaces, which are closed by definition.

The next lemma is a basic tool and gives a necessary condition for the existence of certain arcs in $|G|$. To formulate the lemma, we use the following notation. For an edge set $F \subseteq E(G)$, we denote by $\mathring{F}$ the point set in $|G|$ which consists of all inner points of edges of $F$.

\begin{lemma}\label{jumping-arc}\cite[Lemma 8.5.3]{diestel_buch}
Let $G$ be a locally finite connected graph and \linebreak ${F \subseteq E(G)}$ be a cut with the sides $V_1$ and $V_2$.
\textnormal{
\begin{enumerate}[\normalfont(i)]
\item \textit{If $F$ is finite, then $\overline{V_1} \cap \overline{V_2} = \emptyset$, and there is no arc in $|G| \setminus \mathring{F}$ with one endpoint in $V_1$ and the other in $V_2$.}
\item \textit{If $F$ is infinite, then $\overline{V_1} \cap \overline{V_2} \neq \emptyset$, and there may be such an arc.}
\end{enumerate}
}
\end{lemma}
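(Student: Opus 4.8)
The statement has two halves. For part (i), the key observation is that a finite cut $F$ separating $V_1$ from $V_2$ has only finitely many edges, hence the open set $\mathring{F}$ is a finite union of open arcs. The plan is to show first that $\overline{V_1}$ and $\overline{V_2}$ are disjoint: any point of $|G|$ is either an inner point of an edge, a vertex, or an end, and in each case one checks it lies in the closure of at most one side. For an end $\omega$, note that since $F$ is finite, removing the (finitely many) vertices incident with $F$ separates all rays representing $\omega$ into one side, so $\omega$ has a neighbourhood in $|G|$ missing the other side. For the arc statement, suppose for contradiction $\alpha$ is an arc in $|G| \setminus \mathring{F}$ from a point of $V_1$ to a point of $V_2$. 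Then $\alpha^{-1}(\overline{V_1})$ and $\alpha^{-1}(\overline{V_2})$ are two nonempty closed subsets of $[0,1]$; since $\overline{V_1} \cap \overline{V_2} = \emptyset$ and $\alpha \subseteq \overline{V_1} \cup \overline{V_2}$ (every point of $|G|$ not an inner point of an $F$-edge lies in one of the two closures — this needs the finiteness of $F$ again, to handle ends), these two sets partition $[0,1]$ into two nonempty disjoint closed sets, contradicting connectedness of $[0,1]$.

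For part (ii), when $F$ is infinite, I would produce the nonempty intersection $\overline{V_1} \cap \overline{V_2}$ by a compactness argument: pick an infinite sequence of distinct edges $e_1, e_2, \dots$ in $F$ with endpoints $u_i \in V_1$, $w_i \in V_2$. By local finiteness the $u_i$ (and $w_i$) cannot all be equal, and one extracts a subsequence along which they head to infinity; using that $|G|$ is compact, the corresponding edge midpoints (or the vertices $u_i$) have a limit point $x$, which must be an end, and one checks $x$ lies in both $\overline{V_1}$ and $\overline{V_2}$ because infinitely many $u_i$'s and infinitely many $w_i$'s converge to it. The assertion that "there may be such an arc" is not a claim to be proved but merely a remark that the conclusion of (i) can fail; I would support it by exhibiting a single example — e.g. the infinite ladder, or $\mathbb{Z} \times K_2$, with $F$ the set of rungs and $V_1, V_2$ the two sides: the arc consisting of one side's ray followed through the (common) end and back down the other side's ray works.

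The main obstacle is the bookkeeping around ends: verifying in part (i) that every end lies in the closure of exactly one side (using finiteness of $F$) and that an end in $\overline{V_i}$ genuinely has a basic open neighbourhood avoiding $\overline{V_{3-i}}$ and $\mathring{F}$, so that the partition-of-$[0,1]$ argument is airtight. This is where the precise definition of the topology on $|G|$ from \cite[Ch.\ 8.5]{diestel_buch} — basic open neighbourhoods of ends given by components of $G - S$ for finite $S$, together with the relevant end-points and edge-tails — gets used. Since this lemma is quoted from \cite[Lemma 8.5.3]{diestel_buch}, in the paper itself I would simply cite it rather than reprove it; the sketch above is how one would establish it from first principles.
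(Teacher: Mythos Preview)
The paper does not prove this lemma at all: it is stated with the citation \cite[Lemma 8.5.3]{diestel_buch} and then used as a black box, exactly as you note in your final sentence. Your sketch of how one would prove it from first principles is essentially the standard argument from Diestel's book (finite $F$ gives a finite separating vertex set $S$, basic open neighbourhoods of ends determined by components of $G-S$ lie on one side, and connectedness of $[0,1]$ finishes; for infinite $F$, compactness of $|G|$ yields a common limit end), so there is nothing to correct.
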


The following lemma states a graph-theoretical characterization of topologically connected standard subspaces. By Lemma~\ref{arc_conn}, we get also a characterization of arc-connected standard subspaces.

\begin{lemma}\label{top_conn}\cite[Lemma 8.5.5]{diestel_buch}
If $G$ is a locally finite connected graph, then a standard subspace of $|G|$ is topologically connected (equivalently: arc-connected) if and only if it contains an edge from every finite cut of $G$ of which it meets both sides.
\end{lemma}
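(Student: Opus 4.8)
The plan is to treat the two implications separately, writing $Z=\overline{X}$ for a subgraph $X$ of $G$ and relying on two elementary facts about standard subspaces: $Z$ is closed in $|G|$, and every edge of $G$ either lies entirely in $X$ (hence in $Z$) or has its open interior disjoint from $Z$, while the vertices of $G$ lying in $Z$ are precisely the vertices of $X$.

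\textbf{Necessity.} I would argue by contraposition. Suppose $F=\delta(V_1)$ is a \emph{finite} cut with sides $V_1,V_2$ such that $Z$ has a vertex in $V_1$ and a vertex in $V_2$, but $Z$ contains no edge of $F$; then $Z\subseteq|G|\setminus\mathring{F}$. Since $F$ is finite, every ray of $G$ crosses $F$ only finitely often, so the rays of any end of $G$ eventually lie inside $G[V_1]$ or inside $G[V_2]$; together with Lemma~\ref{jumping-arc}(i) this shows that $|G|\setminus\mathring{F}$ is the disjoint union of the two closed standard subspaces $\overline{G[V_1]}$ and $\overline{G[V_2]}$. Intersecting with $Z$ yields a partition of $Z$ into two relatively closed sets, both nonempty by hypothesis, so $Z$ is not topologically connected.

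\textbf{Sufficiency.} Again by contraposition: assume $Z=A\sqcup B$ with $A,B$ nonempty and relatively clopen in $Z$. Since $Z$ is closed in the compact Hausdorff space $|G|$, the sets $A$ and $B$ are compact, so there are disjoint open sets $U\supseteq A$ and $W\supseteq B$. Each edge of $X$ is connected and contained in $Z=A\sqcup B$, hence lies entirely in $A$ or entirely in $B$; this induces a partition $V(X)=V_A\sqcup V_B$ with no edge of $X$ between $V_A$ and $V_B$, and a short argument --- descending from a point of $A$ (which exists) to a vertex, in the end case through a small neighbourhood and using that the point lies in $\overline{X}$ --- shows $V_A\neq\emptyset\neq V_B$. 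It then remains to extend $(V_A,V_B)$ to a bipartition of $V(G)$ with a finite cut. For each end $\omega$ of $G$ I would pick a finite $S_\omega\subseteq V(G)$ such that, writing $C_\omega$ for the component of $G-S_\omega$ in which every ray of $\omega$ has a tail, the standard subspace $\overline{C_\omega}$ is contained in $U$, is contained in $W$, or is disjoint from $Z$, according to whether $\omega\in A$, $\omega\in B$, or $\omega\notin Z$. Each $\overline{C_\omega}$ contains a neighbourhood of $\omega$ in the (compact) space of ends of $G$, so finitely many of them cover all ends; let $S^{\ast}$ be the union of the corresponding finitely many sets $S_\omega$. As $G$ is locally finite, $G-S^{\ast}$ has only finitely many components, so $T:=S^{\ast}\cup\bigcup\{\,V(C):C\text{ a finite component of }G-S^{\ast}\,\}$ is finite, and every component of $G-T$ is infinite with closure contained in $U$, contained in $W$, or disjoint from $Z$. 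Now set $V_1$ to be the union of $T\setminus V_B$ with all components of $G-T$ of the first and the third kind, and put $V_2:=V(G)\setminus V_1$. One checks that $V_A\subseteq V_1$ and $V_B\subseteq V_2$; that $\delta(V_1)$ consists only of edges incident with the finite set $T$, hence is finite; and that $\delta(V_1)$ contains no edge of $X$, since every edge of $X$ runs within $V_A$ or within $V_B$. Thus $\delta(V_1)$ is a finite cut, $Z$ has a vertex on each of its sides, yet $Z$ contains no edge of it --- exactly the failure of the condition.

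\textbf{Where the difficulty lies.} The necessity direction is routine. I expect the work to be in the sufficiency direction, and the crux there is to convert the purely topological separation $Z=A\sqcup B$ into a finite combinatorial cut. Two points make this possible: first, the edges of $X$ are forced to be ``monochromatic'', so that however we extend the vertex bipartition the resulting cut automatically avoids $E(X)$; second, local finiteness keeps everything finite, the key instance being that $G-S^{\ast}$ has only finitely many components, which lets us absorb all of its finite components into the single finite set $T$. A little extra care is needed in the neighbourhood bookkeeping with the space of ends, and in the elementary verification that $A$ and $B$ contain vertices at all.
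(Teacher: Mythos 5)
The paper does not prove this lemma at all: it is quoted verbatim from Diestel's textbook (\cite[Lemma~8.5.5]{diestel_buch}) and used as a black box, so there is no in-paper proof to compare your attempt against. On its own terms, your argument appears correct and is in the spirit of the standard textbook proof. The necessity direction is sound: if $Z$ avoids a finite cut $F=\delta(V_1)$ yet has vertices on both sides, then Lemma~\ref{jumping-arc}(i) gives $\overline{V_1}\cap\overline{V_2}=\emptyset$, and since every end of $G$ eventually lies on exactly one side of the finite cut, $|G|\setminus\mathring F$ really is the disjoint union of the closed sets $\overline{G[V_1]}$ and $\overline{G[V_2]}$, which disconnects $Z$. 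The sufficiency direction correctly exploits the three compactness facts that make this work: $|G|$ is compact Hausdorff (hence normal, giving the separating open sets $U\supseteq A$, $W\supseteq B$); the end space $\Omega(G)$ is compact (giving the finite subcover by sets $\overline{C_\omega}$); and local finiteness ensures $G-S^\ast$ has only finitely many components, so absorbing the finite ones into $T$ keeps $T$ finite, and every component of $G-T$ is infinite and thus (via Proposition~\ref{ray}) sits inside some $C_{\omega_i}$, forcing its closure to be monochromatic. The observation that edges of $X$ are forced into $A$ or $B$ as a whole, so that $\delta(V_1)$ avoids $E(X)$ no matter how the vertices of $T$ outside $X$ are assigned, is exactly the right point. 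Two spots are stated rather compressedly and would need to be spelled out in a polished write-up: the claim $V_A\neq\emptyset\neq V_B$ (the end case should explicitly use that $\omega\in\overline{X}$ forces vertices of $X$ inside any basic neighbourhood of $\omega$, hence inside $A$), and the step showing that each component $K$ of $G-T$ lies entirely within a single $C_{\omega_i}$ (one should say that $K$ is a connected subgraph of $G-S_{\omega_i}$ meeting $C_{\omega_i}$, hence contained in it). Neither is a gap, just detail to be filled in.
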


Now we state a theorem which helps us to verify when every vertex and every end of a graph has even degree in a standard subspace.

\begin{theorem}\label{cycspace}\cite[Thm.\ 2.5]{diestel_arx}
Let $G$ be a locally finite connected graph. Then the following are equivalent for $D \subseteq E(G)$:
\begin{enumerate}[\normalfont(i)]
\item $D$ meets every finite cut in an even number of edges.
\item Every vertex and every end of $G$ has even degree in $\overline{D}$.
\end{enumerate}
\end{theorem}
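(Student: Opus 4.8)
The plan is to prove the two implications separately. Throughout I write $d_Z(v)$, $d_Z(\omega)$ for the degree of a vertex $v$ or end $\omega$ in a standard subspace $Z$, and I use repeatedly that $d_{\overline D}(v) = |D \cap \delta(\lbrace v \rbrace)|$ and that the handshake identity $\sum_{v \in Y} d_{\overline D}(v) = 2\,|D \cap E(G[Y])| + |D \cap \delta(Y)|$ holds for every finite $Y \subseteq V$.

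\textbf{(i) $\Rightarrow$ (ii).} For a vertex $v$ the cut $\delta(\lbrace v \rbrace)$ is finite by local finiteness, so $d_{\overline D}(v)$ is even by (i). Now fix an end $\omega$; for finite $S \subseteq V$ let $C_S$ be the (unique) component of $G - S$ in which every ray of $\omega$ has a tail. Every edge leaving $C_S$ ends in $S$, so $\delta(C_S)$ is a finite cut and $|D \cap \delta(C_S)|$ is even by (i). I would show that for $S$ large enough and every finite $S' \supseteq S$ the maximal number $m(S')$ of edge-disjoint arcs in $\overline D$ with endpoints $\omega$ and some vertex of $S'$ equals $|D \cap \delta(C_{S'})|$, which gives that $d_{\overline D}(\omega)$ is even in the required sense. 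The bound $m(S') \le |D \cap \delta(C_{S'})|$ is the easy half: since $\omega \in \overline{C_{S'}}$ and $S' \subseteq V \setminus C_{S'}$, Lemma~\ref{jumping-arc}(i) applied to the finite cut $\delta(C_{S'})$ forces each such arc to pass through an inner point of an edge of $\delta(C_{S'})$, which, lying on an arc contained in $\overline D$, belongs to $D$; edge-disjoint arcs use distinct such edges. The reverse bound — routing that many pairwise edge-disjoint arcs from $\omega$ through the edges of $D \cap \delta(C_{S'})$ — is the substantial point; the natural route is via the fact that an edge set meeting every finite cut evenly decomposes $\overline D$ into edge-disjoint circles, so that, once $S$ separates these circles from $\omega$ cleanly, the even number of crossings of each circle with $\delta(C_{S'})$ can be organised into the desired arcs.

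\textbf{(ii) $\Rightarrow$ (i).} Let $F$ be a finite cut with non-empty sides $V_1, V_2$, and $W$ the finite set of endvertices of its edges. As $G$ is connected, each component of $G[V_1]$ contains an endvertex of an edge of $F$, and distinct components contain distinct ones, so $G[V_1]$ has finitely many components. For any finite $X \subseteq V_1$ with $W \cap V_1 \subseteq X$, all edges of $F$ are incident with $X$, hence the components of $G - X$ are those of $G[V_1] - X$ together with those of $G[V_2]$, and for a component $C$ of $G[V_1] - X$ one has $\delta(C) = \delta_{G[V_1]}(C)$. Applying the handshake identity to $X$ and to the finite components of $G[V_1] - X$ yields, for every such $X$, that $|D \cap F| \equiv \sum_C |D \cap \delta(C)| \pmod 2$, the sum over the infinite components $C$ of $G[V_1] - X$; so it suffices to make every term of this sum even. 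Here (ii) enters through the ends: using compactness of the space of ends of $G$, I would enlarge $X$ so that each infinite component $C$ of $G[V_1] - X$ is ``concentrated at'' a single end $\omega$ (that is, $C = C_\omega(X)$ and $X$ contains a finite set witnessing evenness of $d_{\overline D}(\omega)$), and then argue, as in the first implication via Lemma~\ref{jumping-arc}(i), that $|D \cap \delta(C)|$ is forced to be even.

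\textbf{Expected main obstacle.} The routine ingredients are the vertex case and the finite-side handshake computations, together with the upper bounds from Lemma~\ref{jumping-arc}(i). The genuine difficulty, arising symmetrically in both directions, is to reconcile the arc-based notion of the degree of an end with the cut-parity condition: beyond the easy inequality $m(S) \le |D \cap \delta(C_S)|$ one needs a lower bound producing enough edge-disjoint arcs — equivalently, the absence of a ``dangling'' part of $\overline D$ carrying leftover parity. This is precisely the circle-decomposition of standard subspaces with all even degrees (or an infinite Menger-type duality in $|G|$), and it is the step I expect to cost the most work.
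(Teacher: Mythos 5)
The paper does not prove this theorem at all: immediately after stating it, the author notes that it ``follows from a result of Diestel and K\"uhn \cite[Thm.\ 7.1]{inf-cyc-1} together with a result of Berger and Bruhn \cite[Thm.\ 5]{even-deg}'' and cites \cite[Thm.\ 2.5]{diestel_arx}. So there is no in-paper proof to compare against; what you have written is an attempt to reconstruct a proof of a substantial result from the literature.

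Your vertex case and the two handshake computations in (ii)~$\Rightarrow$~(i) are sound, and you are right to flag the lower bound (producing enough edge-disjoint arcs in $\overline D$) as the crux. But there is a concrete further gap in (ii)~$\Rightarrow$~(i) that your outline does not survive: the step ``enlarge $X$ so that each infinite component $C$ of $G[V_1]-X$ is concentrated at a single end'' cannot be carried out with a finite $X$ in general. A locally finite graph can have uncountably many ends (e.g.\ the infinite binary tree), and after removing any finite set there are only finitely many components, so some infinite component must still contain infinitely many ends. Your reduction therefore needs a substitute: some version of a \emph{handshake identity over the ends in $\overline C$}, i.e.\ a statement that the parity of $|D\cap\delta(C)|$ is controlled by the sum of end-degree parities over all ends in $\overline C$ together with vertex degrees inside $C$. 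Establishing such an identity is essentially equivalent to the circle-decomposition of $\overline D$, which is exactly the content of the Diestel--K\"uhn and Berger--Bruhn results the paper cites; your outline presupposes it twice (once for the lower bound $m(S')\ge |D\cap\delta(C_{S'})|$ in the forward direction, and once here) without giving it. So the proposal is an accurate map of where the difficulty lies, but both of its load-bearing steps are precisely the deep results the paper chooses to import rather than reprove, and the ``single-end'' localisation you lean on for the converse is false as stated.
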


In \cite[Thm.\ 2.5]{diestel_arx} are actually four equivalent statements involved, but we need only two of them here. Theorem~\ref{cycspace} follows from a result of Diestel and \linebreak K\"{u}hn \cite[Thm.\ 7.1]{inf-cyc-1} together with a result of Berger and Bruhn \cite[Thm.\ 5]{even-deg}.

Bruhn and Stein showed the following characterization of circles in terms of vertex and end degrees.

\begin{lemma}\label{circ}\cite[Prop.\ 3]{circle}
Let $C$ be a subgraph of a locally finite connected graph $G$. Then $\overline{C}$ is a circle if and only if $\overline{C}$ is topologically connected and every vertex or end $x$ of $G$ with $x \in \overline{C}$ has degree two in $C$.
\end{lemma}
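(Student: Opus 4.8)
The plan is to prove the two implications separately, using throughout that $|G|$ is compact and metrizable, and hence so is the closed subspace $\overline{C}$.

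For the forward direction, suppose $\overline{C}$ is a circle and fix a homeomorphism $\varphi\colon S^1\to\overline{C}$. Being a continuous image of the connected space $S^1$, the set $\overline{C}$ is topologically connected. For the degree assertions I would argue locally. Since $G$ is locally finite, every vertex $v$ of $G$ has a basic open neighbourhood in $|G|$ meeting only the edges at $v$; hence a small enough neighbourhood of $v\in\overline{C}$ in $\overline{C}$ consists of $v$ together with a half-open segment into each of the $\deg_C(v)$ edges of $C$ at $v$, so deleting $v$ from it leaves exactly $\deg_C(v)$ arc-components. Transporting a small interval-neighbourhood of $\varphi^{-1}(v)$ through $\varphi$ and deleting $v$ leaves exactly two components, so $\deg_C(v)=2$. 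For an end $\omega\in\overline{C}$: cutting $S^1$ at $\varphi^{-1}(\omega)$ and at $\varphi^{-1}(u)$ for some vertex $u\in\overline{C}$ splits it into two arcs, whose images under $\varphi$ are two edge-disjoint arcs of $\overline{C}$ ending in $\omega$, so the degree of $\omega$ in $\overline{C}$ is at least $2$; and were there three pairwise edge-disjoint arcs of $\overline{C}$ ending in $\omega$, then inside the open interval $S^1\setminus\{\varphi^{-1}(\omega)\}$ they would appear as three arcs each running out to one of the two ends of the interval, so two of them would be nested and hence not edge-disjoint, a contradiction. Thus the degree of $\omega$ in $\overline{C}$ equals $2$.

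For the converse, suppose $\overline{C}$ is topologically connected and every vertex or end lying in $\overline{C}$ has degree $2$ in $C$. As every vertex of $C$ has degree $2$ in $C$, the graph $C$ is a vertex-disjoint union of finite cycles and double rays; in particular $C$ has an edge, so $\overline{C}\neq\emptyset$. Next I would observe that $C$ has no finite cycle component other than $C$ itself: if $D$ were such a component, then $\delta(V(D))$ is a finite cut (as $G$ is locally finite and $V(D)$ finite) both of whose sides meet $\overline{C}$, yet no edge of $C$ lies in it, contradicting Lemma~\ref{top_conn}. Hence either $C$ is a single finite cycle, in which case $\overline{C}=C$ is plainly a circle, or $C$ is a disjoint union of double rays. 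In the latter case the goal is to show that $\overline{C}$ is a nonempty, compact, connected, metrizable $1$-manifold without boundary, for then the classification of such spaces gives $\overline{C}\cong S^1$, i.e.\ $\overline{C}$ is a circle. Compactness and connectedness are already in hand; interior points of edges of $C$ obviously have neighbourhoods in $\overline{C}$ homeomorphic to an open interval, and, by the local-finiteness argument of the first part together with $\deg_C(v)=2$, so does every vertex $v\in\overline{C}$.

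The remaining --- and main --- task is the local structure of $\overline{C}$ at an end $\omega\in\overline{C}$. Here one uses $\deg_C(\omega)=2$: the hypothesis bounds the number of pairwise edge-disjoint arcs of $\overline{C}$ ending in $\omega$ by $2$, while the arc-connectedness of $\overline{C}$ (Lemma~\ref{arc_conn}) produces at least two such arcs, so exactly two ``strands'' of $\overline{C}$ accumulate at $\omega$; a compactness argument, showing that the basic neighbourhoods of $\omega$ eventually isolate precisely these two strands, then yields a neighbourhood of $\omega$ in $\overline{C}$ homeomorphic to an open interval. The subtle case is when $\omega$ is not itself an end of any double-ray component of $C$ but a limit of ends through which such components pass; then each of the two strands at $\omega$ is a ``chain'' of double rays successively joined at ends, each chain being again homeomorphic to a half-open interval, and one rules out a third strand or a dead end by applying the degree-$2$ hypothesis at $\omega$ and at the intermediate ends (which also lie in $\overline{C}$, hence have degree $2$). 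Finitely controlling this assembly of pieces near an end is the main obstacle; everything else reduces to local finiteness, the degree hypotheses, topological connectedness (Lemmas~\ref{jumping-arc} and~\ref{top_conn}), and the classification of compact connected $1$-manifolds. One may, if convenient, first invoke Theorem~\ref{cycspace} to restate the even-degree content of the hypothesis as the condition that $E(C)$ meets every finite cut in an even number of edges, which streamlines some of the bookkeeping near ends.
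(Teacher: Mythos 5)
A point of context first: the paper does not prove this lemma at all; it cites it directly from Bruhn and Stein~\cite{circle}, so there is no in-paper argument to compare yours against. Judged on its own merits, your forward direction is essentially sound: the punctured-neighbourhood count at a vertex and the two subarcs of $S^1$ on either side of $\varphi^{-1}(\omega)$, together with the nesting observation that two edge-disjoint arcs of $\overline{C}$ into $\omega$ cannot approach $\varphi^{-1}(\omega)$ from the same side, do give degree exactly two at vertices and ends of $\overline{C}$. (One small slip: arc-connectedness by itself does not give two edge-disjoint arcs into an end; the lower bound two comes from the degree hypothesis, or from the parity argument via Theorem~\ref{cycspace}.) Your reduction of the converse to the classification of compact connected $1$-manifolds is a legitimate strategy, and the finite-cycle-component exclusion via Lemma~\ref{top_conn} and the manifold charts at vertices and interior edge points are fine.

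The genuine gap, which you in fact flag yourself, is the one step that carries all the weight: showing that an end $\omega\in\overline{C}$ is a $1$-manifold point of $\overline{C}$. The degree-two hypothesis is a statement about a supremum over pairwise edge-disjoint arcs, and converting it into a local chart at $\omega$ is exactly what needs proving, not assuming. Nothing in the sketch rules out, for instance, that infinitely many double-ray components of $C$ pass through every basic open neighbourhood of $\omega$ (so that $\overline{C}$ accumulates at $\omega$ far more densely than as "two strands"), nor does it rule out a dead end or a branching at one of the intermediate ends in the chain of double rays you describe. To close this one has to argue, from the degree-two hypothesis at $\omega$ and at the nearby ends together with the cut parity of $E(C)$ supplied by Theorem~\ref{cycspace} and the connectedness criterion of Lemma~\ref{top_conn}, that some finite cut separating $\omega$ from a fixed vertex is met by $E(C)$ in exactly two edges, and that inside the corresponding basic neighbourhood $\overline{C}$ consists precisely of two one-way infinite threads which, with $\omega$ adjoined, form an open interval. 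At present the phrase ``a compactness argument'' is a placeholder for this argument rather than a proof of it, so the converse direction is not yet established.
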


It should be noted at this point that Theorem~\ref{cycspace} and Lemma~\ref{circ} are crucial for the proof of Lemma~\ref{HC-extract}.

Now we turn towards claw-free graphs and prove two basic facts about minimal vertex separators in such graphs.

\begin{proposition}\label{2 comp}
Let $G$ be a connected claw-free graph and $S$ be a minimal vertex separator in $G$. Then $G-S$ has exactly two components.
\end{proposition}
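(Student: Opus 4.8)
The plan is to argue by contradiction. Suppose $S$ is a minimal vertex separator of the connected claw-free graph $G$, so that $G-S$ has at least two components. First I would show that $G-S$ has \emph{at most} two components, and then observe that since $S$ is a separator it has at least two, giving exactly two. So assume for contradiction that $G-S$ has (at least) three distinct components $C_1$, $C_2$, $C_3$. By minimality of $S$, every vertex $s\in S$ is adjacent to at least two of these components — in fact, to every component of $G-S$, since if $s$ had no neighbour in some $C_i$, then $S\setminus\{s\}$ would still separate $C_i$ from the rest, contradicting minimality. Fix one vertex $s\in S$. Then $s$ has a neighbour $v_i\in C_i$ for each $i\in\{1,2,3\}$.

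Now consider the induced subgraph $G[s,v_1,v_2,v_3]$. Since $v_1,v_2,v_3$ lie in pairwise distinct components of $G-S$ and $s\in S$, there is no edge between $v_i$ and $v_j$ for $i\neq j$ (such an edge would join two different components of $G-S$). Each $v_i$ is adjacent to $s$ by choice. Hence $G[s,v_1,v_2,v_3]$ is exactly a claw $K_{1,3}$ with centre $s$, contradicting that $G$ is claw-free. This contradiction shows $G-S$ cannot have three or more components, so it has exactly two.

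The main point to get right carefully is the claim that minimality of $S$ forces every $s\in S$ to have a neighbour in every component of $G-S$: the argument is that if $s$ had no neighbour in a component $C$, then $C$ is still a full component of $G-(S\setminus\{s\})$ separated from the other components (all of which still contain vertices, as $G-S$ had at least two components to begin with), so $S\setminus\{s\}$ is a strictly smaller separator, contradicting minimality. Once that is established, the claw appears immediately and no further topology or infinite-graph machinery is needed — the proposition holds for arbitrary (not necessarily locally finite) connected claw-free graphs, exactly as stated.
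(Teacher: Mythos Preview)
Your proof is correct and follows essentially the same approach as the paper: assume three components, use minimality of $S$ to find a vertex $s\in S$ with a neighbour in each of three components, and observe that these four vertices induce a claw. You supply more detail than the paper on why minimality forces $s$ to have a neighbour in every component and on why the $v_i$ are pairwise non-adjacent, but the argument is the same.
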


\begin{proof}
Suppose $G-S$ has at least three components. Since $S$ is a minimal vertex separator, every vertex of $S$ has at least one neighbour in each component of $G-S$. Now pick an $s \in S$ and neighbours $v_1, v_2, v_3$ of $s$ which lie in different components of $G-S$. Then $G[s, v_1, v_2, v_3]$ is an induced claw. This contradicts our assumption on $G$.
\end{proof}

The next lemma is a cornerstone of the constructive proof of Lemma~\ref{Ob_Su-cut-1}.

\begin{lemma}\label{complete}
Let $G$ be a connected claw-free graph and $S$ be a minimal vertex separator in $G$. For every vertex $s \in S$ and every component $K$ of $G-S$, the graph $G[N(s)\cap V(K)]$ is complete.
\end{lemma}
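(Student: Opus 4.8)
The plan is to argue by contradiction, exploiting the claw-free hypothesis just as in Proposition~\ref{2 comp}. Suppose that for some $s \in S$ and some component $K$ of $G-S$ the graph $G[N(s) \cap V(K)]$ is not complete. Then there are two vertices $u, w \in N(s) \cap V(K)$ that are nonadjacent in $G$. To produce an induced claw at $s$, I need a fourth neighbour $v$ of $s$ that is nonadjacent to both $u$ and $w$; the natural candidate is a neighbour of $s$ lying outside $K$. By Proposition~\ref{2 comp}, $G-S$ has exactly two components, say $K$ and $K'$, and since $S$ is a \emph{minimal} vertex separator, $s$ has a neighbour $v$ in $K'$. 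As $u, w \in V(K)$ and $v \in V(K')$ lie in different components of $G-S$, there is no edge between $v$ and $u$ or between $v$ and $w$. Hence $G[s, u, v, w]$ is an induced claw, contradicting claw-freeness.

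The one point that needs a little care is the minimality of $S$: I must make sure that $s$ really does have a neighbour in the component $K'$ distinct from $K$. This is exactly where minimality enters — if $s$ had no neighbour in $K'$, then $S \setminus \{s\}$ would still separate $V(K)$ from $V(K')$ (any $V(K)$--$V(K')$ path avoiding $S \setminus \{s\}$ would have to pass through $s$, but $s$ has no neighbour in $K'$, so no such path exists), contradicting the minimality of $S$. So $s$ indeed has a neighbour in each of the two components of $G-S$, and in particular in $K'$.

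I expect the argument to be short and essentially free of obstacles; the only thing to keep straight is the bookkeeping of which vertices lie in which component, so that the three leaves of the putative claw are pairwise nonadjacent. Since $u$ and $w$ were chosen nonadjacent by assumption and $v$ is separated from both by $S$, this is immediate, and the proof is complete.
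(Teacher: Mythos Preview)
Your proof is correct and follows essentially the same route as the paper: invoke Proposition~\ref{2 comp} to get exactly two components, pick two nonadjacent neighbours of $s$ in one component and a neighbour of $s$ in the other (using minimality of $S$), and observe that these four vertices induce a claw. Your justification of why minimality guarantees a neighbour in the second component is slightly more detailed than the paper's, but the argument is the same.
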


\begin{proof}
By Proposition~\ref{2 comp}, we know that $G-S$ has precisely two components, say $K_1$ and $K_2$. Now suppose for a contradiction that the statement of the lemma is false. Then there exists a vertex $s \in S$ such that $s$ has two distinct neighbours $v_1, v_2$ which are not adjacent and lie both in $K_1$ or $K_2$, say in $K_1$. Since $S$ is a minimal separator, it has at least one neighbour in each component of $G-S$. Let $v_3$ be a neighbour of $s$ in $K_2$. Now the graph $G[s, v_1, v_2, v_3]$ is an induced claw in $G$, which is a contradiction.
\end{proof}

Before we turn towards the two main lemmas of this section, let us prove a basic fact about locally connected graphs.

\begin{proposition}\label{2-conn}
Every connected, locally connected graph on at least three vertices is $2$-connected.
\end{proposition}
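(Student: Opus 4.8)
The plan is to unwind the definition of $2$-connectedness and argue by contradiction via a forbidden disconnection of a neighbourhood. Recall that a graph $G$ is $2$-connected precisely when it is connected, has at least three vertices, and contains no cutvertex. Since connectedness and the bound on $|V(G)|$ are given by hypothesis, it suffices to show that a connected, locally connected graph $G$ on at least three vertices has no cutvertex.

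First I would suppose for a contradiction that some vertex $v \in V(G)$ is a cutvertex. Then $G - v$ has at least two components; pick two of them, say $K_1$ and $K_2$. Since $G$ is connected, $v$ has at least one neighbour in $K_1$ and at least one neighbour in $K_2$, so both $N(v) \cap V(K_1)$ and $N(v) \cap V(K_2)$ are nonempty. On the other hand, there is no edge of $G$ joining a vertex of $K_1$ to a vertex of $K_2$, since $K_1$ and $K_2$ are distinct components of $G - v$ and $v \notin N(v)$. Hence $G[N(v)]$ contains the two nonempty, mutually non-adjacent vertex sets $N(v) \cap V(K_1)$ and $N(v) \cap V(K_2)$, and is therefore disconnected.

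This contradicts the assumption that $G$ is locally connected, namely that $G[N(v)]$ is connected for every vertex $v$. Consequently $G$ has no cutvertex, and together with the hypotheses that $G$ is connected and has at least three vertices this yields that $G$ is $2$-connected.

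I do not expect any real obstacle here; the only point requiring a little care is to make sure the neighbourhood $G[N(v)]$ is genuinely split into two nonempty parts, which is exactly what the assumption that $v$ is a cutvertex provides (via the two components $K_1$, $K_2$ of $G-v$), so no separate treatment of leaves or other degenerate cases is needed.
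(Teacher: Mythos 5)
Your proof is correct and follows essentially the same approach as the paper's: assume a cutvertex exists, observe that its neighbourhood meets at least two components of the graph minus that vertex with no edges between them, and derive a contradiction with local connectedness. The paper phrases the contradiction in terms of a path within $G[N(s)]$ that would have to join two components of $G-s$, whereas you phrase it directly as $G[N(v)]$ being disconnected; these are the same observation.
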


\begin{proof}
Let $G$ be a connected, locally connected graph on at least three vertices and suppose it is not $2$-connected. Then $G$ cannot be complete. So there exists a minimal vertex separator which consists just of one vertex, say $s$, because $G$ is connected. By minimality, $s$ has neighbours in all components of $G-s$. Let $v_1$ and $v_2$ be neighbours of $s$ which lie in different components of $G-s$. Since $G$ is locally connected, there exists a path in the neighbourhood of $s$ from $v_1$ to $v_2$. This path does not meet $s$ but connects two components of $G-s$. This contradicts that $\lbrace s \rbrace$ is a separator in $G$.
\end{proof}

The following lemma deals with the structure of an infinite, locally finite, connected, claw-free graph $G$. Roughly speaking, the lemma says that if we separate any finite connected subgraph from all ends by a finite minimal set $\mathscr{S}$, then this separator decomposes into minimal vertex separators each of which has neighbours in precisely two components of $G-\mathscr{S}$, namely in the unique finite component of $G-\mathscr{S}$ and in an infinite component (see Figure~\ref{struct_toll_situation}).

\begin{lemma}\label{struct_toll}
Let $G$ be an infinite, locally finite, connected, claw-free graph and ${X}$ be a finite vertex set of $G$ such that $G[X]$ is connected. Furthermore, let $\mathscr{S} \subseteq V(G)$ be a finite minimal vertex set such that $\mathscr{S} \cap X = \emptyset$ and every ray starting in $X$ has to meet $\mathscr{S}$. Then the following holds:
\begin{enumerate}[\normalfont(i)]

\item $G-\mathscr{S}$ has $k \geq 1$ infinite components $K_1, \ldots, K_k$ and the set $\mathscr{S}$ is the disjoint union of minimal vertex separators $S_1, \ldots, S_k$ in $G$ such that for every $i$ with $1 \leq i \leq k$ each vertex in $S_i$ has a neighbour in $K_j$ if and only if $j=i$.
\item $G-\mathscr{S}$ has precisely one finite component $K_0$. This component contains all vertices of $X$ and every vertex of $\mathscr{S}$ has a neighbour in $K_0$.
\end{enumerate}
\end{lemma}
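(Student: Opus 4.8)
The plan is to first identify the component of $G-\mathscr{S}$ containing $X$, then use the minimality of $\mathscr{S}$ to locate the neighbours of the separator vertices, and finally invoke claw-freeness to force the two-component picture. Since $G[X]$ is connected and disjoint from $\mathscr{S}$, all of $X$ lies in one component $K_0$ of $G-\mathscr{S}$; I would show $K_0$ is finite, for otherwise $K_0$ is an infinite locally finite connected graph and hence contains a ray by Proposition~\ref{ray}, and prepending a path inside $K_0$ from $X$ to that ray gives a ray starting in $X$ that avoids $\mathscr{S}$, contradicting the choice of $\mathscr{S}$. Moreover, since $G$ is locally finite and $\mathscr{S}$ is finite, only finitely many components of $G-\mathscr{S}$ meet $N(\mathscr{S})$, and as $G$ is connected every component does; so $G-\mathscr{S}$ has finitely many components, among which I list the infinite ones as $K_1,\dots,K_k$.

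Next I would use minimality of $\mathscr{S}$: for each $s\in\mathscr{S}$ the set $\mathscr{S}\setminus\{s\}$ still avoids $X$, hence it fails the ray condition, so there is a ray $R=v_0v_1v_2\ldots$ starting in $X$ with $R\cap\mathscr{S}=\{s\}$, say $v_m=s$ (with $m\geq 1$ as $s\notin X$). Then $v_0\ldots v_{m-1}$ lies in $G-\mathscr{S}$ and meets $X$, so it lies in $K_0$ and $v_{m-1}$ is a neighbour of $s$ in $K_0$; and $v_{m+1}v_{m+2}\ldots$ is a ray in $G-\mathscr{S}$, so $v_{m+1}$ is a neighbour of $s$ lying in some infinite component. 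On the other hand, claw-freeness forbids $s$ from having neighbours in three distinct components of $G-\mathscr{S}$ (such neighbours are pairwise non-adjacent and would induce a $K_{1,3}$ with $s$). Hence every $s\in\mathscr{S}$ has neighbours in exactly two components, namely $K_0$ and exactly one infinite $K_i$; setting $S_i:=\{s\in\mathscr{S}:s\text{ has a neighbour in }K_i\}$ gives a partition of $\mathscr{S}$ into nonempty sets ($K_i$, being a proper part of connected $G$, is adjacent to $\mathscr{S}$), and each $s\in S_i$ has a neighbour in $K_j$ ($1\leq j\leq k$) iff $j=i$. This also yields (ii): any component of $G-\mathscr{S}$ other than $K_0$ is adjacent to some $s\in\mathscr{S}$ and must therefore be $K_0$ or infinite, so $K_0$ is the unique finite component; it contains $X$ by construction, and every $s$ has a neighbour in it. Also $k\geq 1$ since $G$ is infinite while $\mathscr{S}$ and $K_0$ are finite.

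To finish (i) I must check that each $S_i$ is a minimal vertex separator, and for this I would first determine the components of $G-S_i$. Since the components of $G-\mathscr{S}$ are exactly $K_0,K_1,\dots,K_k$ and $\mathscr{S}=S_1\cup\dots\cup S_k$ is a disjoint union, we have $V(G)\setminus S_i=V(K_i)\cup\big(V(K_0)\cup\bigcup_{j\neq i}(V(K_j)\cup S_j)\big)$; no vertex of $K_i$ has a neighbour outside $K_i$ in $G-S_i$ (its only external neighbours lie in $S_i$), so $K_i$ is a component of $G-S_i$, while the remaining set is connected in $G-S_i$ because each $s\in S_j$ with $j\neq i$ is adjacent to $K_0$ and each $K_j$ hangs off $K_0$ through $S_j$; call this second component $L_0$. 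Every $s\in S_i$ has a neighbour in $K_i$ and, via $K_0\subseteq L_0$, in $L_0$; hence deleting any proper subset $S'\subsetneq S_i$ leaves $V(G)\setminus S'=K_i\cup L_0\cup(S_i\setminus S')$ connected (some $s_0\in S_i\setminus S'$ joins $K_i$ to $L_0$, and every other vertex of $S_i\setminus S'$ attaches to $K_i$), so no proper subset of $S_i$ separates $G$, i.e.\ $S_i$ is a minimal vertex separator.

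The step I expect to be the main obstacle is this last one: concluding minimality of $S_i$ requires knowing the exact component structure of $G-S_i$, which in turn relies on having already established, via claw-freeness, that each vertex of $\mathscr{S}$ touches only $K_0$ and a single infinite component — so the arguments must be carried out in this order. (I am also tacitly assuming $X\neq\emptyset$; if $X=\emptyset$ then $\mathscr{S}=\emptyset$ and the statement is essentially vacuous.)
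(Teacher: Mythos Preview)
Your proof is correct and follows essentially the same approach as the paper's: identify $K_0$, use minimality of $\mathscr{S}$ to give each $s\in\mathscr{S}$ neighbours in $K_0$ and in some infinite component, invoke claw-freeness to forbid a third, and partition $\mathscr{S}$ accordingly. Your verification that each $S_i$ is a minimal separator (via the explicit two-component structure of $G-S_i$) is more detailed than the paper's, which simply appeals to the minimality of $\mathscr{S}$, but the underlying reasoning is the same.
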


\begin{proof}
Since $G[X]$ is connected, there must be a component of $G-\mathscr{S}$ that contains all vertices of $X$. Let $K_0$ be this component. We show first that $K_0$ is finite. Suppose not for a contradiction. Then there is a ray in $K_0$ by Proposition~\ref{ray} since $K_0$ is locally finite and connected. Using the connectedness of $K_0$ again, there exists also a ray in $K_0$ that starts in $X$. No ray in $K_0$ meets the set $\mathscr{S}$ because $K_0$ is a component of $G-\mathscr{S}$. This contradicts the definition of $\mathscr{S}$. By the minimality of $\mathscr{S}$, we get that every vertex $s \in \mathscr{S}$ has at least one neighbour in $K_0$. If this would not be the case, then $\mathscr{S}-s$ would be a proper subset of $\mathscr{S}$ which no ray can avoid that starts in $X$. By the same argument, we also know that every vertex $s \in \mathscr{S}$ has at least one neighbour in an infinite component of $G-\mathscr{S}$. Furthermore, we know that every vertex $s \in \mathscr{S}$ can have only two neighbours in different components because $G$ is claw-free. Since $G$ is locally finite and $\mathscr{S}$ is finite, $G-\mathscr{S}$ has only finitely many components. So let $K_1, \ldots, K_k$ be the infinite components of $G-\mathscr{S}$. Using that $G$ is infinite, locally finite and connected, the graph $G$ contains a ray by Proposition~\ref{ray}, which implies that $k \geq 1$ holds. The previous observations ensure that we can partition the set $\mathscr{S}$ into vertex sets $S_1, \ldots, S_k$ where a vertex $s \in \mathscr{S}$ lies in $S_i$ if and only if $s$ has a neighbour in $K_i$ for every $i$ with $1 \leq i \leq k$. This definition implies that $S_i$ is a separator in $G$ for each $i$ with $1 \leq i \leq k$. Using the minimality of $\mathscr{S}$, we obtain furthermore that each set $S_i$ is a minimal vertex separator in $G$. This completes the proof of statement~(i). It remains to check that $K_0$ is the only finite component of $G - \mathscr{S}$. Let us consider an arbitrary vertex $s \in \mathscr{S}$. We know that $s$ lies in $S_i$ for some $i$ with $1 \leq i \leq k$. So $s$ has a neighbour in two components of $G-\mathscr{S}$, namely $K_0$ and $K_i$. Since $G$ is claw-free, the vertex $s$ cannot have a neighbour in any other component of $G-\mathscr{S}$. As $s$ was chosen arbitrarily, we obtain that $K_0, K_1, \ldots, K_k$ are all components of $G-\mathscr{S}$. Especially, $K_0$ is the only finite component of $G-\mathscr{S}$ and contains all vertices of $X$ by definition.
\end{proof}

\begin{figure}[htbp]
\centering
\includegraphics{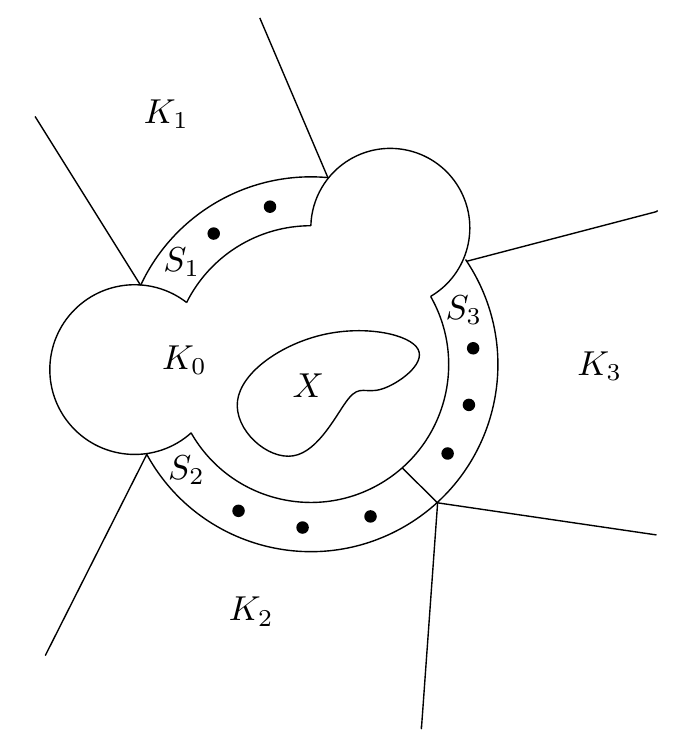}
\caption{Example for Lemma~\ref{struct_toll} with $k=3$.}
\label{struct_toll_situation}
\end{figure}

The next lemma is our main tool to prove that an infinite, locally finite, connected graph $G$ is Hamiltonian. In order to apply this lemma, we need a sequence of cycles of $G$ and a set of vertex sets which fulfil a couple of conditions. While we obtain a Hamilton circle as a limit object from the sequence of cycles, the vertex sets act as witnesses to verify that the limit object is really a circle. Let us look more closely at the idea of the lemma before we state it. We define a limit object from a sequence of cycles of $G$ by saying that a vertex or an edge of $G$ is contained in the limit if it lies in all but finitely many cycles of the sequence. Conversely, we say that a vertex or an edge of $G$ is not in the limit if it lies in only finitely many of the cycles. Since we have to be able to tell for each vertex and for each edge of $G$ whether it shall be contained in the limit or not, we have the conditions $(\text{i})$ and $(\text{iv})$ in the lemma. Condition $(\text{i})$ takes care of even more. It forces every vertex of $G$ to be in the limit, which must be the case if we want to end up with a Hamilton circle. To ensure that the limit object becomes a circle, it is enough by Lemma~\ref{circ} to take care that the limit is topologically connected and that the degree of each vertex and end is two in the limit. Everything except the right end degree is no problem using the cycles in the sequence and condition $(\text{iv})$. However, without further conditions the limit might have ends with degree larger than two. To prevent this problem, we use the conditions $(\text{ii})$, $(\text{iii})$ and $(\text{v})$. They guaranty the existence of a sequence of finite cuts for each end such that each sequence converges to its corresponding end and the limit object meets each of the cuts precisely twice. This bounds the end degree by two. Note that it is not easy to get such cycles and vertex sets. The main work to prove Theorem~\ref{Inf_Ob_Su} is to construct cycles and vertex sets which fulfil the required conditions. The way of constructing these objects relies on the structure of the graph as described in Lemma~\ref{struct_toll}.

\begin{lemma}\label{HC-extract}
Let $G$ be an infinite, locally finite, connected graph and $(C_i)_{i \in \mathbb{N}}$ be a sequence of cycles of $G$. Now $G$ is Hamiltonian if there exists an integer $k_i \geq 1$ for every $i \geq 1$ and vertex sets $M^i_j \subseteq V(G)$ for every $i \geq 1$ and $j$ with $1 \leq j \leq k_i$ such that the following is true:
\textnormal{
\begin{enumerate}[\normalfont(i)]
\item \textit{For every vertex $v$ of $G$, there exists an integer $j \geq 0$ such that $v \in V(C_i)$ holds for every $i \geq j$.}
\item \textit{For every $i \geq 1$ and $j$ with $1 \leq j \leq k_i$, the cut $\delta(M^i_j)$ is finite.}
\item \textit{For every end $\omega$ of $G$, there is a function $f : \mathbb{N} \setminus \lbrace 0 \rbrace \longrightarrow \mathbb{N}$ such that the inclusion ${M^{j}_{f(j)} \subseteq M^i_{f(i)}}$ holds for all integers $i, j$ with $1 \leq i \leq j$ and the equation ${M_{\omega}:= \bigcap^{\infty}_{i=1} \overline{M^i_{f(i)}} = \lbrace \omega \rbrace}$ is true.}
\item \textit{$E(C_i) \cap E(C_j) \subseteq E(C_{j+1})$ holds for all integers $i$ and $j$ with $0 \leq i < j$.}
\item \textit{The equations $E(C_i) \cap \delta(M^p_j) = E(C_p) \cap \delta(M^p_j)$ and $|E(C_i) \cap \delta(M^p_j)| = 2$ hold for each triple $(i, p, j)$ which satisfies $1 \leq p \leq i$ and $1 \leq j \leq k_p$.}
\end{enumerate}
}
\end{lemma}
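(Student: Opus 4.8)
\emph{Plan.} The idea is to distil the sequence $(C_i)_{i\in\mathbb N}$ into a single subgraph $C$ of $G$ whose closure $\overline C$ in $|G|$ is a Hamilton circle, and to verify this via Lemma~\ref{circ}. Call an edge $e$ of $G$ \emph{persistent} if $e\in E(C_i)$ for infinitely many $i$, let $D$ be the set of persistent edges, and put $C:=(V(G),D)$. The first observation, from condition~(iv), is that if $e\in E(C_i)\cap E(C_j)$ with $i<j$ then $e\in E(C_m)$ for all $m\ge j$; hence $D$ is equally the set of edges lying in all but finitely many of the $C_i$, and for every \emph{finite} edge set $F$ there is an index past which $E(C_i)\cap F=D\cap F$.

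Using this I would pin down the vertex and cut behaviour of $D$. For a vertex $v$ there are only finitely many incident edges (local finiteness), so by the previous paragraph the set of $C_i$-edges at $v$ equals the set of $D$-edges at $v$ for all large $i$; by condition~(i) the former has size $2$ for large $i$, so $v$ has degree exactly $2$ in $C$, and in particular $V(C)=V(G)$. For a finite cut $F$, each finite cycle $C_i$ meets $F$ in an even number of edges, and $E(C_i)\cap F=D\cap F$ for large $i$, so $D$ meets every finite cut evenly; by Theorem~\ref{cycspace}, every vertex and every end of $G$ has even degree in $\overline C$. Finally, if $F$ is a finite cut with both sides non-empty, choose a vertex on each side; by~(i) both lie on $C_i$ for large $i$, so the connected cycle $C_i$ uses an edge of $F$, whence $D\cap F\neq\emptyset$. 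Since $\overline C$ contains every vertex it meets both sides of every such cut, so by Lemma~\ref{top_conn} $\overline C$ is topologically connected, and then by Lemma~\ref{arc_conn} it is arc-connected; in particular each end lying in $\overline C$ has degree at least $1$, hence (being even) at least $2$.

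The remaining, and I expect hardest, point is the upper bound on end degrees. Fix an end $\omega$, take $f$ as in condition~(iii), and write $M_q:=M^q_{f(q)}$ and $F_q:=\delta(M_q)$, which is finite by~(ii). Applying condition~(v) to the triples $(i,q,f(q))$ for $i\ge q$ shows $E(C_i)\cap F_q$ is independent of $i\ge q$ and has exactly two elements, so $D\cap F_q=E(C_q)\cap F_q$ is a two-element set $P_q$. As $F_q$ is finite, $|G|\setminus\mathring{F_q}$ splits into the two disjoint closed sets $\overline{G[M_q]}$ and $\overline{G-M_q}$ (Lemma~\ref{jumping-arc}), with $\omega\in\overline{G[M_q]}$; the nested closed sets $\overline{G[M_q]}$ have intersection $\{\omega\}$, since $\bigcap_q\overline{M_q}=\{\omega\}$ forces $\bigcap_q M_q=\emptyset$. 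Suppose now there are three pairwise edge-disjoint arcs in $\overline C$ ending at $\omega$; truncating if necessary, we may assume the other endpoint of each is a vertex or an end. No such arc lies in $\overline{G[M_q]}$ for every $q$, since that intersection is the single point $\omega$; once an arc fails to lie in $\overline{G[M_q]}$ it fails for all larger $q$ by nesting, and then, by connectedness together with the relatively clopen partition $\overline C\setminus\mathring{P_q}=(\overline C\cap\overline{G[M_q]})\sqcup(\overline C\cap\overline{G-M_q})$ with $\omega$ in the first part, the arc must meet $\mathring{P_q}$. Hence there is a single $q$ for which all three arcs meet $\mathring{P_q}$. Write $P_q=\{e,e'\}$: since no arc has an endpoint in the interior of an edge, an arc meeting $\mathring e$ (resp.\ $\mathring{e'}$) must contain all of $e$ (resp.\ $e'$). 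By the pigeonhole principle two of the three arcs then share one of $e,e'$, contradicting edge-disjointness. So every end of $G$ has degree at most $2$ in $\overline C$, and therefore exactly $2$ for the ends lying in $\overline C$.

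Putting the pieces together, $\overline C$ is a topologically connected standard subspace in which every vertex of $G$ and every end lying in it has degree $2$; by Lemma~\ref{circ}, $\overline C$ is a circle, and since $\overline C\supseteq V(G)$ it is a Hamilton circle, so $G$ is Hamiltonian. The main obstacle is the third paragraph: controlling the end degrees from above, where one must pass from the purely combinatorial fact $|D\cap F_q|=2$ to a genuine topological statement about arcs, via the clopen separation by finite cuts and the fact that an arc through the interior of an edge (with endpoints elsewhere) traverses the whole edge.
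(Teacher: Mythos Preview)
Your proposal is correct and follows essentially the same route as the paper: define the limit subgraph $C$ via the eventually-stable edges (using~(iv)), obtain even degrees and topological connectedness from Theorem~\ref{cycspace} and Lemma~\ref{top_conn} by testing against finite cuts, get arc-connectedness from Lemma~\ref{arc_conn}, and bound end degrees above by~$2$ using the nested finite cuts $\delta(M^q_{f(q)})$ from~(iii) together with~(v) and Lemma~\ref{jumping-arc}, then conclude with Lemma~\ref{circ}. Your treatment of the upper bound is slightly more explicit than the paper's---you spell out the clopen partition of $\overline{C}\setminus\mathring{P_q}$ and the truncation ensuring that an arc meeting $\mathring e$ contains all of~$e$---but the substance is the same.
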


\begin{proof}
We define a subgraph $C$ of $G$ and show that its closure is a Hamilton circle of $G$. Let
\begin{align*}
V(C) &= \bigcup^{\infty}_{i=0} V(C_i), \\
E(C) &= \lbrace e \in E(G) \; : \; e \in E(C_i) \textnormal{ for infinitely many } i \geq 0 \rbrace.
\end{align*}
Note that condition~(i) implies $V(C) = V(G)$. So the closure $\overline{C}$ contains all ends of $G$. We get also immediately that $E(C) \neq \emptyset$ by condition~(v). Furthermore, condition~(iv) implies that for every edge $e \in E(C)$ there exists an integer $j \geq 0$ such that $e \in E(C_i)$ for every $i \geq j$. In order to prove that $\overline{C}$ is a Hamilton circle of $G$, we want to apply Lemma~\ref{circ}. So we need $\overline{C}$ to be topologically connected and that every vertex and every end of $\overline{C}$ has degree two in $\overline{C}$. We prove both of these statements with two claims. Before we can do this, we need the following claim.

\setcounter{claim}{0}
\begin{claim}
Let $X \subseteq V(G)$ be a finite set of vertices and $D \subseteq E(G)$ be a finite set of edges. Then there exists an integer $j \geq 0$ such that $X \subseteq V(C_i)$ holds for every $i \geq j$ and that each edge of $D$ is either contained in $E(C_i)$ for every $i \geq j$ or not contained in any $E(C_i)$ for $i \geq j$.
\end{claim}

Since $X$ is finite, we can use condition~(i) to find an integer $q$ such that $X \subseteq V(C_i)$ holds for every $i \geq q$. Each edge $e$ of $D$ lies either in at most one cycle $C_{\ell - 1}$ of the sequence with $\ell \geq 1$ or in at least two, say in $C_{m}$ and $C_{n}$ with $m, n \geq 0$. In the first case, we know that $e$ does not lie in any $E(C_i)$ for $i \geq \ell$. Using condition~(iv), we obtain in the latter case that $e$ lies in $E(C_i)$ for every $i \geq \ell'$ where $\ell' = \textnormal{max}\lbrace m, n \rbrace$. Using these observations, we can define a function $g: D \longrightarrow \mathbb{N}$ by
\[g(e) = 
\begin{cases}
\ell  &\mbox{if $\ell$ is the least integer such that $e \notin E(C_i)$ for every $i \geq \ell$} \\
\ell' & \mbox{if $\ell'$ is the least integer such that $e \in E(C_i)$ for every $i \geq \ell'$}.
\end{cases} \]
Since $D$ is finite, we can set $j = \textnormal{max} \lbrace \lbrace q \rbrace \cup \lbrace g(e) \; : \; e \in D \rbrace \rbrace$. The conditions~(i) and (iv) imply that $j$ has the desired properties. This completes the proof of Claim~1.
\newline

Now we state and prove the two claims which allow us to apply Lemma~\ref{circ}.

\begin{claim}
$\overline{C}$ is topologically connected and every vertex as well as every end of $G$ has even degree in $\overline{C}$.
\end{claim}

Since $\overline{C}$ is a standard subspace of $|G|$ and contains all vertices of $G$, it suffices to show, by Lemma~\ref{top_conn} and Theorem~\ref{cycspace}, that $E(C)$ meets every nonempty finite cut in an even number of edges and at least twice. So fix any nonempty finite cut $D$. Now take an integer $j$ such that $C_j$ contains vertices from each side of the partition which induces $D$ and that each edge of $D$ is either contained in $E(C_i)$ for every $i \geq j$ or not contained in any $E(C_i)$ for $i \geq j$. Since $D$ is finite, we obtain by Claim~1 that it is possible to find such an integer $j$. Now we use that the cycle $C_j$ has vertices in both sides of the partition which induces $D$. So it must hit the cut $D$ in an even number of edges and at least twice. The same holds for $C$ by its definition. This completes the proof of Claim~2.

\begin{claim}
Every vertex and every end in $\overline{C}$ has degree two in $\overline{C}$.
\end{claim}

As noted before, $\overline{C}$ contains all vertices and ends of $G$. First, we check that every vertex has degree two in $C$. For this purpose, we fix an arbitrary vertex $v$ of $G$. Let $j$ be an integer such that $v$ is a vertex of $C_j$ and that each edge which is incident with $v$ is either contained in $E(C_i)$ for every $i \geq j$ or not contained in any $E(C_i)$ for $i \geq j$. We can find such an integer $j$ because of Claim~1 and because $G$ is locally finite, which implies that the cut $\delta(\lbrace v \rbrace)$ is finite. Since $C_j$ is a cycle and $v$ is one of its vertices, $v$ has degree two in $C_j$ and therefore also in $C$ by definition.

For the statement about the ends, we have to show that for every end the maximum number of edge-disjoint arcs in $\overline{C}$ ending in this end is two. We prove first that the degree of every end must be at least two in $\overline{C}$. Afterwards, we show that the degree of every end is less than or equal to two in $\overline{C}$.

We already know by Claim~$2$ that the standard subspace $\overline{C}$ is topologically connected. So $\overline{C}$ is also arc-connected by Lemma~\ref{arc_conn}. Therefore, no end has degree zero in $\overline{C}$. Additionally, we know by Claim~$2$ that every end has even degree in $\overline{C}$. Combining these facts, the degree of every end must be at least two in $\overline{C}$.

Now let us fix an arbitrary end $\omega$ of $G$. In order to bound the degree of $\omega$ in $\overline{C}$ from above by two, we use the cuts $\delta(M^i_{j})$. We know by condition~(iii) that there exists a function $f$ such that $M_{\omega} = \lbrace \omega \rbrace$ holds. We prove first that for every arc $\alpha$ in $|G|$ which ends in $\omega$ there exists an integer $i'$ such that $\alpha$ uses an edge of $\delta(M^i_{f(i)})$ for every $i \geq i'$.
It is an easy consequence of Lemma~\ref{jumping-arc} that every arc that does not only consist of inner points of edges must contain a vertex.
So we fix a vertex $v$ of $\alpha$. Now choose $i'$ such that $v$ does not lie in $M^{i}_{f(i)}$ for any $i \geq i'$. This is possible because of condition~(iii). We know by condition~(ii) that the cut $\delta(M^{i}_{f(i)})$ is finite for every $i \geq i'$. Now $\alpha$ is an arc such that $v$ is on one side of the finite cut $\delta(M^{i}_{f(i)})$ and $\omega$ is in the closure of the other side for every $i \geq i'$. So by Lemma~\ref{jumping-arc}, the arc $\alpha$ must use one of the edges of $\delta(M^{i}_{f(i)})$ for every $i \geq i'$. Since $C$ contains only two edges of the cut $\delta(M^{i}_{f(i)})$ for every $i \geq 1$ by condition~(v), there can be at most two edge-disjoint arcs in $\overline{C}$ that end in $\omega$. This completes the proof of Claim~3.
\newline

As mentioned before, we can use Claim~2 and Claim~3 to deduce from Lemma~\ref{circ} that $\overline{C}$ is a circle in $|G|$. Furthermore, $\overline{C}$ is a Hamilton circle since $\overline{C}$ contains all vertices of $G$ as we have seen before.
\end{proof}

\section{Locally connected claw-free graphs}

We begin this section with a lemma that contains the essence of the proof of Theorem~\ref{Ob_Su} which Oberly and Sumner presented in \cite[Thm.\ 1]{ObSu}.

\begin{lemma}\label{Ob_Su-enlarge}
Let $G$ be a locally connected claw-free graph and $C$ be a cycle in $G$. Furthermore, let $v \in N(C)$ and $u \in N(v) \cap V(C)$. Then one of the following holds:
\textnormal{
\begin{enumerate}[\normalfont(i)]
\item \textit{For some $x \in \lbrace u^+, u^- \rbrace$, there exists a $v$--$x$ path $P_x$ in $G$ whose vertices lie completely in $N(u)$ such that $V(P_x) \cap \lbrace u^+, u^- \rbrace = \lbrace x \rbrace$ and for every vertex $z \in V(\mathring{P_x}) \cap V(C)$ the relations $V(P_x) \cap \lbrace z^+, z, z^- \rbrace = \lbrace z \rbrace$ and
$z^+z^- \in E(G)$ hold.}
\item \textit{The vertices $u^+$ and $u^-$ are adjacent in $G$ and there exists a certain vertex ${w \in (N(u) \cap V(C)) \setminus \lbrace u^+, u^- \rbrace}$ together with a $v$--$w$ path $P_w$ in $G$ whose vertices lie completely in $N(u)$ such that $u$ is adjacent to $w^+$ or $w^-$, the vertices $u^+, u^-, w^+, w^-$ do not lie on $P_w$ and for every vertex $q \in V(\mathring{P_w}) \cap V(C)$ the relations $V(P_w) \cap \lbrace q^+, q, q^- \rbrace = \lbrace q \rbrace$ and $q^+q^- \in E(G)$ hold.}
\end{enumerate}
}
\end{lemma}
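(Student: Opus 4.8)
The plan is to use local connectedness to route a path through $N(u)$ from $v$ to $\{u^+,u^-\}$, and then to repair that path — using claw-freeness at $u$ and at the vertices where the path meets $C$ — until it has the structure required in (i); the situations in which this repair is obstructed are precisely the ones producing outcome (ii).

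First I would set up the path. Since $C$ is a cycle we have $u^+\neq u^-$, since $v\notin V(C)$ we have $v\neq u^+,u^-$, and $v,u^+,u^-$ all lie in $N(u)$. As $G$ is locally connected, $G[N(u)]$ is connected, so there is a path in $G[N(u)]$ from $v$ to $\{u^+,u^-\}$; choose one, $P=p_0p_1\cdots p_m$ with $p_0=v$, of minimum length. Minimality forces that $P$ is induced in $G[N(u)]$, hence chordless in $G$ (a chord would shorten it), and that $p_m$ is the only vertex of $P$ in $\{u^+,u^-\}$; write $x:=p_m$. If $v$ is adjacent to $u^+$ or to $u^-$, then $P$ is the single edge $vx$, which has no inner vertices and therefore trivially satisfies all requirements of (i) with $P_x:=P$; so from now on $v$ is adjacent to neither $u^+$ nor $u^-$, and then, since $G[u,v,u^+,u^-]$ is not an induced claw, $u^+u^-\in E(G)$ and $m\geq 2$.

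Next I would classify each inner vertex $z\in V(\mathring{P})\cap V(C)$. Such a $z$ is distinct from $u,u^+,u^-$ and is adjacent to each of $u,z^+,z^-$; as $G[z,u,z^+,z^-]$ is not an induced claw, one of $z^+z^-$, $uz^+$, $uz^-$ is an edge. Call $z$ \emph{nice} if $z^+z^-\in E(G)$ and neither $z^+$ nor $z^-$ lies on $P$. If every inner $C$-vertex of $P$ is nice, then $P$ itself witnesses (i) with $P_x:=P$. Otherwise let $z=p_i$ be the inner $C$-vertex of $P$ of least index that is not nice. Using niceness of those among $p_1,\dots,p_{i-1}$ that lie on $C$ (and $p_0=v\notin V(C)$) one gets $p_{i-1}\notin\{z^+,z^-\}$, and then chordlessness gives that none of $p_0,\dots,p_i$ equals $z^+$ or $z^-$; also none equals $u^+$ or $u^-$ since $i<m$. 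Now if $z^+z^-\notin E(G)$, the claw condition at $z$ forces $u$ to be adjacent to $z^+$ or to $z^-$, and then $w:=z$ together with $P_w:=p_0\cdots p_i$ (whose inner $C$-vertices are among the nice vertices $p_1,\dots,p_{i-1}$, hence bypassable, and which avoids $u^+,u^-,z^+,z^-$) yields outcome (ii). The remaining possibility is $z^+z^-\in E(G)$ with niceness failing because $z^+$ or $z^-$ lies on $P$ (by chordlessness, forced to be $p_{i+1}$); here I would follow $P$ along the maximal stretch $p_i,p_{i+1},\dots$ that runs along $C$ and, by repeatedly invoking claw-freeness at those vertices together with the edge $u^+u^-$ and the chordlessness of $P$, extract the required vertex $w$ on $C$ and the path $P_w$.

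The main obstacle is exactly this last case. When the shortest path $P$ is forced to traverse edges of $C$, the naive truncation $p_0\cdots p_i$ fails condition (ii) because one of $w^+,w^-$ then lands on $P_w$; circumventing this appears to need either a strengthened minimality on $P$ (for instance, among shortest paths also minimizing the number of edges of $P$ lying on $C$) or a dedicated local analysis at the first vertex at which $P$ meets $C$, showing that claw-freeness there supplies an alternative adjacency of $u$ and hence an admissible $w$. I expect the bulk of the work, and essentially all of the genuinely combinatorial content, to lie in making this step precise.
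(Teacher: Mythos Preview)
Your approach is essentially the paper's, and your ``main obstacle'' is not an obstacle at all: you have already established everything needed to dispose of it. In the remaining case, niceness of $z=p_i$ fails because one of $z^+,z^-$ lies on $P$, and by chordlessness you yourself observe that this vertex must be $p_{i+1}$. But $p_{i+1}\in V(P)\subseteq N(u)$, so $u$ is adjacent to that one of $z^+,z^-$. Combined with your already-proved facts that $z^+,z^-\notin\{p_0,\ldots,p_i\}=V(P_w)$ and $u^+,u^-\notin V(P_w)$ (since $i<m$), setting $w:=z$ and $P_w:=p_0\cdots p_i$ yields outcome~(ii) immediately, by exactly the verification you gave in your first subcase. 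No ``stretch along $C$'' and no refined minimality is needed; your worry that ``one of $w^+,w^-$ lands on $P_w$'' is unfounded, since the offending vertex is $p_{i+1}$, which lies on $P$ but not on the truncation $P_w$.

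The paper's proof follows the same outline with a slightly cleaner dichotomy: instead of ``nice'' it calls $z\in V(\mathring{P_x})\cap V(C)$ \emph{singular} when neither $z^+$ nor $z^-$ lies in $N(u)$. Since $V(P_x)\subseteq N(u)$, singularity automatically forces $z^+,z^-\notin V(P_x)$ and (via the claw at $z$) also $z^+z^-\in E(G)$, while non-singularity of the first offending vertex $w$ directly gives $u\in N(w^+)\cup N(w^-)$. This collapses your two subcases into one and makes the shortest-path/chordlessness hypothesis unnecessary: any $v$--$u^+$ path in $N(u)$, truncated at its first hit of $\{u^+,u^-\}$, works.
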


\begin{proof}
Since $G$ is locally connected, there exists a $v$--$u^+$ path $Q$ whose vertices lie entirely in $N(u)$. Let $x$ be the first vertex on $Q$, in the direction from $v$ to $u^+$, which lies in $\lbrace u^+, u^- \rbrace$. Now we set $P_x = vQx$.

Before we proceed, we define the following notation. A vertex ${z \in V(\mathring{P_x}) \cap V(C)}$ is called \textit{singular} if neither $z^+$ nor $z^-$ is a neighbour of $u$. Note that the vertices $z^+$ and $z^-$ are adjacent in $G$ for every singular vertex ${z \in V(\mathring{P_x}) \cap V(C)}$ because otherwise $G[z, z^+, z^-, u]$ is a claw, which contradicts the assumption on $G$.

Now we distinguish two cases:

\setcounter{case}{0}
\begin{case}
Every vertex in $V(\mathring{P_x}) \cap V(C)$ is singular.
\end{case}

In this case, we know that the edge $z^-z^+$ has to be present for every vertex ${z \in V(\mathring{P_x}) \cap V(C)}$ as noted above. Now the objects $x$ and $P_x$ verify that statement~(i) of the lemma is true.

\begin{case}
There exists a vertex in $V(\mathring{P_x}) \cap V(C)$ which is not singular.
\end{case}

We may assume that $u^-$ and $u^+$ are adjacent because otherwise the edge $vu^-$ or the edge $vu^+$ must exist to avoid that $G[u, v, u^-, u^+]$ is a claw. Every such edge corresponds to another path $P_y$ with $V(\mathring{P_y}) \cap V(C) = \emptyset$ which we could use instead of $P_x$. Hence, we would be done by Case~1.

Let $w$ be the first vertex in $V(\mathring{P_x}) \cap V(C)$ which is not singular by traversing $P_x$ and starting at $v$. Since $w$ lies in $V(\mathring{P_x}) \cap V(C)$, it cannot be equal to $u^+$ or $u^-$. Additionally, $w^+$ or $w^-$ is adjacent to $u$ because $w$ is not singular. Now set $P_w = vP_xw$. Since $V(P_x) \cap \lbrace u^+, u^- \rbrace = \lbrace x \rbrace$ and $x$ is an endvertex of $P_x$, we get that neither $u^+$ nor $u^-$ lies on $P_w$. Furthermore, we get that $V(P_w)$ does neither contain $w^+$ nor $w^-$ and each vertex $q \in V(\mathring{P_w}) \cap V(C)$ is singular because $w$ has been chosen as the first vertex in $V(\mathring{P_x}) \cap V(C)$ which is not singular by traversing $P_x$ and starting at $v$. To prove that statement~(ii) of the lemma holds in this case, it remains to show that the edge $q^+q^-$ is present for every vertex $q \in V(\mathring{P_w}) \cap V(C)$. By the choice of $w$, we know that every vertex $q \in V(\mathring{P_w}) \cap V(C)$ is a singular vertex in $V(\mathring{P_x}) \cap V(C)$. So all required edges are present by the remark from above.
\end{proof}

To each of the statements~(i) and (ii) of Lemma~\ref{Ob_Su-enlarge} corresponds a cycle that contains the vertex $v$ and all vertices of $C$. In statement~(i), we get such a cycle using $C$ where we replace the path $z^-zz^+$ in $C$ by the edge $z^-z^+$ for every vertex $z \in V(\mathring{P_x}) \cap V(C)$ and the edge $ux$ of $C$ by the path $uvP_xx$. In statement~(ii), we take $C$ and replace the path $u^-uu^+$ of $C$ by the edge $u^-u^+$, the edge $yw$ of $C$ by the path $yuvP_ww$ for some $y \in \lbrace w^+, w^- \rbrace \cap N(u)$ and each path $q^-qq^+$ in $C$ by the edge $q^-q^+$ for every vertex $q \in V(\mathring{P_w}) \cap V(C)$. We call each of these resulting cycles a \textit{path extension of} $C$.
A finite sequence of cycles $(C_i)$, where $0 \leq i \leq n$ for some $n \in \mathbb{N}$, is called a \textit{path extension sequence of} $C$ if $C_0 = C$ and $C_i$ is a path extension of $C_{i-1}$ for every $i \in \lbrace 1, \ldots, n \rbrace$. A path $P_x$ or $P_w$ as in statement~(i) or (ii), respectively, of Lemma~\ref{Ob_Su-enlarge} is called \textit{extension path}. The vertices $v$ and $u$ from Lemma~\ref{Ob_Su-enlarge} are called \textit{target} and \textit{base}, respectively, of the path extension (see Figure~\ref{path-extensions}).

\begin{figure}[htbp]
\centering
\includegraphics[width=4.5cm]{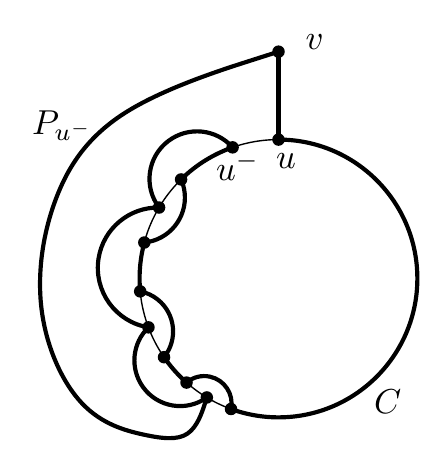}
\hspace{1cm}
\includegraphics[width=4.5cm]{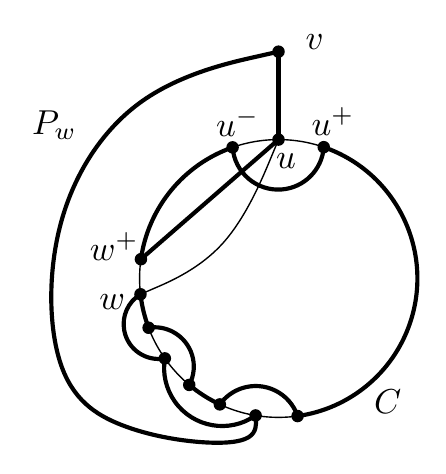}
\caption{Path extensions of a cycle $C$ with target $v$, base $u$ and extension paths $P_{u^-}$ and $P_{w}$, respectively.}
\label{path-extensions}
\end{figure}

Note that Theorem~\ref{Ob_Su} can be easily deduced from Lemma~\ref{Ob_Su-enlarge} together with Proposition~\ref{2-conn}.

Now we turn towards the proof of Theorem~\ref{Inf_Ob_Su}, which is the main result of this paper. The plan to prove this theorem is to construct a sequence of cycles together with certain vertex sets carefully such that we obtain a Hamilton circle as a limit object of the sequence of cycles using Lemma~\ref{HC-extract}. Lemma~\ref{Ob_Su-cut-1} will be the main tool for constructing such objects and relies on path extensions of cycles, which we get as in Lemma~\ref{Ob_Su-enlarge}. Before we turn to Lemma~\ref{Ob_Su-cut-1}, we state a definition which captures when a cycle and vertex sets are appropriate for our purpose and prove a technical lemma about these objects. In the following, we will often deal with sets like $N_k(N(C))$ where $k \geq 1$ and $C$ is some cycle. Therefore, let us recall the definition of sets of this form. The set $N_k(N(C))$ consists of those vertices whose distance to $C$ is at least $2$ and at most $k+1$ together with those vertices which lie on $C$ but whose distance to $N(C)$ is at least $1$ and at most $k$. Note that, in general, $V(C)$ does not have to be a subset of $N_k(N(C))$.

Let $G = (V, E)$ be an infinite locally finite, connected, claw-free graph, $C$ be a cycle of $G$ and $\mathscr{S} \subseteq N(C)$ be a minimal vertex set such that every ray starting in $C$ meets $\mathscr{S}$. Furthermore, let $k$, $S_j$ and $K_j$ be defined as in Lemma~\ref{struct_toll}, $f$ be a permutation of $\lbrace 1, \ldots, k \rbrace$ and $m$ be an integer which satisfies ${0 \leq m \leq k}$. Now we call a tuple $(D, M_{f(1)}, \ldots, M_{f(m)})$ \textit{good} if the following properties hold for every $j \in \lbrace f(1), \ldots, f(m) \rbrace$:
\vspace{4 pt}
\textnormal{
\begin{enumerate}[\normalfont(a)]
\item \textit{$D$ is a cycle of $G$ which contains all vertices of $C$. Moreover, $D$ contains all vertices of $S_{i} \cup (N_3(S_{i}) \cap V(K_{i}))$ for each $i \in \lbrace f(1), \ldots, f(m) \rbrace$.}
\item \textit{${V(K_j) \subseteq M_j \subseteq (V \setminus V(C)) \cup N_2(N(C))}$.}
\item \textit{${|E(D) \cap \delta(M_{j})| = 2}$.}
\item \textit{All vertices of $M_j$ lie on $D$ or in $V \setminus (N_4(K_0) \cup V(K_0))$.}
\item \textit{$G[M_j]$ is connected.}
\item \textit{$M_j$ contains either no or all vertices of $K_p$ for each $p \in \lbrace 1, \ldots, k \rbrace$.}
\end{enumerate}
}
\vspace{4 pt}

We continue with a technical lemma which shows how to get a new good tuple from an old one using path extensions (see Figure~\ref{Ob_Su_cut_pic_1} for a sketch of the situation).

\begin{lemma}\label{good_extensions}
Let $G = (V, E)$ be an infinite locally finite, connected, locally connected, claw-free graph, $C$ be a cycle of $G$ which has a vertex of distance at least $3$ \linebreak to $N(C)$ and $\mathscr{S} \subseteq N(C)$ be a minimal vertex set such that every ray starting in $C$ meets $\mathscr{S}$. Furthermore, let $k$, $S_j$ and $K_j$ be defined as in Lemma~\ref{struct_toll}, $f$ be \linebreak a permutation of $\lbrace 1, \ldots, k \rbrace$ and $m$ be an integer which satisfies ${0 \leq m \leq k}$. If $(D, M_{f(1)}, \ldots, M_{f(m)})$ is a good tuple, then $(D', M'_{f(1)}, \ldots, M'_{f(m)})$ is a good tuple too where $D'$ is a path extension of $D$ with extension path $P_z$ whose \linebreak endvertex different from the target let be $z$ and whose base let be $u$ such that \linebreak ${V(P_z) \cup \lbrace u \rbrace \subseteq V(K_0) \setminus V(C) \cup \mathscr{S} \cup (N_2(N(C)) \cap V(K_0))}$, and $M'_j$ is defined as follows for every $j \in \lbrace f(1), \ldots, f(m) \rbrace$:
\[
M'_{j} = 
\begin{cases}
M_{j} \cup V(P_z) \cup \lbrace u \rbrace &\mbox{if } z \in M_{j} \\
M_{j} \setminus (V(P_z) \cup \lbrace u \rbrace) & \mbox{otherwise}.
\end{cases}
\]
\end{lemma}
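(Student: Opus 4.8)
The plan is to verify each of the six conditions~(a)--(f) in the definition of a good tuple for $(D', M'_{f(1)}, \ldots, M'_{f(m)})$, using that they hold for $(D, M_{f(1)}, \ldots, M_{f(m)})$. Fix $j \in \lbrace f(1), \ldots, f(m) \rbrace$ and abbreviate $M := M_j$, $M' := M'_j$ and $R := V(P_z) \cup \lbrace u \rbrace$, so $M' = M \cup R$ if $z \in M$ and $M' = M \setminus R$ otherwise. First recall the shape of a path extension, as described after Lemma~\ref{Ob_Su-enlarge}: $D'$ is again a finite cycle with $V(D') = V(D) \cup V(P_z)$, and $D'$ is obtained from $D$ by deleting a set $B \subseteq (V(\mathring{P_z}) \cap V(D)) \cup \lbrace u \rbrace$ of vertices (each replaced by the edge joining its two $D$-neighbours), yielding a cycle $\widetilde{D}$ on $V(D) \setminus B$, and then replacing a single edge $e^{\ast}$ of $\widetilde{D}$, joining two consecutive vertices of $\widetilde{D}$, by a path whose interior vertices all lie in $R$ and whose two ends are the ends of $e^{\ast}$. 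From $\mathscr{S} \subseteq N(C)$, Lemma~\ref{struct_toll} and the hypothesis on $P_z$ one obtains $B \subseteq R \subseteq (V(K_0) \cup \mathscr{S}) \cap \bigl((V \setminus V(C)) \cup N_2(N(C))\bigr)$; in particular $R$ is disjoint from every infinite component $K_p$.

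Conditions~(a), (b), (d), (f) are then immediate. For~(a): $D'$ is a cycle and $V(D') \supseteq V(D)$, so $D'$ still contains $V(C)$ and all the sets $S_i \cup (N_3(S_i) \cap V(K_i))$. For~(f): $M' \cap V(K_p) = M \cap V(K_p)$ for every $p$ since $R$ is disjoint from every $K_p$, so~(f) for $M$ gives~(f) for $M'$. For~(b): the inclusion $V(K_j) \subseteq M'$ holds because $V(K_j) \subseteq M$ and $R \cap V(K_j) = \emptyset$, and the inclusion $M' \subseteq (V \setminus V(C)) \cup N_2(N(C))$ holds because $M$ lies in this set by~(b) for $M$ and $R$ does too. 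For~(d): any vertex of $M'$ either lies in $M$, hence on $D \subseteq D'$ or in $V \setminus (N_4(K_0) \cup V(K_0))$ by~(d) for $M$, or it lies in $R$, hence on the path spliced into $e^{\ast}$, hence on $D'$.

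The substantial point is condition~(c): $|E(D') \cap \delta(M')| = 2$. I would use that, for a finite cycle $D'$ and a set $M'$, one has $|E(D') \cap \delta(M')| = 2$ exactly when $M' \cap V(D')$ is a nonempty proper subset of $V(D')$ that induces a connected subgraph of $D'$, i.e.\ is a single arc of the cycle. Nonemptiness: by~(a), $N_3(S_j) \cap V(K_j) \subseteq V(D')$, and this set is nonempty and is contained in $M'$ by the already-verified~(b). Properness: the hypothesis provides a vertex $c$ of $C$ at distance at least $3$ from $N(C)$; then $c \in V(C) \subseteq V(D')$ but $c \notin (V \setminus V(C)) \cup N_2(N(C)) \supseteq M'$, so $c \in V(D') \setminus M'$. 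Connectedness is the real work. By~(c) for $M$, the set $A := M \cap V(D)$ is a single arc of $D$; deleting $B$ leaves $A \setminus B$ a single arc of $\widetilde{D}$ (a contiguous block of a cycle stays contiguous when other vertices are contracted away), nonempty because $V(K_j) \cap V(D) \subseteq A \setminus B$. Now $M' \cap V(D')$ arises from $A \setminus B$ by changes confined to the two ends of $e^{\ast}$ — inserting or deleting $u$ and $z$, and, in case~(ii) of Lemma~\ref{Ob_Su-enlarge}, the auxiliary vertex $y$, according to whether $z \in M$ — together with adjoining, or not, the entire path spliced into $e^{\ast}$; this path is essentially monochromatic for $M'$ (contained in $M'$ if $z \in M$, disjoint from $M'$ if $z \notin M$, with at most the one vertex $y$ of case~(ii) behaving otherwise). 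A short case analysis over the case~(i)/(ii) distinction of Lemma~\ref{Ob_Su-enlarge}, over $z \in M$ versus $z \notin M$, and over the position of $u$ relative to the arc $A \setminus B$ — using also that $c \notin M'$ — then shows that $M' \cap V(D')$ is again a single arc of $D'$: in each case the modification touches only one end of that arc and reroutes it along the spliced-in path without producing a second arc, and the vertex $c$ prevents it from closing up into all of $V(D')$. Organising this case analysis cleanly is the main obstacle of the proof; the rest is bookkeeping.

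It remains to check condition~(e), that $G[M']$ is connected. If $z \in M$, then $M' = M \cup R$; here $G[R]$ is connected because $P_z$ is a path all of whose vertices lie in $N(u)$, so $u$ is adjacent to each of them, and since $z \in M \cap R$ the graph $G[M']$ is the union of the two connected graphs $G[M]$ and $G[R]$ sharing the vertex $z$, hence connected. If $z \notin M$, then $M' = M \setminus R$, and I would invoke~(c), already proved: the set $A' := M' \cap V(D')$ is connected and nonempty. Any vertex of $M' \setminus V(D')$ lies in $V \setminus (N_4(K_0) \cup V(K_0))$ by~(d), hence in some infinite component $K_p$; by~(f) the whole of $V(K_p)$ lies in $M$, and since $R$ is disjoint from $K_p$ the whole of $V(K_p)$ lies in $M'$. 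Moreover $K_p$ contains a vertex within distance $2$ of $K_0$ — a neighbour in $K_p$ of some vertex of $S_p$, which in turn has a neighbour in $K_0$ by Lemma~\ref{struct_toll} — and that vertex lies on $D$ by~(d), hence in $A' \cap V(K_p)$. Thus $G[V(K_p)]$ is connected and meets the connected set $A'$; as $M'$ is the union of $A'$ with all such $V(K_p)$, the graph $G[M']$ is connected.
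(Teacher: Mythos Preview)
Your verification of (a), (b), (d), (e), and (f) follows the paper's proof closely and is correct; the only real divergence is in (c). There the paper does not use the single-arc characterisation at all. Instead it argues directly with edges: since every vertex of $R$ lies on the same side of $\delta(M'_j)$, no edge of $P_z$ (nor the edge joining $u$ to the path) can lie in $E(D')\cap\delta(M'_j)$; hence every edge of $E(D')\cap\delta(M'_j)$ is either already an edge of $D$ or is one of the new chords $v^-v^+$ with $v\in V(\mathring P_z)$ or $v=u$. For each such chord in the cut, the endpoints $v^-,v^+$ lie outside $R$, so their membership in $M'_j$ agrees with their membership in $M_j$, whence one of the deleted edges $v^-v$, $vv^+$ already lay in $E(D)\cap\delta(M_j)$. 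This yields $|E(D')\cap\delta(M'_j)|\le|E(D)\cap\delta(M_j)|=2$ directly, uniformly over the two types of path extension and over whether $z\in M$.

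Your route via the single-arc picture is valid and arguably more illuminating structurally --- it makes visible \emph{why} the cut stays at size two, and it feeds naturally into your argument for~(e) --- but it trades the paper's short uniform bound for a case analysis that you only sketch. The sentence ``a short case analysis \ldots\ then shows that $M'\cap V(D')$ is again a single arc'' is where the content lives, and actually carrying it out means tracking, in each of cases~(i) and~(ii) of Lemma~\ref{Ob_Su-enlarge} and in each of the subcases $z\in M$ and $z\notin M$, how the endpoint of the arc $A\setminus B$ adjacent to $e^\ast$ moves when the spliced path is inserted (and, in case~(ii), how the vertex $y\notin R$ sits relative to the arc). This is genuinely doable --- I checked the subcases and none fails --- but it is several honest subcases, not bookkeeping; the paper's edge-counting bypasses all of it. One small slip: in your argument for~(e) you write ``lies on $D$ by~(d)''; since you are invoking~(d) for the new tuple this should be $D'$ (the conclusion is unaffected because $V(D)\subseteq V(D')$).
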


\begin{proof}
We fix an arbitrary integer ${j \in \lbrace f(1), \ldots, f(m) \rbrace}$ and check that the tuple ${(D', M'_{f(1)}, \ldots, M'_{f(m)})}$ is good. Property~(a) is valid since ${(D, M_{f(1)}, \ldots, M_{f(m)})}$ is a good tuple and $D'$ is a path extension of $D$.

We use the inclusion ${V(P_z) \cup \lbrace u \rbrace \subseteq V(K_0) \setminus V(C) \cup \mathscr{S} \cup (N_2(N(C)) \cap V(K_0))}$, which holds by assumption, to verify property~(b). Since $\mathscr{S}$ is a subset of $N(C)$ and the inclusion ${V(K_j) \subseteq M_j \subseteq (V \setminus V(C)) \cup N_2(N(C))}$ holds because the tuple ${(D, M_{f(1)}, \ldots, M_{f(m)})}$ is good, the definition of $M'_j$ implies property~(b).

Note for property~(c) that $D'$ has vertices in $M'_j$ and $V \setminus M'_j$. To see this, we use property~(a) and know therefore that $D'$ contains vertices with distance at least $3$ to $N(C)$ in $N_3(\mathscr{S}) \cap V(K_j)$ as well as in $V(C) \setminus N_2(N(C))$. These vertices lie in $M'_j$ and $V \setminus M'_j$, respectively, because of statement~(b). So the cycle $D'$ hits the cut $\delta(M'_j)$ at least twice. Next we show that $D'$ hits $\delta(M'_j)$ at most twice. Note first that neither the edge $uz$ nor any edge of $P_z$ lies in $E(D') \cap \delta(M'_j)$ by definition of the set $M'_j$. Hence, each edge of $E(D') \cap \delta(M'_j)$ is either also an edge of $D$ or of the type $v^{-}v^{+}$ with $v \in V(\mathring{P_z})$ or with $v = u$. Let us look  more closely at such an edge where $v \in V(\mathring{P_z})$. We know that each of the two vertices $v^-$ and $v^+$ lies in $M'_j$ if and only if it lies in $M_j$ by definition of $M'_j$ and since neither of those two vertices lie on $P_z$. So $v^{-}v^{+} \in \delta(M_j)$ is true as well. Therefore, we know that one of the edges $v^{-}v$ and $vv^{+}$ is an element of $E(D) \cap \delta(M_j)$, but it was deleted from $D$ to obtain $D'$ by definition of path extension. A similar observation can be made if $v = u$ using additionally that the edge $u^{-}u^{+}$ is only contained in $D'$ if the extension path $P_z$ contains neither $u^{-}$ nor $u^+$. Putting these observations together, we get that the inequality chain ${|E(D') \cap \delta(M'_j)| \leq |E(D) \cap \delta(M_{j})| = 2}$ holds where the equality follows since ${(D, M_{f(1)}, \ldots, M_{f(m)})}$ is a good tuple.

For property~(d) we use that $D'$ is a path extension of $D$. So it contains all vertices of $D$ and of $V(P_z) \cup \lbrace u \rbrace$. Now the statement follows because all vertices in $M_{j}$ which have distance at most $4$ to $K_0$ are vertices of $D$ by the assumption that ${(D, M_{f(1)}, \ldots, M_{f(m)})}$ is a good tuple.

Property~(e) is obviously valid if $M'_j = M_{j} \cup V(P_z) \cup \lbrace u \rbrace$ using that $G[M_{j}]$ is connected. So let us consider the case where $M'_j = M_{j} \setminus (V(P_z) \cup \lbrace u \rbrace)$. Combining property~(c) and (d), we get that all vertices in $M'_j$ with distance at most $4$ to $K_0$ lie on a path $P$ in $G[M'_j]$ which is induced by the cycle $D'$. We know by assumption that the inclusion ${V(P_z) \cup \lbrace u \rbrace \subseteq V(K_0) \setminus V(C) \cup \mathscr{S} \cup (N_2(N(C)) \cap V(K_0))}$ holds. Hence, all vertices in $N(V(P_z) \cup \lbrace u \rbrace) \cap M'_j$ lie on the path $P$. Using that $G[M_{j}]$ is connected, we know that each component of $G[M'_j]$ contains a vertex of ${N(V(P_z) \cup \lbrace u \rbrace) \cap M'_j}$. Now the path $P$ ensures that $G[M'_j]$ consists of precisely one component, which means that $G[M'_j]$ is connected.

To show that property~(f) holds, we use that the corresponding property with $M_{j}$ instead of $M'_j$ is valid. By the assumption on the extension path, we get the inclusion ${V(P_z) \cup \lbrace u \rbrace \subseteq V(K_0) \setminus V(C) \cup \mathscr{S} \cup (N_2(N(C)) \cap V(K_0)) \subseteq V(K_0) \cup \mathscr{S}}$. Now the definition of $M'_j$ implies that property~(f) holds.
\end{proof}

Using Lemma~\ref{good_extensions} we prove the following lemma which, together with Lemma~\ref{HC-extract}, will be the key for the proof of the main theorem.

\begin{lemma}\label{Ob_Su-cut-1}
Let $G = (V, E)$ be an infinite locally finite, connected, locally connected, claw-free graph, $C$ be a cycle of $G$ which has a vertex of distance at least $3$ to $N(C)$ and $\mathscr{S} \subseteq N(C)$ be a minimal vertex set such that every ray starting in $C$ meets $\mathscr{S}$. Furthermore, let $k$, $S_j$ and $K_j$ be defined as in Lemma~\ref{struct_toll}. Then there exists a cycle $C'$ with the properties:
\textnormal{
\begin{enumerate}[\normalfont(i)]
\item \textit{$V(K_0) \cup \mathscr{S} \cup N_3(\mathscr{S}) \subseteq V(C')$.}
\item \textit{There are vertex sets $M_1, \ldots, M_k \subseteq V$ such that the tuple $(C', M_1, \ldots, M_k)$ is good.}
\item \textit{$E(C - N_2(N(C))) \subseteq E(C')$ and the inclusion $\lbrace u, v \rbrace \subseteq (V \setminus V(C)) \cup N_3(N(C))$ holds for every edge $uv \in E(C') \setminus E(C)$.}
\end{enumerate}
}
\end{lemma}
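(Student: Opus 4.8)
The plan is to build $C'$ and the sets $M_1,\ldots,M_k$ by starting from $C$ (with empty tuple of $M$'s) and repeatedly applying path extensions provided by Lemma~\ref{Ob_Su-enlarge}, maintaining goodness via Lemma~\ref{good_extensions}. The overall strategy splits into two phases. In the first phase I want to absorb, one infinite component $K_i$ at a time, all the vertices that property~(i) of the conclusion demands, namely $S_i\cup(N_3(S_i)\cap V(K_i))$ for each $i$; in the second phase I want to absorb the remaining finitely many vertices of $V(K_0)\cup N_3(\mathscr S)$ that do not yet lie on the current cycle, so that property~(i) is fully met and meanwhile property~(iii) is preserved.

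Concretely, I would proceed by induction on $m$ from $0$ to $k$ along a permutation $f$ of $\{1,\ldots,k\}$, producing at step $m$ a good tuple $(D_m,M_{f(1)},\ldots,M_{f(m)})$ with the extra feature that $D_m\supseteq S_{f(\ell)}\cup(N_3(S_{f(\ell)})\cap V(K_{f(\ell)}))$ for $\ell\le m$. For the step $m\to m+1$, write $j=f(m+1)$; by Lemma~\ref{struct_toll} every vertex of $S_j$ has a neighbour in $K_0$ and in $K_j$, and $S_j$ is a minimal separator, so by Lemma~\ref{complete} the relevant neighbourhoods inside $K_j$ are complete, which is exactly the local structure needed to keep the finitely many vertices of $S_j\cup(N_3(S_j)\cap V(K_j))$ absorbable by a \emph{finite} path extension sequence whose extension paths and bases all stay inside $V(K_0)\setminus V(C)\cup\mathscr S\cup(N_2(N(C))\cap V(K_0))\cup V(K_j)$ — but I must be slightly careful: Lemma~\ref{good_extensions} only certifies goodness when the extension path together with its base lies in $V(K_0)\setminus V(C)\cup\mathscr S\cup(N_2(N(C))\cap V(K_0))$, so the extensions that reach \emph{into} $K_j$ have to be handled directly rather than via that lemma. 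This is where I expect the main obstacle: I must check by hand, as in the proof of Lemma~\ref{good_extensions}, that such a $K_j$-directed path extension keeps $|E(D)\cap\delta(M_{f(\ell)})|=2$ for every $\ell\le m$ with $\ell\neq$ the new index — the point being that an extension path wholly inside $K_j$ cannot meet $\delta(M_{f(\ell)})$ for $f(\ell)\neq j$, since by property~(f) those $M_{f(\ell)}$ contain either all or none of $V(K_j)$, and by property~(a)/(b) the cut $\delta(M_{f(\ell)})$ lives near $K_0$. Simultaneously one defines the new set $M_j$: take all of $V(K_j)$, together with the part of the new cycle running through $N_2(N(C))\cap V(K_j)$, chosen so that $D_{m+1}$ crosses $\delta(M_j)$ exactly twice (one crossing near each of the two "ends" of the absorbed segment, using a vertex of $C$ outside $N_2(N(C))$ on one side, guaranteed by the hypothesis that $C$ has a vertex of distance at least $3$ to $N(C)$, and a vertex of $N_3(S_j)\cap V(K_j)$ on the other). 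Connectedness of $G[M_j]$ (property~(e)) follows from connectedness of $K_j$ together with the fact that the absorbed segment is a subpath of a cycle; properties~(d) and~(f) are immediate from the construction.

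Once $m=k$, all the $M_j$ are defined and good, and $D_k$ contains $\mathscr S\cup\bigcup_{i}(N_3(S_i)\cap V(K_i))=\mathscr S\cup(N_3(\mathscr S)\setminus V(K_0))$. For the second phase, note $V(K_0)\cup(N_3(\mathscr S)\cap V(K_0))$ is finite, so finitely many of its vertices are still missing from $D_k$; I absorb them by a further finite path extension sequence whose extension paths and bases lie in $V(K_0)\setminus V(C)\cup\mathscr S\cup(N_2(N(C))\cap V(K_0))$ — here Lemma~\ref{good_extensions} applies verbatim and updates each $M_j$ by the case split in its statement, preserving goodness. The resulting cycle is $C'$; property~(i) holds by the two phases, property~(ii) holds since $(C',M_1,\ldots,M_k)$ is good, and property~(iii) holds because every edge we ever deleted from or added to a cycle during any path extension has, by Lemma~\ref{Ob_Su-enlarge}, both endpoints either off $C$ or within $N_3(N(C))$ (an edge $z^-z^+$ with $z\in V(C)$ at distance $\le 2$ from $N(C)$ has $z^\pm$ at distance $\le 3$; the bases and extension-path vertices are in $N_2(N(C))\cup\mathscr S\subseteq N_3(N(C))$ or off $C$), while edges of $C$ at distance $>2$ from $N(C)$ are never touched. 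The only genuine care needed is bookkeeping the "two crossings" invariant through the $K_j$-directed extensions, which is the technical heart of the argument.
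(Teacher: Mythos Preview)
Your two-phase outline matches the paper's architecture, and your use of Lemma~\ref{good_extensions} for the second ($K_0$-absorption) phase is exactly right. But the first phase --- entering each $K_j$ while establishing the two-crossings invariant for the new $M_j$ --- is where the real difficulty lies, and your sketch does not supply the mechanism. You propose to absorb $S_j\cup(N_3(S_j)\cap V(K_j))$ by a generic path extension sequence and then ``choose $M_j$ so that the cycle crosses $\delta(M_j)$ exactly twice''; but an uncontrolled extension sequence toward $S_j$ can pick up several vertices of $S_j$ (or stray vertices of some other not-yet-handled $S_p$), and once that happens there is no set $M_j$ of the required shape with only two crossings. Relatedly, you treat the permutation $f$ as given in advance, whereas in fact which $S_\ell$ becomes $S_{f(m+1)}$ is forced by the construction, not chosen.

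The paper's device is a two-step entry. First (its Claim~1) one performs a single path extension of $C_m$ with base in $V(C)$ and target in $\mathscr S\setminus\bigcup_{i\le m}S_{f(i)}$, but then \emph{truncates} the extension path at the last vertex $s$ it meets in that set; this yields a cycle $D_1$ meeting exactly one vertex of some $S_\ell$ and none of the other unhandled $S_p$'s, and one sets $f(m+1)=\ell$. Second (its Claim~2) one extends from base $s$ toward a neighbour of $s$ in $K_\ell$, again truncating carefully (using Lemma~\ref{complete} twice) to obtain $D_2$ containing exactly two vertices $s,t\in S_\ell$ which are moreover \emph{adjacent on $D_2$}. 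Now one sets $M_{f(m+1)}=S_\ell\cup V(K_\ell)$, replaces the edge $st$ by a path through a finite tree $T_\ell\subseteq K_\ell$ spanning $N_3(S_\ell)\cap V(K_\ell)$, and does further path extensions with bases in $V(T_\ell)$ and targets in $S_\ell\cup V(T_\ell)$; all of these changes live inside $G[S_\ell\cup V(K_\ell)]$, so the two crossings of $\delta(M_{f(m+1)})$ inherited from $D_2$ persist. The earlier $M_{f(i)}$'s are updated not via Lemma~\ref{good_extensions} but directly: one checks (using the ``both or neither of $s,t$'' property secured in Claim~2) that adjoining $S_\ell\cup V(K_\ell)$ to those $B_{f(i)}$ containing $s,t$ preserves goodness. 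Your proposal is missing this truncation trick and the adjacent-pair step, which are the substantive content of the argument; without them the two-crossings property cannot be established.
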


\begin{proof}
First we define inductively a sequence of $k+1$ cycles $(C_0, \ldots, C_k)$, a bijection ${f: \lbrace 1, \ldots, k \rbrace \longrightarrow \lbrace 1, \ldots, k \rbrace}$ and vertex sets $M^i_j$ for all integers $i \in \lbrace 1, \ldots, k \rbrace$ and $j \in \lbrace f(1), \ldots, f(i) \rbrace$ such that each cycle $C_i$ contains no vertices from any $S_{q}$ for ${q \in \lbrace 1, \ldots, k \rbrace \setminus \lbrace f(1), \ldots, f(i) \rbrace}$ and each tuple ${(C_i, M^i_{f(1)}, \ldots M^i_{f(i)})}$ is good. We start by setting $C_0 = C$. The tuple which consists only of $C$ and does not contain any further vertex sets is good because no requirements have to be fulfilled except that $C_0$ is a cycle that contains all vertices of $C$. Since $\mathscr{S} \subseteq N(C)$ holds by definition, we have checked everything for $i = 0$.

Now suppose we have already defined the sequence of cycles up to $C_m$ with $0 \leq m < k$, the values of $f(i)$ for every $i \in \lbrace 1, \ldots, m \rbrace$ and the sets $M^i_{j}$ for all $i \in \lbrace 1, \ldots, m \rbrace$ and ${j \in \lbrace f(1), \ldots, f(m) \rbrace}$. The definitions of the cycle $C_{m+1}$, of the value of $f(m+1)$ and of the sets $M^{m+1}_j$ for each $j \in \lbrace f(1), \ldots, f(m+1) \rbrace$ need some work. We state and prove two claims before we define these objects.

\setcounter{claim}{0}
\begin{claim}
There are an integer $\ell \in \lbrace 1, \ldots, k \rbrace \setminus \lbrace f(1), \ldots, f(m) \rbrace$, a path extension $D_1$ of $C_m$ which contains precisely one vertex from $S_{\ell}$ but no vertices from any $S_p$ with $p \in \lbrace 1, \ldots, k \rbrace \setminus \lbrace f(1), \ldots, f(m), \ell \rbrace$ and vertex sets $A_j$ for every ${j \in \lbrace f(1), \ldots, f(m) \rbrace}$ such that ${(D_1, A_{f(1)}, \ldots, A_{f(m)})}$ is a good tuple.
\end{claim}

First we pick a vertex $v \in S_{\ell'}$ as target of a path extension of $C_m$ where ${\ell' \in \lbrace 1, \ldots, k \rbrace \setminus \lbrace f(1), \ldots, f(m) \rbrace}$, which is possible since $\mathscr{S} \subseteq N(C)$. Let the vertex ${u \in V(C)}$ be the base of the extension and $P_z$ be the corresponding extension path with endvertices $v$ and $z$. Furthermore, let $s$ be the last vertex on $P_z$ which lies in $\mathscr{S} \setminus \bigcup^m_{i=1} S_{f(i)}$, say $s \in S_{\ell}$ with $\ell \in \lbrace 1, \ldots, k \rbrace \setminus \lbrace f(1), \ldots, f(m) \rbrace$. Now we consider the path extension $D_1$ of $C_m$ where we choose $u$ again as base but with $s$ as target together with the path $P'_{z} = sP_zz$ which we use as extension path (see Figure~\ref{Ob_Su_cut_pic_1}). This way it is ensured that $D_1$ contains precisely one vertex from $\mathscr{S} \setminus \bigcup^m_{i=1} S_{f(i)}$, \linebreak namely $s$.

\begin{figure}[htbp]
\centering
\includegraphics[width=\textwidth]{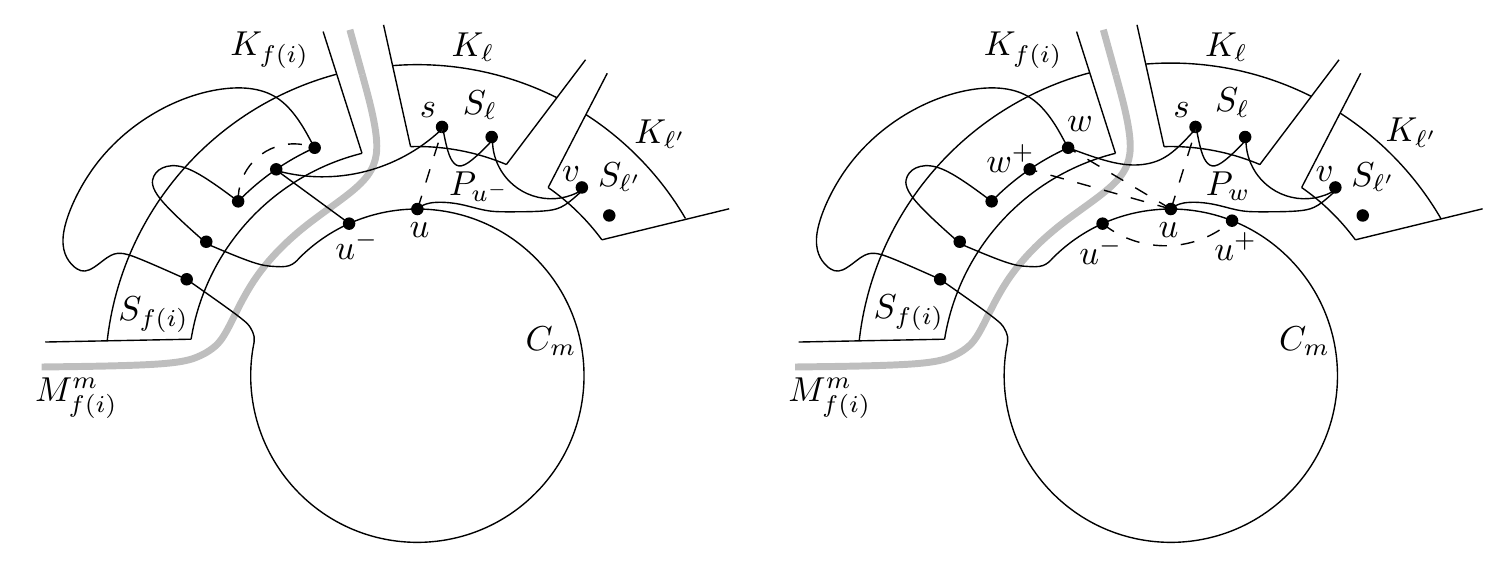}
\caption{Application of Lemma~\ref{good_extensions} with the good tuple $(C_m, M^m_{f(1)}, \ldots, M^m_{f(m)})$ and with the path $sP_{u^-}u^-$ or $sP_{w}w$ as extension path whose base and target are $u$ and $s$, respectively.}
\label{Ob_Su_cut_pic_1}
\end{figure}

To show that we get a good tuple, we apply Lemma~\ref{good_extensions} and define the sets $A_j$ for every integer ${j \in \lbrace f(1), \ldots, f(m) \rbrace}$ also as in Lemma~\ref{good_extensions} using $M^m_j$ and $P'_{z}$. This completes the proof of Claim~1. Note that the application of Lemma~\ref{good_extensions} is possible since $u$ was chosen from ${V(C) \subseteq V(K_0)}$ and has a neighbour in $\mathscr{S} \subseteq N(C)$. With the inclusion ${V(P'_z) \subseteq N(u)}$ we get that ${V(P'_z) \cup \lbrace u \rbrace \subseteq \mathscr{S} \cup (N_2(N(C)) \cap V(K_0))}$ holds.
\newline

For the rest of the proof of the lemma, we fix an integer $\ell$, a cycle $D_1$ and vertex sets $A_j$ for every ${j \in \lbrace f(1), \ldots, f(m) \rbrace}$ with the properties as in Claim~1. Now we proceed with the next claim, which uses these objects.

\begin{claim}
There are a path extension $D_2$ of $D_1$ which contains precisely two vertices from $S_{\ell}$ and these vertices are adjacent in $D_2$ but contains no vertices from any $S_p$ where $p$ lies in $\lbrace 1, \ldots, k \rbrace \setminus \lbrace f(1), \ldots, f(m), \ell \rbrace$ and vertex sets $B_j$ for every ${j \in \lbrace f(1), \ldots, f(m) \rbrace}$ such that ${(D_2, B_{f(1)}, \ldots, B_{f(m)})}$ is a good tuple and each $B_j$ contains either two vertices of $S_{\ell} \cap V(D_2)$ or no vertex of $S_{\ell} \cup V(K_{\ell})$.
\end{claim}

Let $s$ be the only vertex of $D_1$ which lies in $S_{\ell}$. Now we pick a neighbour $v$ of $s$ in $V(K_{\ell})$. There is one because $S_{\ell}$ is a minimal separator in $G$ and $K_{\ell}$ is one of the components of $G-S_{\ell}$ by Lemma~\ref{struct_toll}. Let $P_z$ be the extension path of a path extension of $D_1$ with target $v$ and base $s$ and let $v$ and $z$ be the endvertices of $P_z$. The path $P_z$ must contain vertices in $S_{\ell}$ because it starts in $v \in V(K_{\ell})$ and ends in $z$, which lies in another component of $G-S_{\ell}$ since $D_1$ contains only the vertex $s$ from $S_{\ell}$ and $V(P_z) \subseteq N(s)$. So let $t$ be the last vertex on $P_z$ starting at $v$ which is an element of $S_{\ell}$. Furthermore, let $w$ be the vertex after $t$ on $tP_zz$. By the choice of $t$, we know that $w$ and $z$ lie in the same component of $G-S_{\ell}$. Since $s$ lies in the minimal separator $S_{\ell}$ and $V(P_z) \subseteq N(s)$, we get by Lemma~\ref{complete} that $w$ and $z$ are adjacent. Now we define $D_2$ where we distinguish two cases.

\setcounter{case}{0}
\begin{case}
The vertex $w$ lies in $S_{j'}$ for some $j' \in \lbrace 1, \ldots, k \rbrace \setminus \lbrace f(1), \ldots, f(m) \rbrace$.
\end{case}

Since $D_1$ contains no vertices from $S_{j'}$, we know that $t$ and $z$ lie in $N(w)$ and in the same component of $G-S_{j'}$. So we get by Lemma~\ref{complete} that $t$ and $z$ are adjacent. We take the path $P'_z = tz$ as extension path of a path extension of $D_1$ with target $t$ and base $s$. Furthermore, we set this path extension of $D_1$ to be $D_2$ (see Figure~\ref{Ob_Su_cut_pic_2}). Then $s$ and $t$ are the only vertices from $S_{\ell}$ in $D_2$ and they are adjacent in $D_2$ too. Additionally, we get that $D_2$ contains no vertices from any $S_p$ where $p$ lies in $\lbrace 1, \ldots, k \rbrace \setminus \lbrace f(1), \ldots, f(m), \ell \rbrace$ because of the construction of $D_2$ and because $D_1$ has this property too. This completes the definition of the cycle $D_2$ in Case~1.

\begin{figure}[htbp]
\centering
\includegraphics[width=0.75\textwidth]{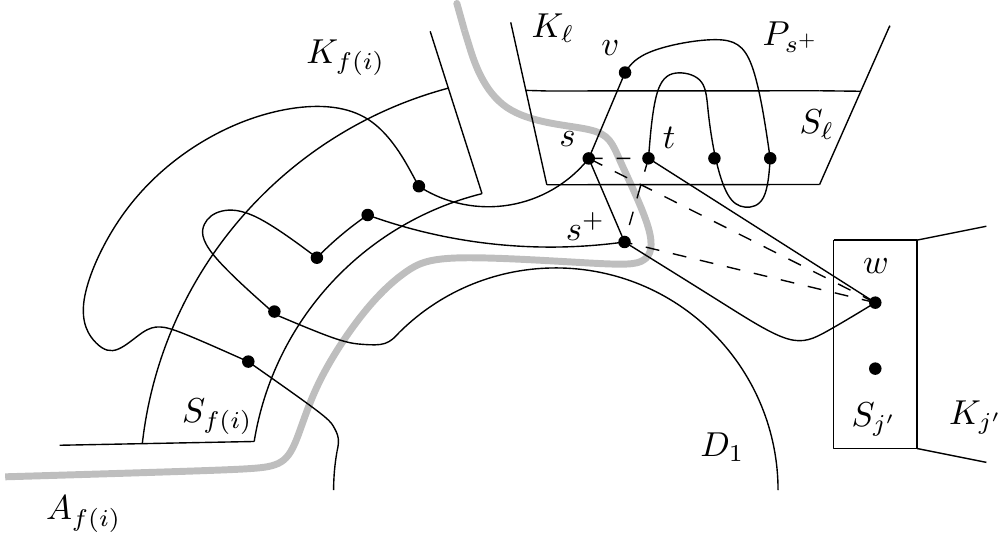}
\caption{Situation in Case~1 of Claim~2.}
\label{Ob_Su_cut_pic_2}
\end{figure}

\begin{case}
The vertex $w$ does not lie in $S_{j}$ for any $j \in \lbrace 1, \ldots, k \rbrace \setminus \lbrace f(1), \ldots, f(m) \rbrace$.
\end{case}

Here we take the path $P'_z = twz$ as extension path of a path extension of $D_1$ with target $t$ and base $s$. We set this path extension of $D_1$ to be $D_2$. Note that $w$ cannot be a vertex of $S_{\ell}$ due to its choice. So we get that $s$ and $t$ are the only vertices from $S_{\ell}$ in $D_2$ and they are adjacent in $D_2$ too. Here we get that $D_2$ contains no vertices from any $S_p$ where $p$ lies in $\lbrace 1, \ldots, k \rbrace \setminus \lbrace f(1), \ldots, f(m), \ell \rbrace$ using the assumption on $w$ together with the construction of $D_2$ and that $D_1$ has this property as well. With this we complete the definition of $D_2$ for Case~2.
\newline

It remains to define the sets $B_j$ for every ${j \in \lbrace f(1), \ldots, f(m) \rbrace}$ and to check that all requirements are fulfilled. We do this for both cases in one step where we use the different definitions of the extension path $P'_z$. We define $B_j$ as in Lemma~\ref{good_extensions} using $A_j$ and $P'_z$. In order to show that ${(D_2, B_{f(1)}, \ldots, B_{f(m)})}$ is a good tuple, we want to apply Lemma~\ref{good_extensions} again. We can do this because the definition of $P'_z$ ensures in each of both cases that the inclusion $V(P'_z) \cup \lbrace s \rbrace \subseteq \mathscr{S} \cup (N(N(C)) \cap V(K_0))$ holds.

The last thing we have to check is that each set $B_j$ contains either two vertices of $S_{\ell} \cap V(D_2)$ or no vertex of $S_{\ell} \cup V(K_{\ell})$. So let us fix an arbitrary ${j \in \lbrace f(1), \ldots, f(m) \rbrace}$. Note that $B_j$ cannot contain vertices from $S_{\ell} \cup (N_3(S_{\ell}) \cap V(K_{\ell}))$ except $s$ and $t$ because each such vertex has distance at most $4$ to $K_0$ and must therefore lie on the cycle $D_2$ by property~(d) of a good tuple. This is not possible since $D_2$ contains only $s$ and $t$ from $S_{\ell}$. The fact that $G[B_j]$ is connected by property~(e) of a good tuple and contains no vertices from $S_{\ell} \cup (N_3(S_{\ell}) \cap V(K_{\ell}))$ except $s$ and $t$ implies that $B_j$ does not contain any of the vertices in $S_{\ell} \cup V(K_{\ell}) \setminus \lbrace s, t \rbrace$. The definition of $B_j$ ensures furthermore that $s$ lies in $B_j$ if and only if $t$ lies in $B_j$. This completes the proof of Claim~2.
\newline

Let us fix a cycle $D_2$ and vertex sets $B_j$ for every ${j \in \lbrace f(1), \ldots, f(m) \rbrace}$ as in Claim~2 for the rest of the proof. Now we are able to define the desired cycle $C_{m+1}$ together with the value $f(m+1)$ and the vertex sets $M^{m+1}_j$ for every ${j \in \lbrace f(1), \ldots, f(m+1) \rbrace}$. We begin by setting
\[ f(m+1) = \ell.\]
Before we build the cycle $C_{m+1}$, we take a finite tree $T_{\ell}$ in $K_{\ell}$ such that the inclusion ${N_3(S_{\ell}) \cap V(K_{\ell}) \subseteq V(T_{\ell})}$ holds. This is possible because $K_{\ell}$ is connected and $G$ is locally finite, which implies together with the finiteness of $S_{\ell}$ that $N_3(S_{\ell})$ is finite. To build the cycle $C_{m+1}$, we take first $D_2$ and replace the edge $st$ of $D_2$ with $s, t \in S_{\ell}$ by the path $sn_sT_{\ell}n_tt$ where $n_s$ and $n_t$ are vertices in $N(s) \cap V(T_{\ell})$ and $N(t) \cap V(T_{\ell})$, respectively. Let us call the resulting cycle $\tilde{C}$. Now we build a path extension sequence $(\tilde{C}_i)$ of $\tilde{C}$ where we choose the targets always from $S_{\ell} \cup V(T_{\ell})$ and the bases always from $V(T_{\ell})$ until a cycle in this sequence contains all vertices of $S_{\ell} \cup V(T_{\ell})$. This is possible by Lemma~\ref{Ob_Su-enlarge} and because $S_{\ell} \cup V(T_{\ell})$ is finite. Let $\tilde{C}_n$ be the last cycle of the sequence. Then we set
\[ C_{m+1} = \tilde{C}_n.\]
Since $V(C_m) \cup S_{\ell} \cup V(T_{\ell}) \subseteq V(C_{m+1})$ holds by construction, we obtain that
\[ V(C) \cup \bigcup^{m+1}_{p=1} (S_{f(p)} \cup (N_3(S_{f(p)}) \cap V(K_{f(p)}))) \subseteq V(C_{m+1}) \]
is true, which is one of the desired properties. Moreover, we get that $C_{m+1}$ does not contain vertices from $S_q$ for any $q \in \lbrace 1, \ldots, k \rbrace \setminus \lbrace f(1), \ldots, f(m+1) \rbrace$. To see this, note that all vertices of $D_2$ which lie in $\mathscr{S}$ are contained in some $S_j$ where $j \in \lbrace f(1), \ldots, f(m+1) \rbrace$ by definition of $D_2$ and Claim~2. Since $\tilde{C}$ does not contain any other vertices from $\mathscr{S}$ than $D_2$ and we choose the bases of the path extensions for the sequence $(\tilde{C}_i)$ always from $V(T_{\ell}) \subseteq V(K_{\ell})$, all vertices of $C_{m+1}$ which lie in $\mathscr{S}$ must also be contained in some $S_j$ where $j \in \lbrace f(1), \ldots, f(m+1) \rbrace$.

Next we define the sets $M^{m+1}_j$ for every $j \in \lbrace f(1), \ldots, f(m+1) \rbrace$ and verify that they have the desired properties. We begin by setting
\[ M^{m+1}_{f(m+1)} = S_{\ell} \cup V(K_{\ell}). \]
It is obvious that the inclusions ${V(K_{\ell}) \subseteq M^{m+1}_{{f(m+1)}} \subseteq (V \setminus V(C)) \cup N_2(N(C))}$ hold and that $G[M^{m+1}_{f(m+1)}]$ is connected. Furthermore, the definition of $M^{m+1}_{f(m+1)}$ implies that $M^{m+1}_{f(m+1)}$ contains either no or all vertices of $K_p$ for every ${p \in \lbrace 1, \ldots, k \rbrace}$. Since $V(C_{m+1})$ contains all vertices of the set $S_{\ell} \cup V(T_{\ell})$ and $N_3(S_{\ell}) \cap V(K_{\ell})$ is a subset of $V(T_{\ell})$, we get that all vertices in $M^{m+1}_{f(m+1)}$ with distance at most $4$ to $K_0$ lie on $C_{m+1}$. It remains to check that the equation ${|E(C_{m+1}) \cap \delta(M^{m+1}_{f(m+1)})| = 2}$ holds. Note that ${|E(D_2) \cap \delta(M^{m+1}_{f(m+1)})| = 2}$ is true because $D_2$ contains only two vertices of $S_{\ell}$ and these vertices are adjacent in $D_2$ by Claim~2. The construction of $C_{m+1}$ ensures that all edges in $E(C_{m+1}) \setminus E(D_2)$ lie in $G[M^{m+1}_{f(m+1)}]$. So only two edges of $C_{m+1}$ meet the cut $\delta(M^{m+1}_{f(m+1)})$.

By Claim~2, we know that $D_2$ contains precisely two vertices $s, t$ which lie in $S_{\ell}$. We know furthermore by Claim~2 that either $s$ and $t$ or none of them is an element of $B_j$ for every ${j \in \lbrace f(1), \ldots, f(m) \rbrace}$. Now we make the following definition for every ${j \in \lbrace f(1), \ldots, f(m) \rbrace}$:
\[M^{m+1}_{j} = 
\begin{cases}
B_{j} \cup S_{\ell} \cup V(K_{\ell}) &\mbox{if } s, t \in B_j \\
B_{j} & \mbox{otherwise}.
\end{cases} \]
Let us fix an arbitrary ${j \in \lbrace f(1), \ldots, f(m) \rbrace}$ and verify the desired properties. It is obvious that $G[M^{m+1}_{j}]$ is connected because $G[B_{j}]$ is connected by Claim~2 and property~(e) of a good tuple. It is also easy to see that the two inclusions ${V(K_{j}) \subseteq M^{m+1}_{j} \subseteq (V \setminus V(C)) \cup N_2(N(C))}$ are true since the corresponding result with $B_j$ instead of $M^{m+1}_{j}$ holds by Claim~2 and property~(b) of a good tuple.

To show that all vertices in $M^{m+1}_{j}$ which have distance at most $4$ to $K_0$ lie on $C_{m+1}$, it suffices to check the case where $M^{m+1}_{j} = B_{j} \cup S_{\ell} \cup V(K_{\ell})$ by Claim~2 and property~(d) of a good tuple. Since all vertices in $B_{j}$ which have distance at most $4$ to $K_0$ lie on $C_{m+1}$ as well as all vertices in $S_{\ell} \cup V(T_{\ell})$ where $N_3(S_{\ell}) \cap V(K_{\ell}) \subseteq V(T_{\ell})$, we get that all vertices in $M^{m+1}_{j}$ with distance at most $4$ to $K_0$ lie on $C_{m+1}$.

Now we check that the cycle $C_{m+1}$ meets precisely two edges of the cut $\delta(M^{m+1}_{j})$. In the case where $M^{m+1}_{j} = B_j$ holds, we know that $B_j$ does not contain any vertices of $S_{\ell} \cup V(K_{\ell})$ because of Claim~2. The equation ${|E(D_2) \cap \delta(B_j)| = 2}$ is also true by Claim~2 and property~(c) of a good tuple. Since all edges of $E(C_{m+1}) \setminus E(D_2)$ lie in $G[S_{\ell} \cup V(K_{\ell})]$, the cycle $C_{m+1}$ hits still precisely two edges of the cut $\delta(B_j)$. So let us consider the case where $M^{m+1}_{j} = B_{j} \cup S_{\ell} \cup V(K_{\ell})$. Here we know by Claim~2 that $B_j$ contains the vertices $s$ and $t$ from $S_{\ell} \cup V(K_{\ell})$ but no other vertex of this set. By Claim~2 and property~(c) of a good tuple, we know additionally that the equation ${|E(D_2) \cap \delta(B_j)| = 2}$ holds. In this case, the equation implies that also ${|E(D_2) \cap \delta(M^{m+1}_{j})| = 2}$ is true. Since all edges of $E(C_{m+1}) \setminus E(D_2)$ lie completely in $G[M^{m+1}_{j}]$, the cycle $C_{m+1}$ meets the cut $\delta(M^{m+1}_{j})$ precisely twice.

It remains to prove that $M^{m+1}_{j}$ has the property that it contains either no or all vertices of $K_p$ for each $p \in \lbrace 1, \ldots, k \rbrace$. We know that this is true for $B_{j}$ by Claim~2 together with property~(e) of a good tuple and so the definition of $M^{m+1}_{j}$ implies again that $M^{m+1}_{j}$ has this property. So the tuple ${(C_{m+1}, M^{m+1}_{f(1)}, \ldots M^{m+1}_{f(m+1)})}$ has all desired properties. This completes the construction of the sequence $(C_0, \ldots, C_k)$ of cycles.
\newline

To complete the proof of this lemma, we take now a path extension sequence $(\hat{C}_i)$ of $C_k$ where we choose the targets and bases always from $V(K_0)$ until a cycle in this sequence contains all vertices of $K_0$. Note that $V(K_0)$ is finite by its choice and Lemma~\ref{struct_toll}. Let $\hat{C}_{M}$ be the last cycle of the sequence. Then we set $C' = \hat{C}_{M}$. The construction of the cycle $C_k$ ensures that $V(C) \cup \mathscr{S} \cup (N_3(\mathscr{S}) \setminus V(K_0)) \subseteq V(C_k)$ holds. Hence, the inclusion $V(K_0) \cup \mathscr{S} \cup N_3(\mathscr{S}) \subseteq V(C')$ is true as desired for statement~(i) of this lemma.

For each cycle $\hat{C}_i$, we define now vertex sets $\hat{M}^i_j$ for every ${j \in \lbrace 1, \ldots, k \rbrace}$ such that each tuple ${(\hat{C}_i, \hat{M}^i_1, \ldots, \hat{M}^i_k)}$ is good. If we have constructed these vertex sets, statement~(ii) of this lemma holds by setting ${M_j = \hat{M}^{M}_j}$ for every ${j \in \lbrace 1, \ldots, k \rbrace}$. For $i = 0$, the construction of $C_k$ ensures that $\hat{M}^0_j = M^{k}_{j}$ is a valid choice for every ${j \in \lbrace 1, \ldots, k \rbrace}$.

Now assume we have already defined for every cycle $\hat{C}_i$ with $0 \leq i \leq N < M$ the corresponding vertex sets $\hat{M}^i_j$. Let $P_{N}$ be the extension path of the path extension $\hat{C}_{N+1}$ of $\hat{C}_{N}$ with base $u$ and endvertex $x$ which is different from the target. We define the set $\hat{M}^{N+1}_j$ as in Lemma~\ref{good_extensions} for every ${j \in \lbrace 1, \ldots, k \rbrace}$ using $\hat{M}^{N}_j$ and $P_N$ together with the base $u$. With this definition, we only have to check that we can apply Lemma~\ref{good_extensions}, which then ensures that the tuple ${(\hat{C}_i, \hat{M}^i_1, \ldots, \hat{M}^i_k)}$ is good. Since $V(P_N)$ is a subset of $N(u)$ and $u$ lies in $K_0$ but has a neighbour, the target of the path extension, which lies in $V(K_0) \setminus V(C)$, we obtain that the inclusion $V(P_{N}) \cup \lbrace u \rbrace \subseteq V(K_0) \setminus V(C) \cup \mathscr{S} \cup (N_2(N(C)) \cap V(K_0))$ holds. Hence, Lemma~\ref{good_extensions} is applicable and so the construction of the path extension sequence $(\hat{C}_{i})$ is done. As mentioned before, this finishes the proof of statement~(ii) of this lemma.
\newline

Finally, we have to show that statement~(iii) of the lemma is true. Note that we have only lost edges of the cycle $C$ by building path extensions. Edges of some cycle we lose by building a path extension of this cycle have always a neighbour on the extension path or are incident with the base of the path extension. Since the bases we have chosen have always a neighbour which does not lie on $C$ and each extension path lies in the neighbourhood of the corresponding base, we obtain that all edges of $C - N_2(N(C))$ must also be edges of $C'$.

Note for the other part of statement~(iii) that each edge $e = uv \in E(C') \setminus E(C)$ lies either on a path whose endvertices are in $\mathscr{S}$ and whose inner vertices lie in some of the trees $T_j$ with $1 \leq j \leq k$ or lies on a path extension we have built during the construction of $C'$. In the first case, the inclusion $\lbrace u, v \rbrace \subseteq V \setminus V(C)$ is valid. So let us consider the second case. It is easy to see that for any cycle $Z$ and any path extension $Z'$ of $Z$ with base $b$ each edge $f = v_1v_2 \in E(Z') \setminus E(Z)$ satisfies $\lbrace v_1, v_2 \rbrace \subseteq N_2(b)$. Either the edge $f$ lies on the corresponding extension path which lies in the neighbourhood of $b$ or the vertices $v_1$ and $v_2$ are the two neighbours of $b$ or of some neighbour of $b$ in $Z$. Now note that during the construction of $C'$ we have always chosen the bases of the path extensions from the set $N(N(C)) \cup V \setminus V(C)$. So the inclusion $\lbrace u, v \rbrace \subseteq N_3(N(C)) \cup V \setminus V(C)$ holds, which proves the other part of statement~(iii) and completes the proof of the lemma.
\end{proof}

Now we are able to prove Theorem~\ref{Inf_Ob_Su}. As remarked earlier, we want to apply Lemma~\ref{HC-extract} to prove this theorem. For this purpose, we will use Lemma~\ref{Ob_Su-cut-1} to obtain a sequence of cycles and vertex sets such that all conditions for the use of Lemma~\ref{HC-extract} are fulfilled.

\begin{proof}[Proof of Theorem~\ref{Inf_Ob_Su}]
Let $G = (V, E)$ be a locally finite, connected, locally connected, claw-free graph on at least three vertices. For the proof, we may assume further that $V$ is infinite because Theorem~\ref{Ob_Su} deals with the finite case.

We want to define a sequence $(C_i)_{i \in \mathbb{N}}$ of cycles of $G$ where each cycle $C_i$ has a vertex with distance at least $3$ to $N(C_i)$. Furthermore, we define an integer sequence $(k_i)_{i \in \mathbb{N} \setminus \lbrace 0 \rbrace}$ and vertex sets $M^i_j \subseteq V(G)$ for every $i \in \mathbb{N} \setminus \lbrace 0 \rbrace$ and $j$ with $1 \leq j \leq k_i$.

We start by taking a finite cycle $\tilde{C}$ in $G$. There exists one since $G$ is $2$-connected by Proposition~\ref{2-conn}. Now we build a path extension sequence of $\tilde{C}$ where we choose the targets always from $N_2(\tilde{C})$ until a cycle in this sequence contains all vertices of $V(\tilde{C}) \cup N_2(\tilde{C})$. This is possible by Lemma~\ref{Ob_Su-enlarge} and since $G$ is locally finite, which implies that $V(\tilde{C}) \cup N_2(\tilde{C})$ contains only finitely many vertices. We define $C_0$ to be the last cycle in the sequence.

Now assume that we have already defined the sequence of cycles up to the cycle $C_m$ for some $m \geq 0$ together with the integer sequence up to $k_m$ and the vertex sets $M^i_j$ for every $i \leq m$ where $j$ satisfies always $1 \leq j \leq k_i$. Then let $\mathscr{S}^{m+1} \subseteq N(C_{m})$ be a finite minimal vertex set such that every ray starting in $V(C_m)$ must meet $\mathscr{S}^{m+1}$. Such a set exists because $G$ is locally finite, which implies that $N(C_{m})$ is finite. Hence, we could get $\mathscr{S}^{m+1}$ by sorting out vertices from $N(C_m)$. Next we set $k_{m+1}$ as the integer we get from Lemma~\ref{struct_toll}. Furthermore, let $S^{m+1}_1, \ldots, S^{m+1}_{k_{m+1}}$ be the minimal separators and $K^{m+1}_0, \ldots, K^{m+1}_{k_{m+1}}$ be the components of $G-\mathscr{S}^{m+1}$ which we get from Lemma~\ref{struct_toll}. With these objects and the cycle $C_m$, we can apply Lemma~\ref{Ob_Su-cut-1} and obtain a new cycle which we set as $C_{m+1}$ and vertex sets for every $j$, which we set as $M^{m+1}_j$ where $1 \leq j \leq k_{m+1}$ holds.

\begin{figure}[htbp]
\centering
\includegraphics[width=0.65\textwidth]{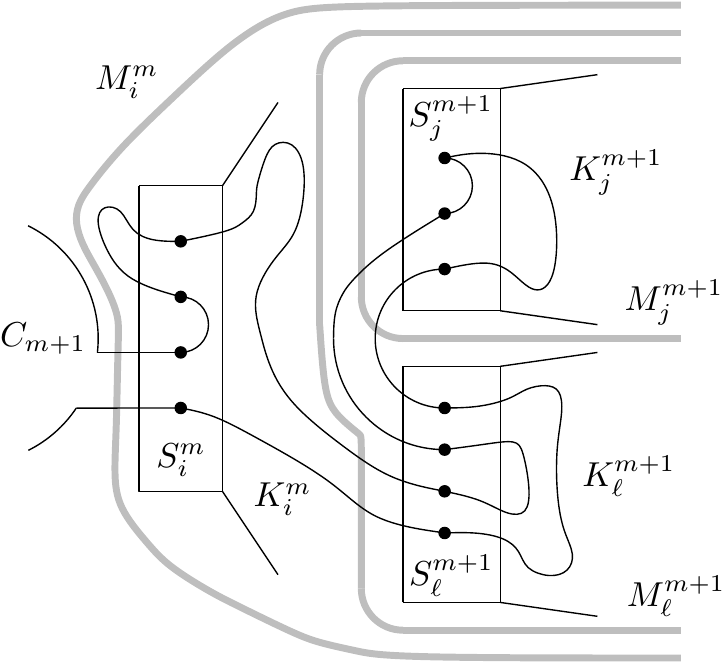}
\caption{The cycle $C_{m+1}$ together with the vertex sets from Lemma~\ref{Ob_Su-cut-1}.}
\label{Thm_1_2_cycle_seq}
\end{figure}

In order to prove that $G$ is Hamiltonian, we want to use Lemma~\ref{HC-extract}. For this purpose, we show the following claim which tells us that the cycles and vertex sets are arranged in a correct way (see Figure~\ref{Thm_1_2_cycle_seq}).

\setcounter{claim}{0}
\begin{claim}
\textnormal{
\begin{enumerate}[\normalfont(a)]
\item \textit{For every vertex $v$ of $G$, there exists an integer $j \geq 0$ such that $v \in V(C_i)$ holds for every $i \geq j$.}
\item \textit{For every $i \geq 1$ and $j$ with $1 \leq j \leq k_i$, the cut $\delta(M^i_j)$ is finite.}
\item \textit{For every end $\omega$ of $G$, there is a function $f : \mathbb{N} \setminus \lbrace 0 \rbrace \longrightarrow \mathbb{N}$ such that the inclusion ${M^{j}_{f(j)} \subseteq M^i_{f(i)}}$ holds for all integers $i, j$ with $1 \leq i \leq j$ and the equation ${M_{\omega}:= \bigcap^{\infty}_{i=1} \overline{M^i_{f(i)}} = \lbrace \omega \rbrace}$ is true.}
\item \textit{$E(C_i) \cap E(C_j) \subseteq E(C_{j+1})$ holds for all integers $i$ and $j$ with $0 \leq i < j$.}
\item \textit{The equations $E(C_i) \cap \delta(M^p_j) = E(C_p) \cap \delta(M^p_j)$ and $|E(C_i) \cap \delta(M^p_j)| = 2$ hold for each triple $(i, p, j)$ which satisfies $1 \leq p \leq i$ and $1 \leq j \leq k_p$.}
\end{enumerate}
}
\end{claim}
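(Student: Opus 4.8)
The plan is to check (a)--(e) by unwinding the properties of $C_{m+1}$ and the $M^{m+1}_j$ guaranteed by Lemma~\ref{Ob_Su-cut-1}(i)--(iii) together with the definition of a good tuple, using throughout two structural facts about the sequence $(C_m)$. The first is \emph{monotonicity}: since $\mathscr{S}^{m+1}\subseteq N(C_m)$ is disjoint from $V(C_m)$ and $G[V(C_m)]$ is connected, $V(C_m)$ lies in the finite component $K^{m+1}_0$ of $G-\mathscr{S}^{m+1}$, and by minimality of $\mathscr{S}^{m+1}$ every vertex of $N(C_m)\setminus\mathscr{S}^{m+1}$ is adjacent to that component and hence also lies in it; so $V(C_m)\cup N(C_m)\subseteq V(K^{m+1}_0)\cup\mathscr{S}^{m+1}\subseteq V(C_{m+1})$ by Lemma~\ref{Ob_Su-cut-1}(i). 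Iterating, $V(C_0)\subseteq V(C_1)\subseteq\cdots$, and $C_i$ contains every vertex of distance at most $i-i_0$ from $C_{i_0}$. The second is the \emph{receding boundary}: for $m\geq 1$ any $w\in N(C_m)$ lies off $C_m\supseteq V(K^m_0)\cup\mathscr{S}^m\cup N_3(\mathscr{S}^m)$, hence in an infinite component of $G-\mathscr{S}^m$ at distance at least $4$ from $\mathscr{S}^m$; as $\mathscr{S}^m$ separates that component from $K^m_0$, the vertex $w$ has distance at least $4$ from $V(K^m_0)\supseteq V(C_{m-1})$. Combined with monotonicity this yields $\mathrm{dist}(v,N(C_m))\to\infty$ for every fixed vertex $v$, and it also shows that each $C_m$ has a vertex of distance at least $3$ from $N(C_m)$ (for $m\geq 1$, any vertex of the nonempty set $\mathscr{S}^m$), a fact maintained along the construction that licenses the repeated applications of Lemma~\ref{Ob_Su-cut-1}.

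Given these, (a) is immediate from monotonicity, and (d) is short: if $e=uv\in E(C_i)\cap E(C_j)$ with $i<j$, then $u,v\in V(C_i)\subseteq V(C_{j-1})\subseteq V(K^j_0)$, so $u,v\notin N_2(N(C_j))$ by the receding-boundary fact, whence $e\in E\bigl(C_j-N_2(N(C_j))\bigr)\subseteq E(C_{j+1})$ by Lemma~\ref{Ob_Su-cut-1}(iii). For (b), the tuple $(C_i,M^i_1,\ldots,M^i_{k_i})$ is good by Lemma~\ref{Ob_Su-cut-1}(ii); by good properties (d) and (f), a vertex of $M^i_j$ not lying on the finite cycle $C_i$ has distance at least $5$ from $K^i_0$, hence lies, together with all its neighbours, in a single infinite component of $G-\mathscr{S}^i$ which is wholly contained in $M^i_j$, so it is incident with no edge of $\delta(M^i_j)$; thus every edge of $\delta(M^i_j)$ is incident with a vertex of the finite set $V(C_i)$, and $\delta(M^i_j)$ is finite because $G$ is locally finite.

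For (e), the identity $|E(C_p)\cap\delta(M^p_j)|=2$ is good property (c). The persistence $E(C_i)\cap\delta(M^p_j)=E(C_p)\cap\delta(M^p_j)$ for $i\geq p$ I would prove by induction on $i$, the point being that the cut $\delta(M^p_j)$ is ``anchored at $C_p$'' while each passage $C_n\to C_{n+1}$ with $n\geq p$ alters the cycle only near the far-away boundary $N(C_n)$: concretely, each edge of $\delta(M^p_j)$ is incident with $V(C_p)$, and every infinite component of $G-\mathscr{S}^{n+1}$, being disjoint from $\mathscr{S}^p\subseteq V(C_p)\subseteq V(C_n)$, lies in one component of $G-\mathscr{S}^p$ and hence, by good property (f), entirely on one side of $\delta(M^p_j)$; combining this with Lemma~\ref{Ob_Su-cut-1}(iii) should give that the two crossing edges of $C_p$ survive into every $C_i$ and that no new crossing edge is ever created. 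This part is somewhat fiddly but I expect it to be routine.

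The main obstacle is (c). Here the natural choice is to let $f(i)$ be the index of the infinite component of $G-\mathscr{S}^i$ containing the tails of all rays of $\omega$ (well defined because $\mathscr{S}^i$ is finite and equivalent rays cannot be separated by a finite set), possibly adjusted to secure the monotonicity. Then $\omega\in\overline{M^i_{f(i)}}$ for every $i$ since $V(K^i_{f(i)})\subseteq M^i_{f(i)}$; that no vertex and no inner point of an edge survives in $\bigcap_i\overline{M^i_{f(i)}}$ follows from (a) together with Lemma~\ref{jumping-arc}(i) applied to the finite cut $\delta(M^i_{f(i)})$ from (b); and that no other end $\omega'$ survives follows by choosing $i$ so large that a finite set separating $\omega$ from $\omega'$ lies in $V(C_{i-1})\subseteq V(K^i_0)$, which forces the tails of $\omega'$ into a component of $G-\mathscr{S}^i$ other than $K^i_{f(i)}$. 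The delicate issue is the nesting $M^j_{f(j)}\subseteq M^i_{f(i)}$ for $i\leq j$ and, entangled with it, controlling the whole infinite components besides $K^i_{f(i)}$ that a good tuple permits $M^i_{f(i)}$ to absorb, so that they do not leave extra ends in the intersection. This will require a careful inductive description of how the vertex sets produced by successive applications of Lemma~\ref{Ob_Su-cut-1} sit inside one another, exploiting that for $i<j$ one has $\mathscr{S}^i\subseteq V(C_i)\subseteq V(C_{j-1})\subseteq V(K^j_0)$, so that each infinite component of $G-\mathscr{S}^j$ refines a unique infinite component of $G-\mathscr{S}^i$, together with good properties (b), (d) and (f). I expect this bookkeeping to be the bulk of the proof.
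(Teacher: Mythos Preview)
Your plan is sound and tracks the paper's own argument almost step for step; (a), (b) and (d) are done exactly as in the paper, and your sketch for (e) --- localising $\delta(M^p_j)$ near $\mathscr{S}^p$ and then invoking Lemma~\ref{Ob_Su-cut-1}(iii) --- is precisely what the paper does.

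For (c), however, you are overestimating the difficulty and in particular inventing a problem that is not there. With $f(i)$ defined just as you propose (no adjustment needed), the single inclusion
\[
M^{i+1}_{f(i+1)} \subseteq V(K^i_{f(i)})
\]
does all the work. It follows in three lines: good property~(b) gives $M^{i+1}_{f(i+1)} \subseteq (V\setminus V(C_i))\cup N_2(N(C_i))$, and your receding-boundary observation yields $\mathrm{dist}(\mathscr{S}^i,\mathscr{S}^{i+1})\geq 4$, so $M^{i+1}_{f(i+1)}$ misses $\mathscr{S}^i$; good property~(e) says $G[M^{i+1}_{f(i+1)}]$ is connected; and $V(K^{i+1}_{f(i+1)})\subseteq V(K^i_{f(i)})$ by the choice of $f$. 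Hence $M^{i+1}_{f(i+1)}$ lies in a single component of $G-\mathscr{S}^i$, namely $K^i_{f(i)}$. This simultaneously gives the nesting $M^{i+1}_{f(i+1)}\subseteq V(K^i_{f(i)})\subseteq M^i_{f(i)}$ and dissolves your worry about ``extra absorbed components'': whatever extra infinite components $M^i_{f(i)}$ may have swallowed, the later sets $M^q_{f(q)}$ for $q>i$ live inside $V(K^i_{f(i)})$ alone, so only ends of $K^i_{f(i)}$ can survive in the intersection. The rest of (c) is then the short argument you already outlined.
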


We begin the proof of this claim with statement~(a). Here we use that the inclusions ${V(K^i_0) \cup \mathscr{S}^i \cup N_3(\mathscr{S}^i) \subseteq V(C_{i}) \subseteq V(K^{i+1}_0)}$ hold for every $i \geq 1$  by construction and Lemma~\ref{Ob_Su-cut-1}~(i). Since $N(K^{i}_0) = \mathscr{S}^i$ is true by definition and Lemma~\ref{struct_toll}, statement~(a) follows.

We fix an arbitrary integer $i \geq 1$ and some $j$ which satisfies $1 \leq j \leq k_i$ to prove statement~(b). By definition and Lemma~\ref{Ob_Su-cut-1}~(ii), we know that $M^i_j$ contains either all or no vertices of $K^i_p$ for every $p$ with $1 \leq p \leq k_i$. Using Lemma~\ref{struct_toll}, we obtain that $N(M^i_j) \subseteq V(K^i_0) \cup \mathscr{S}^i \cup N(\mathscr{S}^i)$. Since $K^i_0$ and $\mathscr{S}^i$ are finite by definition and Lemma~\ref{struct_toll} and $G$ is locally finite by assumption, we obtain that $\delta(M^i_j)$ is finite.

Let us fix an arbitrary end $\omega$ of $G$ for statement~(c). We use that for every $i \geq 1$ the end $\omega$ lies in precisely one of the closures $\overline{K^i_{1}}, \ldots, \overline{K^i_{k_i}}$, say $\omega \in \overline{K^i_{j}}$ where $1 \leq j \leq k_i$, since $K^i_0$ and $\mathscr{S}^i$ are finite by definition and Lemma~\ref{struct_toll}. Then we set $f(i) = j$. Now we show that ${M^{j}_{f(j)} \subseteq M^i_{f(i)}}$ holds for all integers $i, j$ with $1 \leq i \leq j$. Note that it suffices to prove the inclusion ${M^{i+1}_{f(i+1)} \subseteq M^i_{f(i)}}$ for every $i \geq 1$. The definition of $M^{i}_{f(i)}$ and Lemma~\ref{Ob_Su-cut-1}~(ii) ensure that $G[M^{i}_{f(i)}]$ is connected and that the inclusions ${V(K^{i}_{f(i)}) \subseteq M^{i}_{f(i)} \subseteq (V \setminus V(C_{i-1})) \cup N_2(N(C_{i-1}))}$ are valid for every $i \geq 1$. Note that the definition of $f$ implies the inclusion $V(K^{i+1}_{f(i+1)}) \subseteq V(K^{i}_{f(i)})$ for every $i \geq 1$. We need furthermore the observation that $M^{i+1}_{f(i+1)}$ does not contain a vertex of $\mathscr{S}^i$ for any $i \geq 1$. To see this, note that ${V(K^{i}_0) \cup \mathscr{S}^{i} \cup N_3(\mathscr{S}^{i}) \subseteq V(C_i)}$ holds for every $i \geq 1$ by definition of $C_i$ together with Lemma~\ref{Ob_Su-cut-1}~(i). So the distance between $\mathscr{S}^{i}$ and $\mathscr{S}^{i+1}$ is at least $4$ for every $i \geq 1$. Now the inclusion ${M^{i+1}_{f(i+1)} \subseteq (V \setminus V(C_{i})) \cup N_2(N(C_i))}$ implies that $M^{i+1}_{f(i+1)}$ cannot contain a vertex of $\mathscr{S}^i$ for any $i \geq 1$. Since we know for every $i \geq 1$ that $G[M^{i+1}_{f(i+1)}]$ is connected, $V(K^{i+1}_{f(i+1)})$ is a subset of $M^{i+1}_{f(i+1)}$ and $\mathscr{S}^{i}$ separates ${V \setminus (V(K^{i}_{f(i)}) \cup \mathscr{S}^{i})}$ from $V(K^{i}_{f(i)})$ by definition and Lemma~\ref{struct_toll}, we obtain that ${M^{i+1}_{f(i+1)} \subseteq V(K^{i}_{f(i)}) \subseteq M^{i}_{f(i)}}$ holds for every $i \geq 1$.

It remains to prove the equation ${M_{\omega} = \lbrace \omega \rbrace}$. As noted above, the inclusions ${V(K^i_{f(i)}) \subseteq M^i_{f(i)} \subseteq (V \setminus V(C_{i-1})) \cup N_2(N(C_{i-1}))}$ are true for every $i \geq 1$. So the definition of $f$ ensures that $\omega$ is an element of $M_{\omega}$. Next we show that $M_{\omega}$ does not contain a vertex of $G$ or any other end of $G$ than $\omega$. So let $v \in V(G)$ and $\omega' \neq \omega$ be an end of $G$. This means we can find a finite set of vertices $F \subseteq V(G)$ such that $\omega$ and $\omega'$ lie in closures of different components of ${G-F}$. Let $q \geq 1$ be an integer such that all vertices of $F \cup \lbrace v \rbrace$ are contained in $K^{q}_0$. We can find such an integer because each vertex $w \in F \cup \lbrace v \rbrace$ lies in some cycle $C_{\ell_w}$ with $\ell_w \geq 0$ by statement~(a). Using that $V(C_i) \subseteq V(C_{i+1})$ is true for every $i \geq 0$ by construction and Lemma~\ref{Ob_Su-cut-1}~(i) together with the fact that the set $F \cup \lbrace v \rbrace$ is finite, we can set $q-1$ as the maximum of all integers $\ell_w$. By definition of $K^{q}_0$ and Lemma~\ref{struct_toll}, we obtain that the inclusion $F \cup \lbrace v \rbrace \subseteq V(K^{q}_0)$ holds. This implies that $\omega$ and $\omega'$ lie also in closures of different components of $G-V(K^{q}_0)$. By construction and Lemma~\ref{Ob_Su-cut-1}~(ii), we know that the graph $G[M^{q+1}_{f(q+1)}]$ is connected. Furthermore, we get that the inclusions ${M^{q+1}_{f(q+1)} \subseteq (V \setminus V(C_{q})) \cup N_2(N(C_q))}$ and ${V(K^{q}_0) \cup \mathscr{S}^{q} \cup N_3(\mathscr{S}^{q}) \subseteq V(C_{q})}$ are true. Since the distance from $K^{q}_0$ to $N(C_q)$ is at least $5$ by construction, the set ${M^{q+1}_{f(q+1)}}$ cannot contain vertices of $K^{q}_0$. So $v$ is no element of $M^{q+1}_{f(q+1)}$. Now the connectedness of $G[M^{q+1}_{f(q+1)}]$ implies that $G[M^{q+1}_{f(q+1)}]$ is a subgraph of the component of $G-V(K^{q}_0)$ whose closure contains $\omega$. As $\omega$ and $\omega'$ lie in closures of different components of $G-V(K^{q}_0)$, the end $\omega'$ does not lie in the closure of $M^{q+1}_{f(q+1)}$ and we obtain that $v$ and $\omega'$ are no elements of $M_{\omega}$. Since each $M^{i}_{f(i)}$ is a vertex set, the intersection $M_{\omega}$ cannot contain inner points of edges. Therefore, the equation $M_{\omega} = \lbrace \omega \rbrace$ is valid, which shows statement~(c).

To prove statement~(d), take an edge $e \in E(C_i) \cap E(C_j)$ for arbitrary integers $i$ and $j$ that satisfy $0 \leq i < j$. So both endvertices of $e$ lie in $V(C_i) \subseteq V(K^{i+1}_0)$. Additionally, the inclusions ${V(K^{i+1}_0) \cup \mathscr{S}^{i+1} \cup N_3(\mathscr{S}^{i+1}) \subseteq V(C_{i+1}) \subseteq V(C_j)}$ are true by definition of the cycles and Lemma~\ref{Ob_Su-cut-1}~(i). Using the equality ${N(K^{i+1}_0) = \mathscr{S}^{i+1}}$, we conclude that ${e \in E(C_j - N_2(N(C_j)))}$ holds. So we get by definition of the cycles and Lemma~\ref{Ob_Su-cut-1}~(iii) that $e$ lies in $E(C_{j+1})$. This completes the proof of statement~(d).

Let us fix an arbitrary $p \geq 1$ and $j$ with $1 \leq j \leq k_p$ for statement~(e). We know that $|E(C_p) \cap \delta(M^p_j)| = 2$ holds by definition of the cycles and Lemma~\ref{Ob_Su-cut-1}~(ii). So it suffices to prove that $E(C_p) \cap \delta(M^p_j) = E(C_i) \cap \delta(M^p_j)$ holds for every $i \geq p$. Next let us consider an arbitrary edge $e = uv \in \delta(M^p_j)$. We prove now that $u$ and $v$ are contained in $V(K^p_0) \cup \mathscr{S}^p \cup N(\mathscr{S}^p)$ where at most one of these two vertices lies in $N(\mathscr{S}^p) \setminus V(K^p_0)$. If one of the endvertices of $e$ lies in $N(\mathscr{S}^p) \setminus V(K^p_0)$, it must be contained in $V(K^p_{j'}) \cap M^p_j$ for some $j'$ which satisfies $1 \leq j' \leq k_p$. Then the other endvertex of $e$ lies in $N(K^p_{j'}) \subseteq \mathscr{S}^p$ because the set $M^p_j$ must contain all vertices of $K^p_{j'}$ by definition and Lemma~\ref{Ob_Su-cut-1}~(ii). So the inclusion $\lbrace u,v \rbrace \subseteq \mathscr{S}^p \cup N(\mathscr{S}^p)$ is true. Otherwise, neither $u$ nor $v$ is a vertex of $N(\mathscr{S}^p) \setminus V(K^p_0)$. As precisely one endvertex of $e$ lies in $V \setminus M^p_j$ and $M^p_j$ contains either no or all vertices of $K^p_q$ for each $q$ with $1 \leq q \leq k_p$ by definition and Lemma~\ref{Ob_Su-cut-1}~(ii), we get that neither $u$ nor $v$ lies in any $K^p_q$ with $1 \leq q \leq k_p$. So both vertices must lie in $V(K^p_0) \cup \mathscr{S}^p$. Now we prove by induction on $i$ that  every edge $e' = u'v' \in E(C_p) \cap \delta(M^p_j)$ is an edge of $C_i$ for every $i \geq p$. For $i=p$, this is obvious. So assume that $e'$ is an edge of $C_i$ for some $i \geq p$. Note that ${V(K^{p}_0) \cup \mathscr{S}^p \cup N_3(\mathscr{S}^p) \subseteq V(C_i)}$ is true for every $i \geq p$ by definition and Lemma~\ref{Ob_Su-cut-1}~(i). Therefore, the edge $e'$ lies in $E(C_i - N_2(N(C_i)))$, which means that $e'$ is also an edge of $C_{i+1}$ by definition of the cycles and Lemma~\ref{Ob_Su-cut-1}~(iii). This completes the induction and shows that $e'$ is an edge of $C_i$ for every $i \geq p$. So the inclusion $E(C_p) \cap \delta(M^p_j) \subseteq E(C_i) \cap \delta(M^p_j)$ is true. We complete the proof of statement~(e) by showing by induction on $i$ that for every $i \geq p$ the cycle $C_i$ contains no edges of $\delta(M^p_j)$ but the two which are also edges of $C_p$. For $i=p$, this is obvious by definition of $C_p$ and Lemma~\ref{Ob_Su-cut-1}~(ii). So let $i > p$ and $e = uv \in \delta(M^p_j) \setminus E(C_p)$ be fixed for this purpose. Using the induction hypothesis, we know that $e \notin E(C_{i-1})$. Now suppose for a contradiction that $e$ is an edge of $C_i$. Then the definition of $C_i$ together with Lemma~\ref{Ob_Su-cut-1}~(iii) implies that $\lbrace u, v \rbrace \subseteq (V \setminus V(C_{i-1})) \cup N_3(N(C_{i-1}))$ holds. This leads towards a contradiction because we already know that the inclusion $\lbrace u,v \rbrace \subseteq V(K^p_0) \cup \mathscr{S}^p \cup N(\mathscr{S}^p)$ is valid where $\lbrace u, v \rbrace$ is no subset of $N(\mathscr{S}^p) \setminus V(K^p_0)$. That cannot be true since $C_{i-1}$ contains all vertices of $V(K^p_0) \cup \mathscr{S}^p \cup N_3(\mathscr{S}^{p})$ by definition of the cycle and Lemma~\ref{Ob_Su-cut-1}~(i). This completes the induction and we get that $E(C_p) \cap \delta(M^p_j) = E(C_i) \cap \delta(M^p_j)$ holds for every $i \geq p$. So the proof of statement~(e) is done and the claim is proved.
\newline

Using the sequence of cycles $(C_i)_{i \in \mathbb{N}}$, the integer sequence $(k_i)_{i \in \mathbb{N} \setminus \lbrace 0 \rbrace}$ and the vertex sets $M^i_j$ for every $i \in \mathbb{N} \setminus \lbrace 0 \rbrace$ and $j$ with $1 \leq j \leq k_i$, we can apply Lemma~\ref{HC-extract} because of Claim~1. So we get that $G$ is Hamiltonian.
\end{proof}

Next we state a couple of corollaries of Theorem~\ref{Inf_Ob_Su}. In order to do this, we need another definition. For an integer $k \geq 1$ and a graph $G$, the \textit{$k$-th power} $G^k$ of $G$ is the graph with vertex set $V(G)$ where two vertices are adjacent if and only if the distance between them in $G$ is at least $1$ and at most $k$. The analogues of the corollaries for finite graphs are all known and the proofs for the finite versions are the same as for the locally finite versions. The finite version of the following corollary is due to Matthews and Sumner \cite[Cor.\ 1]{MaSu}.

\begin{corollary}\label{Obsu-cor1}
Let $G$ be a locally finite connected graph with at least three vertices. If $G^2$ is claw-free, then $G^2$ is Hamiltonian.
\end{corollary}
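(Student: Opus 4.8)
The plan is to deduce the corollary directly from Theorem~\ref{Inf_Ob_Su} by checking that $G^2$ satisfies all of its hypotheses. Three of the four are immediate: $G^2$ has the same vertex set as $G$, hence at least three vertices; $G$ is a spanning subgraph of $G^2$, so $G^2$ is connected; and $G^2$ is claw-free by assumption. For local finiteness, note that the neighbourhood of a vertex $v$ in $G^2$ is precisely the set $N_2(v)$ computed in $G$ (in the paper's notation), and this set is finite because $v$ has only finitely many neighbours in $G$, each of which in turn has only finitely many neighbours; thus $G^2$ is locally finite.

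The only point requiring a genuine argument is that $G^2$ is locally connected. Fix a vertex $v$; its neighbourhood in $G^2$ is $N_2(v)$ (distances in $G$). First I would observe that any two vertices of $N(v)$ are at distance at most $2$ in $G$, via a path through $v$, so they are adjacent in $G^2$; hence $G^2[N(v)]$ is a complete graph, and it is nonempty since $G$ is connected on at least two vertices. Next, any vertex $w \in N_2(v)$ at distance exactly $2$ from $v$ in $G$ has a $G$-neighbour $u$ with $u \in N(v)$, and then $w$ is adjacent to $u$ in $G^2$. Therefore every vertex of $N_2(v)$ either lies in the clique $G^2[N(v)]$ or is $G^2$-adjacent to a vertex of it, and every vertex used in these adjacencies again lies in $N_2(v)$. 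Consequently, given any $w, w' \in N_2(v)$, one joins $w$ to some $u \in N(v)$, moves inside the clique $G^2[N(v)]$ from $u$ to some $u' \in N(v)$, and joins $u'$ to $w'$, producing a walk that stays inside $G^2[N_2(v)]$. Hence $G^2[N_2(v)]$ is connected, i.e.\ $G^2$ is locally connected.

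With all four hypotheses verified, Theorem~\ref{Inf_Ob_Su} applies to $G^2$ and gives that $G^2$ is Hamiltonian, which is the assertion of the corollary. I do not expect any serious obstacle: the whole argument is elementary, and the only step needing care is the local connectedness of $G^2$, which reduces entirely to the two distance observations above (that $N(v)$ becomes a clique in $G^2$, and that every vertex of $N_2(v)$ is attached to that clique inside $N_2(v)$).
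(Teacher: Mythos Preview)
Your proof is correct and follows precisely the route the paper intends: the paper does not write out a proof but remarks that the finite argument of Matthews and Sumner carries over verbatim, and that argument consists exactly of verifying that $G^2$ is locally connected and then invoking the Oberly--Sumner theorem (here, Theorem~\ref{Inf_Ob_Su}). Your verification of local connectedness via the clique $G^2[N(v)]$ and the attachment of distance-$2$ vertices to it is the standard one.
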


The following three corollaries deal with line graphs. It is well known that this class of graphs forms a subclass of all claw-free graphs. The finite versions of Corollary \ref{Obsu-cor2} and Corollary \ref{Obsu-cor3} are due to Oberly and Sumner (see \cite[Cor.\ 1 and Cor.\ 3]{ObSu}).

\begin{corollary}\label{Obsu-cor2}
Let $G$ be a locally finite connected graph with at least three edges. If its line graph $L(G)$ is locally connected, then $L(G)$ is Hamiltonian.
\end{corollary}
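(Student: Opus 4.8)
The plan is to obtain this corollary as an immediate application of Theorem~\ref{Inf_Ob_Su} to the graph $L(G)$; all the work consists in checking that $L(G)$ meets the hypotheses of that theorem, namely that it is locally finite, connected, locally connected, and claw-free, and that it has at least three vertices.

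First I would dispose of the easy points. Local connectedness of $L(G)$ is assumed. Since $G$ is simple with at least three edges, it has at least three vertices, and hence $V(L(G)) = E(G)$ has at least three elements. For connectedness, note that $G$ is connected and has an edge, so any two edges of $G$ are linked by a walk in $G$, which yields a path between the corresponding vertices of $L(G)$; thus $L(G)$ is connected. For local finiteness, a vertex of $L(G)$ is an edge $e = xy$ of $G$ whose neighbours in $L(G)$ are exactly the edges of $G$ incident with $x$ or with $y$; as $G$ is locally finite, there are only finitely many of these, so $L(G)$ is locally finite. Finally, line graphs are claw-free: if $f_1, f_2, f_3$ were pairwise non-adjacent neighbours of a vertex $e = xy$ in $L(G)$, then each $f_i$, being distinct from $e$ in a simple graph, would contain exactly one of $x$ and $y$, so by the pigeonhole principle two of them would share an endpoint, contradicting their non-adjacency; hence $L(G)$ contains no induced $K_{1,3}$.

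Having verified these, I would conclude by Theorem~\ref{Inf_Ob_Su} that $L(G)$ is Hamiltonian (the theorem covers the finite case as well, so nothing extra is needed when $G$ is finite). I do not expect any real obstacle here; the only steps deserving a sentence of justification are the standard fact that line graphs are claw-free and the observation that local finiteness of $G$ passes to $L(G)$, both of which are routine.
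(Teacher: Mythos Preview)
Your proposal is correct and follows exactly the approach the paper intends: the paper gives no explicit proof for this corollary, merely remarking beforehand that line graphs form a subclass of claw-free graphs and that the finite proofs carry over verbatim, i.e.\ one simply applies Theorem~\ref{Inf_Ob_Su} to $L(G)$. Your verification of the hypotheses (local finiteness, connectedness, claw-freeness, and $|V(L(G))|\ge 3$) is just a fleshed-out version of what the paper leaves implicit.
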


The proof of the finite version of the next corollary in \cite[Cor.\ 3]{ObSu} shows that for a graph, the property of being locally connected is preserved under taking the line graph.

\begin{corollary}\label{Obsu-cor3}
For every locally finite, connected, locally connected graph with at least three vertices, its line graph is Hamiltonian.
\end{corollary}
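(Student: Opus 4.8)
The plan is to check that $L(G)$ satisfies every hypothesis of Theorem~\ref{Inf_Ob_Su} and then invoke that theorem. So let $G$ be a locally finite, connected, locally connected graph on at least three vertices. By Proposition~\ref{2-conn}, $G$ is $2$-connected, so every vertex of $G$ has degree at least two; since $|V(G)|\geq 3$ this forces $|E(G)|\geq 3$, hence $L(G)$ has at least three vertices. Moreover $L(G)$ is connected because $G$ is connected and has an edge (any two edges are joined by walking along a path in $G$ between their endpoints), and $L(G)$ is locally finite because a vertex $e=uv$ of $L(G)$ has at most $\deg_G(u)+\deg_G(v)-2$ neighbours in $L(G)$, which is finite. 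That $L(G)$ is claw-free is the standard fact about line graphs: if an edge $e=uv$ of $G$ were the centre of an induced $K_{1,3}$ in $L(G)$, then by the pigeonhole principle two of its three leaves would share an endpoint of $e$ and hence be adjacent in $L(G)$, contradicting that the claw is induced.

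The only point requiring an argument is that $L(G)$ is locally connected, and here I would reproduce the reasoning behind the finite version in \cite[Cor.\ 3]{ObSu}. Fix a vertex $e=uv$ of $L(G)$. Its neighbourhood in $L(G)$ is the disjoint union of the set $A$ of edges of $G$ incident with $u$ other than $e$ and the set $B$ of edges of $G$ incident with $v$ other than $e$, where disjointness uses that $G$ is simple. Both $A$ and $B$ induce complete subgraphs of $L(G)$, since any two edges of $G$ with a common endpoint are adjacent in $L(G)$. So it suffices to exhibit one edge of $L(G)$ between $A$ and $B$. Because $G$ is locally connected, $G[N_G(u)]$ is connected, and as $\deg_G(u)\geq 2$ it has at least two vertices; since $v\in N_G(u)$, the vertex $v$ has a neighbour $w$ in $G[N_G(u)]$, i.e.\ $w$ is a common neighbour of $u$ and $v$ in $G$ with $w\notin\lbrace u,v\rbrace$. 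Then $uw\in A$ and $vw\in B$ share the endpoint $w$, so they are adjacent in $L(G)$. Hence $L(G)[N_{L(G)}(e)]=L(G)[A\cup B]$ is connected, and since $e$ was arbitrary, $L(G)$ is locally connected.

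With all hypotheses verified, Theorem~\ref{Inf_Ob_Su} applies to $L(G)$ and gives that $L(G)$ is Hamiltonian, which is exactly the assertion of the corollary. The main (essentially the only) piece of genuine content is the preservation of local connectedness under the line-graph operation in the second paragraph; the remaining steps are a routine transfer of standard line-graph facts to the locally finite setting, with local finiteness of $L(G)$ following immediately from that of $G$.
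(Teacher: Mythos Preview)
Your proof is correct and follows essentially the same approach the paper indicates: the paper does not spell out a proof but remarks that the argument from \cite[Cor.~3]{ObSu} shows local connectedness is preserved under taking line graphs, after which Theorem~\ref{Inf_Ob_Su} (equivalently Corollary~\ref{Obsu-cor2}) applies. You have filled in exactly those details, including the routine checks that $L(G)$ is locally finite, connected, claw-free and has at least three vertices.
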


The finite analogue of the next corollary has appeared in a paper of Nebesky (see \cite[Thm.\ 1]{Nebesky}). In \cite[Cor.\ 5]{ObSu} a proof of the finite version of the next corollary can be found using Theorem~\ref{Ob_Su}, the finite version of Theorem~\ref{Inf_Ob_Su}.

\begin{corollary}\label{Obsu-cor4}
Let $G$ be a locally finite connected graph with at least three vertices. Then $L(G^2)$ is Hamiltonian.
\end{corollary}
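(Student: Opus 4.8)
The plan is to deduce the statement from Corollary~\ref{Obsu-cor3} applied to the graph $G^2$. Concretely, I would show that $G^2$ is a locally finite, connected, locally connected graph on at least three vertices; Corollary~\ref{Obsu-cor3} then immediately yields that $L(G^2)$ is Hamiltonian. Three of these four properties are essentially immediate: $G^2$ has the same vertex set as $G$, so it has at least three vertices; the neighbourhood of a vertex $v$ in $G^2$ is exactly $N_2(v)$, which is finite because $G$ is locally finite, so $G^2$ is locally finite; and $d_{G^2}(u,v)\le d_G(u,v)$ for all vertices $u,v$, so $G^2$ inherits connectedness from $G$.

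The one point that requires an argument is that $G^2$ is locally connected. Fix a vertex $v$ of $G$; since $G$ is connected and has at least three vertices, $N_G(v)\neq\emptyset$. First, $N_G(v)$ induces a complete subgraph of $G^2$, because any two neighbours of $v$ in $G$ have $G$-distance at most $2$. Second, for every vertex $u$ with $d_G(u,v)=2$ there is a vertex $w\in N_G(v)$ with $uw\in E(G)\subseteq E(G^2)$, so $u$ is adjacent in $G^2$ to a vertex of $N_G(v)$. Since every vertex of $N_{G^2}(v)=N_2(v)$ either lies in the nonempty clique $N_G(v)$ or is adjacent in $G^2$ to such a vertex, the graph $G^2[N_{G^2}(v)]$ is connected. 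Hence $G^2$ is locally connected.

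Finally, I would record that the hypotheses are non-degenerate: since $G$ is connected on at least three vertices it contains a path on three vertices, whose endvertices are adjacent in $G^2$, so $G^2$ contains a triangle, hence at least three edges, and thus $L(G^2)$ has at least three vertices as required. With all hypotheses verified, Corollary~\ref{Obsu-cor3} gives that $L(G^2)$ is Hamiltonian, completing the proof. Alternatively, one could apply Theorem~\ref{Inf_Ob_Su} to $L(G^2)$ directly, using that line graphs are claw-free, that passing to the line graph preserves local finiteness and local connectedness, and the connectedness and three-vertex observations just made. I do not expect a genuine obstacle here; the only mildly non-routine step is the verification that $G^2$ is locally connected, and even that is a short argument.
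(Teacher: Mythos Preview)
Your proposal is correct and matches the paper's implicit approach: the paper gives no explicit proof for this corollary but remarks that the finite-case proofs carry over verbatim, and the cited finite proof in \cite[Cor.\ 5]{ObSu} proceeds exactly by showing that $G^2$ is locally connected and then applying the line-graph corollary (your Corollary~\ref{Obsu-cor3}). Your verification that $G^2$ is locally connected is the standard one and is complete as written.
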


The finite version of the next corollary is due to Chartrand and Wall (see \cite{CharWall}). A proof of the finite result using Theorem~\ref{Ob_Su} can be found in \cite[Cor.\ 4]{ObSu}.

\begin{corollary}\label{Obsu-cor5}
Let $G$ be a locally finite connected graph with $\delta(G) \geq 3$. Then $L(L(G))$ is Hamiltonian.
\end{corollary}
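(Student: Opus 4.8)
The plan is to deduce the statement from Theorem~\ref{Inf_Ob_Su} applied to the graph $L(L(G))$, exactly as in the finite case treated in \cite[Cor.\ 4]{ObSu}. The graph $L(L(G))$ is a line graph, hence claw-free; and the line graph of a locally finite connected graph is again locally finite and connected, so $L(L(G))$ is locally finite and connected. Since $\delta(G)\geq 3$ and $G$ is connected, $G$ has at least four vertices, hence at least six edges, so $L(G)$ has at least six vertices and $L(L(G))$ has at least three vertices. Therefore everything needed for Theorem~\ref{Inf_Ob_Su} is in place except local connectedness, and the whole content of the proof is to verify that $L(L(G))$ is locally connected. (Alternatively one may quote Corollary~\ref{Obsu-cor2} with $L(G)$ in the role of $G$, which already incorporates the claw-freeness; the work is the same either way.)

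To check local connectedness I would identify each vertex of $L(L(G))$ with an unordered pair $\lbrace e,f \rbrace$ of distinct adjacent edges of $G$ --- equivalently, with a path $xyz$ of length two in $G$, where $e=xy$ and $f=yz$ --- and note that two such vertices are adjacent in $L(L(G))$ precisely when the corresponding edge-pairs have an edge of $G$ in common. Fixing a vertex $\lbrace e,f \rbrace$ with $e=xy$, $f=yz$, its neighbourhood in $L(L(G))$ is the union of the set $A$ of all $\lbrace e,g \rbrace$ with $g$ an edge of $G$ adjacent to $e$ and $g\neq f$, and the set $B$ of all $\lbrace f,h \rbrace$ with $h$ an edge of $G$ adjacent to $f$ and $h\neq e$. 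The sets $A$ and $B$ are disjoint, and each induces a clique in $L(L(G))$, because any two members of $A$ share the vertex $e$ of $L(G)$ and any two members of $B$ share the vertex $f$ of $L(G)$.

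The hypothesis $\delta(G)\geq 3$ enters exactly here, and this is the one step that requires an argument. First, $\deg_{L(G)}(e)=(\deg_G(x)-1)+(\deg_G(y)-1)\geq 4$, and likewise for $f$, so $A$ and $B$ are both nonempty. Second --- the key point --- since $\deg_G(y)\geq 3$ there is an edge $g$ of $G$ incident with $y$ with $g\neq e$ and $g\neq f$; then $\lbrace e,g \rbrace\in A$ and $\lbrace f,g \rbrace\in B$, and these two vertices are adjacent in $L(L(G))$ since they share the vertex $g$ of $L(G)$. Hence the neighbourhood $A\cup B$ is the union of two nonempty cliques joined by an edge, so it is connected, and $L(L(G))$ is locally connected. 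I expect this bridging step to be the only real obstacle: one must exhibit, for every length-two path $xyz$ in $G$, a single edge of $G$ that links the two cliques making up the neighbourhood, and choosing a third edge at the centre $y$ (available precisely because $\delta(G)\geq 3$) does this uniformly, with no case analysis. With local connectedness established, Theorem~\ref{Inf_Ob_Su} applies to $L(L(G))$ and produces a Hamilton circle, which is what we wanted.
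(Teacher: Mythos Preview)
Your proposal is correct and follows exactly the route the paper intends: the paper gives no explicit proof but refers to \cite[Cor.\ 4]{ObSu}, remarking that the finite proofs carry over verbatim with Theorem~\ref{Inf_Ob_Su} in place of Theorem~\ref{Ob_Su}. Your verification that $L(L(G))$ is locally connected via the two-clique decomposition of each neighbourhood, bridged by a third edge at the central vertex $y$ (using $\delta(G)\geq 3$), is precisely the Oberly--Sumner argument.
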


The following corollary involves another class of graphs. We call a graph \textit{chordal} if it has no induced cycle with more than three vertices. Balakrishnan and \linebreak Paulraja \cite[Thm.\ 5]{BalaPaul} proved the finite analogue of the following corollary. They showed first that a graph which is $2$-connected and chordal has also the property of being locally connected. Then they applied Theorem~\ref{Ob_Su}.

\begin{corollary}\label{Obsu-cor6}
Every locally finite, $2$-connected, chordal, claw-free graph is \linebreak Hamiltonian.
\end{corollary}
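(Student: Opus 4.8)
The plan is to reduce Corollary~\ref{Obsu-cor6} to Theorem~\ref{Inf_Ob_Su}. The only hypothesis of that theorem not explicitly present here is local connectedness, so the whole task is to show that a $2$-connected chordal graph is automatically locally connected. This is the route taken by Balakrishnan and Paulraja in the finite case, and I expect the argument to carry over verbatim, since it uses neither finiteness nor local finiteness nor claw-freeness.

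First I would prove the auxiliary statement: if $G$ is $2$-connected and chordal, then $G$ is locally connected. Fix a vertex $v$ and suppose for contradiction that $G[N(v)]$ is disconnected; let $A$ be one component of $G[N(v)]$ and set $B = N(v) \setminus A$, so $N(v) = A \cup B$ with $A, B \neq \emptyset$. Since $G$ is $2$-connected, $G-v$ is connected, so there is a path in $G-v$ from a vertex of $A$ to a vertex of $B$; let $P = a_0 a_1 \cdots a_\ell$ be a shortest such path, with $a_0 \in A$ and $a_\ell \in B$. Minimality forces that no internal vertex $a_i$ with $1 \leq i \leq \ell-1$ lies in $N(v) = A \cup B$ (otherwise $P$ could be shortened), and also forces $\ell \geq 2$: if $\ell = 1$ then $a_0 a_\ell$ would be an edge with $a_0 \in A$ and $a_\ell \in N(v)$, so $a_\ell$ would lie in the component $A$, contradicting $a_\ell \in B$.

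Next I would use chordality. Consider the cycle $D = v a_0 a_1 \cdots a_\ell v$, which has $\ell + 2 \geq 4$ vertices and therefore, since $G$ is chordal, has a chord $e$. If $e$ is incident with $v$, then $e = v a_i$ for some $i$ with $1 \leq i \leq \ell - 1$ (as $v a_0$ and $v a_\ell$ are edges of $D$), contradicting $a_i \notin N(v)$. Otherwise $e = a_i a_j$ with $j \geq i+2$; concatenating $a_0 \cdots a_i$, the edge $a_i a_j$, and $a_j \cdots a_\ell$ gives an $a_0$--$a_\ell$ walk in $G-v$ of length at most $\ell - 1$, hence an $A$--$B$ path in $G-v$ shorter than $P$, a contradiction. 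So $D$ has no chord, which is impossible. Hence $G[N(v)]$ is connected, and since $v$ was arbitrary, $G$ is locally connected.

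Finally I would assemble the pieces: let $G$ be locally finite, $2$-connected, chordal and claw-free. Being $2$-connected, $G$ is connected and has at least three vertices; by the auxiliary statement $G$ is locally connected; and $G$ is locally finite and claw-free by hypothesis. Theorem~\ref{Inf_Ob_Su} then yields that $G$ is Hamiltonian. The proof is essentially bookkeeping once the auxiliary statement is in hand; the only step that needs a little care is the chord analysis on the cycle $D$, in particular absorbing the degenerate cases $\ell = 1$ and a chord of the form $a_0 a_\ell$, which is precisely what the minimality and the length bound above take care of.
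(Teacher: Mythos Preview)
Your proposal is correct and follows exactly the route the paper indicates: show that $2$-connected together with chordal implies locally connected (the paper attributes this to Balakrishnan and Paulraja and does not spell out the argument), then apply Theorem~\ref{Inf_Ob_Su}. Your explicit shortest-path-plus-chord argument for the auxiliary statement is sound and, as you note, uses neither finiteness nor claw-freeness.
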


\end{document}